\newtheorem{theorem}{Theorem}[section]
\newtheorem{prop}[theorem]{Proposition}
\theoremstyle{definition}
\newtheorem{defn}[theorem]{Definition}
\newtheorem{lemma}[theorem]{Lemma}
\newtheorem{coro}[theorem]{Corollary}
\newtheorem{prop-def}{Proposition-Definition}[section]
\newtheorem{coro-def}{Corollary-Definition}[section]
\newtheorem{remark}[theorem]{Remark}
\newtheorem{exam}[theorem]{Example}
\newcommand{\nc}{\newcommand}
\nc{\tred}[1]{\textcolor{red}{#1}}
\nc{\tblue}[1]{\textcolor{blue}{#1}}
\nc{\tgreen}[1]{\textcolor{green}{#1}}
\nc{\tpurple}[1]{\textcolor{purple}{#1}}
\nc{\btred}[1]{\textcolor{red}{\bf #1}}
\nc{\btblue}[1]{\textcolor{blue}{\bf #1}}
\nc{\btgreen}[1]{\textcolor{green}{\bf #1}}
\nc{\btpurple}[1]{\textcolor{purple}{\bf #1}}
\nc{\NN}{{\mathbb N}}
\nc{\ncsha}{{\mbox{\cyr X}^{\mathrm NC}}} \nc{\ncshao}{{\mbox{\cyr
X}^{\mathrm NC}_0}}
\newcommand{\efootnote}[1]{}
\renewcommand{\textbf}[1]{}
\newcommand{\delete}[1]{}
\nc{\mlabel}[1]{\label{#1}}  
\nc{\mcite}[1]{\cite{#1}}  
\nc{\mref}[1]{\ref{#1}}  
\nc{\mbibitem}[1]{\bibitem{#1}} 
\nc{\mlabel}[1]{\label{#1}{\hfill \hspace{1cm}{\bf{{\ }\hfill(#1)}}}}
\nc{\mcite}[1]{\cite{#1}{{\bf{{\ }(#1)}}}}  
\nc{\mref}[1]{\ref{#1}{{\bf{{\ }(#1)}}}}  
\nc{\mbibitem}[1]{\bibitem[\bf #1]{#1}} 
\nc{\opa}{\ast} \nc{\opb}{\odot} \nc{\op}{\bullet} \nc{\pa}{\frakL}
\nc{\arr}{\rightarrow} \nc{\lu}[1]{(#1)} \nc{\mult}{\mrm{mult}}
\nc{\diff}{\mathfrak{Diff}}
\nc{\opc}{\sharp}\nc{\opd}{\natural}
\nc{\ope}{\circ}
\nc{\dpt}{\mathrm{d}}
\nc{\hck}{H_{RT}}
\nc{\vdf}{\calf}
\nc{\ldf}{\calf_\ell}
\nc{\hlf}{H_\ell}
\nc{\onek}{\mathbf{1}_\bfk}
\nc{\diam}{alternating\xspace}
\nc{\Diam}{Alternating\xspace}
\nc{\cdiam}{canonical alternating\xspace}
\nc{\Cdiam}{Canonical alternating\xspace}
\nc{\AW}{\mathcal{A}}
\nc{\ari}{\mathrm{ar}}
\nc{\lef}{\mathrm{lef}}
\nc{\Sh}{\mathrm{ST}}
\nc{\Cr}{\mathrm{Cr}}
\nc{\st}{{Schr\"oder tree}\xspace}
\nc{\sts}{{Schr\"oder trees}\xspace}
\nc{\vertset}{\Omega} 
\nc{\assop}{\quad \begin{picture}(5,5)(0,0)
\line(-1,1){10}
\put(-2.2,-2.2){$\bullet$}
\line(0,-1){10}\line(1,1){10}
\end{picture} \quad \smallskip}
\nc{\operator}{\begin{picture}(5,5)(0,0)
\line(0,-1){6}
\put(-2.6,-1.8){$\bullet$}
\line(0,1){9}
\end{picture}}
\nc{\idx}{\begin{picture}(6,6)(-3,-3)
\put(0,0){\line(0,1){6}}
\put(0,0){\line(0,-1){6}}
\end{picture}}
\nc{\pb}{{\mathrm{pb}}}
\nc{\Lf}{{\mathrm{Lf}}}
\nc{\lft}{{left tree}\xspace}
\nc{\lfts}{{left trees}\xspace}
\nc{\fat}{{fundamental averaging tree}\xspace}
\nc{\fats}{{fundamental averaging trees}\xspace}
\nc{\avt}{\mathrm{Avt}}
\nc{\rass}{{\mathit{RAss}}}
\nc{\aass}{{\mathit{AAss}}}
\nc{\vin}{{\mathrm Vin}}    
\nc{\lin}{{\mathrm Lin}}    
\nc{\inv}{\mathrm{I}n}
\nc{\gensp}{V} 
\nc{\genbas}{\mathcal{V}} 
\nc{\bvp}{V_P}     
\nc{\gop}{{\,\omega\,}}     
\nc{\bin}[2]{ (_{\stackrel{\scs{#1}}{\scs{#2}}})}  
\nc{\binc}[2]{ \left (\!\! \begin{array}{c} \scs{#1}\\
    \scs{#2} \end{array}\!\! \right )}  
\nc{\bincc}[2]{  \left ( {\scs{#1} \atop
    \vspace{-1cm}\scs{#2}} \right )}  
\nc{\bs}{\bar{S}} \nc{\cosum}{\sqsubset} \nc{\la}{\longrightarrow}
\nc{\rar}{\rightarrow} \nc{\dar}{\downarrow} \nc{\dprod}{**}
\nc{\dap}[1]{\downarrow \rlap{$\scriptstyle{#1}$}}
\nc{\md}{\mathrm{dth}} \nc{\uap}[1]{\uparrow
\rlap{$\scriptstyle{#1}$}} \nc{\defeq}{\stackrel{\rm def}{=}}
\nc{\disp}[1]{\displaystyle{#1}} \nc{\dotcup}{\
\displaystyle{\bigcup^\bullet}\ } \nc{\gzeta}{\bar{\zeta}}
\nc{\hcm}{\ \hat{,}\ } \nc{\hts}{\hat{\otimes}}
\nc{\barot}{{\otimes}} \nc{\free}[1]{\bar{#1}}
\nc{\uni}[1]{\tilde{#1}} \nc{\hcirc}{\hat{\circ}} \nc{\lleft}{[}
\nc{\lright}{]} \nc{\lc}{\lfloor} \nc{\rc}{\rfloor}
\nc{\curlyl}{\left \{ \begin{array}{c} {} \\ {} \end{array}
    \right .  \!\!\!\!\!\!\!}
\nc{\curlyr}{ \!\!\!\!\!\!\!
    \left . \begin{array}{c} {} \\ {} \end{array}
    \right \} }
\nc{\longmid}{\left | \begin{array}{c} {} \\ {} \end{array}
    \right . \!\!\!\!\!\!\!}
\nc{\onetree}{\bullet} \nc{\ora}[1]{\stackrel{#1}{\rar}}
\nc{\ola}[1]{\stackrel{#1}{\la}}
\nc{\ot}{\otimes} \nc{\mot}{{{\boxtimes\,}}}
\nc{\otm}{\overline{\boxtimes}} \nc{\sprod}{\bullet}
\nc{\scs}[1]{\scriptstyle{#1}} \nc{\mrm}[1]{{\rm #1}}
\nc{\margin}[1]{\marginpar{\rm #1}}   
\nc{\dirlim}{\displaystyle{\lim_{\longrightarrow}}\,}
\nc{\invlim}{\displaystyle{\lim_{\longleftarrow}}\,}
\nc{\mvp}{\vspace{0.3cm}} \nc{\tk}{^{(k)}} \nc{\tp}{^\prime}
\nc{\ttp}{^{\prime\prime}} \nc{\svp}{\vspace{2cm}}
\nc{\vp}{\vspace{8cm}} \nc{\proofbegin}{\noindent{\bf Proof: }}
\nc{\proofend}{$\blacksquare$ \vspace{0.3cm}}
\nc{\modg}[1]{\!<\!\!{#1}\!\!>}
\nc{\intg}[1]{F_C(#1)} \nc{\lmodg}{\!
<\!\!} \nc{\rmodg}{\!\!>\!}
\nc{\cpi}{\widehat{\Pi}}
\nc{\sha}{{\mbox{\cyr X}}}  
\nc{\shap}{{\mbox{\cyrs X}}} 
\nc{\shpr}{\diamond}    
\nc{\shp}{\ast} \nc{\shplus}{\shpr^+}
\nc{\shprc}{\shpr_c}    
\nc{\msh}{\ast} \nc{\zprod}{m_0} \nc{\oprod}{m_1}
\nc{\vep}{\epsilon} \nc{\labs}{\mid\!} \nc{\rabs}{\!\mid}
\nc{\sqmon}[1]{\langle #1\rangle}
\nc{\mmbox}[1]{\mbox{\ #1\ }} \nc{\dep}{\mrm{dep}} \nc{\fp}{\mrm{FP}}
\nc{\rchar}{\mrm{char}} \nc{\End}{\mrm{End}} \nc{\Fil}{\mrm{Fil}}
\nc{\Mor}{Mor\xspace} \nc{\gmzvs}{gMZV\xspace}
\nc{\gmzv}{gMZV\xspace} \nc{\mzv}{MZV\xspace}
\nc{\mzvs}{MZVs\xspace} \nc{\Hom}{\mrm{Hom}} \nc{\id}{\mrm{id}}
\nc{\im}{\mrm{im}} \nc{\incl}{\mrm{incl}} \nc{\map}{\mrm{Map}}
\nc{\mchar}{\rm char} \nc{\nz}{\rm NZ} \nc{\supp}{\mathrm Supp}
\nc{\Alg}{\mathbf{Alg}} \nc{\Bax}{\mathbf{Bax}} \nc{\bff}{\mathbf f}
\nc{\bfk}{{\bf k}} \nc{\bfone}{{\bf 1}} \nc{\bfx}{\mathbf x}
\nc{\bfy}{\mathbf y}
\nc{\base}[1]{\bfone^{\otimes ({#1}+1)}} 
\nc{\Cat}{\mathbf{Cat}}
\nc{\detail}{\marginpar{\bf More detail}
    \noindent{\bf Need more detail!}
    \svp}
\nc{\Int}{\mathbf{Int}} \nc{\Mon}{\mathbf{Mon}}
\nc{\rbtm}{{shuffle }} \nc{\rbto}{{Rota-Baxter }}
\nc{\remarks}{\noindent{\bf Remarks: }} \nc{\Rings}{\mathbf{Rings}}
\nc{\Sets}{\mathbf{Sets}} \nc{\wtot}{\widetilde{\odot}}
\nc{\wast}{\widetilde{\ast}} \nc{\bodot}{\bar{\odot}}
\nc{\bast}{\bar{\ast}} \nc{\hodot}[1]{\odot^{#1}}
\nc{\hast}[1]{\ast^{#1}} \nc{\mal}{\mathcal{O}}
\nc{\tet}{\tilde{\ast}} \nc{\teot}{\tilde{\odot}}
\nc{\oex}{\overline{x}} \nc{\oey}{\overline{y}}
\nc{\oez}{\overline{z}} \nc{\oef}{\overline{f}}
\nc{\oea}{\overline{a}} \nc{\oeb}{\overline{b}}
\nc{\weast}[1]{\widetilde{\ast}^{#1}}
\nc{\weodot}[1]{\widetilde{\odot}^{#1}} \nc{\hstar}[1]{\star^{#1}}
\nc{\lae}{\langle} \nc{\rae}{\rangle}
\nc{\lf}{\lfloor}
\nc{\rf}{\rfloor}
\nc{\QQ}{{\mathbb Q}}
\nc{\RR}{{\mathbb R}} \nc{\ZZ}{{\mathbb Z}}
\nc{\cala}{{\mathcal A}} \nc{\calb}{{\mathcal B}}
\nc{\calc}{{\mathcal C}}
\nc{\cald}{{\mathcal D}} \nc{\cale}{{\mathcal E}}
\nc{\calf}{{\mathcal F}} \nc{\calg}{{\mathcal G}}
\nc{\calh}{{\mathcal H}} \nc{\cali}{{\mathcal I}}
\nc{\call}{{\mathcal L}} \nc{\calm}{{\mathcal M}}
\nc{\caln}{{\mathcal N}} \nc{\calo}{{\mathcal O}}
\nc{\calp}{{\mathcal P}} \nc{\calr}{{\mathcal R}}
\nc{\cals}{{\mathcal S}} \nc{\calt}{{\mathcal T}}
\nc{\calu}{{\mathcal U}} \nc{\calw}{{\mathcal W}} \nc{\calk}{{\mathcal K}}
\nc{\calx}{{\mathcal X}} \nc{\CA}{\mathcal{A}}
\nc{\fraka}{{\mathfrak a}} \nc{\frakA}{{\mathfrak A}}
\nc{\frakb}{{\mathfrak b}} \nc{\frakB}{{\mathfrak B}}
\nc{\frakD}{{\mathfrak D}} \nc{\frakF}{\mathfrak{F}}
\nc{\frakf}{{\mathfrak f}} \nc{\frakg}{{\mathfrak g}}
\nc{\frakH}{{\mathfrak H}} \nc{\frakL}{{\mathfrak L}}
\nc{\frakM}{{\mathfrak M}} \nc{\bfrakM}{\overline{\frakM}}
\nc{\frakm}{{\mathfrak m}} \nc{\frakP}{{\mathfrak P}}
\nc{\frakN}{{\mathfrak N}} \nc{\frakp}{{\mathfrak p}}
\nc{\frakS}{{\mathfrak S}} \nc{\frakT}{\mathfrak{T}}
\nc{\frakX}{{\mathfrak X}}
\nc{\BS}{\mathbb{S
}}
\font\cyr=wncyr10 \font\cyrs=wncyr7
\nc{\li}[1]{\textcolor{red}{Li:#1}}
\nc{\yi}[1]{\textcolor{blue}{Yi: #1}}
\nc{\xing}[1]{\textcolor{purple}{Xing:#1}}
\nc{\revise}[1]{\textcolor{red}{#1}}
\nc{\ID}{{\rm I}}\nc{\lbar}[1]{\overline{#1}}\nc{\bre}{{\rm bre}}
\nc{\sd}{\cals}\nc{\rb}{\rm RB}\nc{\A}{\rm A}\nc{\LL}{\rm L}\nc{\tx}{\tilde{X}}
\nc{\col}{\Delta_{\epsilon}}\nc{\mul}{m_{RT}}\nc{\ul}{u_{RT}}\nc{\epl}{\epsilon_{RT}}
\nc{\hl}{H_{RT}}\nc{\arro}[1]{#1}\nc{\px}{P_{\tx}}\nc{\pw}{P_{\mathfrak{w}}}\nc{\pl}{B^+}
\nc{\pp}{\pl}\nc{\ppp}[1]{B^+(#1)}\nc{\dw}{\diamond_{\mathfrak{w}}}\nc{\dl}{\diamond_{\rm \ell}}
\nc{\ncshaw}{\sha^{{\rm NC}}_{\mathfrak{w}}}\nc{\ncshal}{\sha^{{\rm NC}}_{{\rm \ell}}}
\nc{\ver}{\rm V}\nc{\ld}{l}\nc{\del}{\Delta_{{\rm \ell}}}\nc{\epsl}{\epsilon_{{\rm \ell}}}
\nc{\uul}{u_{{\rm \ell}}}\nc{\oneh}{\mathbf{1}}\nc{\onew}{\mathbf{1}}
\nc{\etree}{\mathbbm{1}} \nc{\conc}{m_{RT}} \nc{\subq}{\bfk Q_l} \nc{\fid}{\unlhd}  \nc{\sfid}{\lhd}
\nc{\leql}{\leq_{\text{l}}} \nc{\leqh}{\leq_{\text{h}}} \nc{\lhl}{\leq_{\text{h,l}}} \nc{\ghl}{\geq_{\text{h,l}}}
\nc{\RT}{\mathrm{RT}}
\nc{\hrtb}{H_{\RT}(X\sqcup\Omega)} \nc{\hrts}{H_{\RT}(X, \Omega)}\nc{\rts}{\mathcal{T}(X, \Omega)}\nc{\rfs}{\mathcal{F}(X, \Omega)} \nc{\cm}{\cdot_{\varepsilon}}
\newcommand{\tun}{\begin{picture}(5,0)(-2,-1)
\put(0,0){\circle*{2}}
\end{picture}}
\newcommand{\tdeux}{\begin{picture}(7,7)(0,-1)
\put(3,0){\circle*{2}}
\put(3,0){\line(0,1){5}}
\put(3,5){\circle*{2}}
\end{picture}}
\newcommand{\ttroisun}{\begin{picture}(15,8)(-5,-1)
\put(3,0){\circle*{2}}
\put(-0.65,0){$\vee$}
\put(6,7){\circle*{2}}
\put(0,7){\circle*{2}}
\end{picture}}
\newcommand{\ttroisdeux}{\begin{picture}(5,12)(-2,-1)
\put(0,0){\circle*{2}}
\put(0,0){\line(0,1){5}}
\put(0,5){\circle*{2}}
\put(0,5){\line(0,1){5}}
\put(0,10){\circle*{2}}
\end{picture}}
\newcommand{\tquatreun}{\begin{picture}(15,12)(-5,-1)
\put(3,0){\circle*{2}}
\put(-0.65,0){$\vee$}
\put(6,7){\circle*{2}}
\put(0,7){\circle*{2}}
\put(3,7){\circle*{2}}
\put(3,0){\line(0,1){7}}
\end{picture}}
\newcommand{\tquatredeux}{\begin{picture}(15,18)(-5,-1)
\put(3,0){\circle*{2}}
\put(-0.65,0){$\vee$}
\put(6,7){\circle*{2}}
\put(0,7){\circle*{2}}
\put(0,14){\circle*{2}}
\put(0,7){\line(0,1){7}}
\end{picture}}
\newcommand{\tquatretrois}{\begin{picture}(15,18)(-5,-1)
\put(3,0){\circle*{2}}
\put(-0.65,0){$\vee$}
\put(6,7){\circle*{2}}
\put(0,7){\circle*{2}}
\put(6,14){\circle*{2}}
\put(6,7){\line(0,1){7}}
\end{picture}}
\newcommand{\tquatrequatre}{\begin{picture}(15,18)(-5,-1)
\put(3,5){\circle*{2}}
\put(-0.65,5){$\vee$}
\put(6,12){\circle*{2}}
\put(0,12){\circle*{2}}
\put(3,0){\circle*{2}}
\put(3,0){\line(0,1){5}}
\end{picture}}
\newcommand{\tquatrecinq}{\begin{picture}(9,19)(-2,-1)
\put(0,0){\circle*{2}}
\put(0,0){\line(0,1){5}}
\put(0,5){\circle*{2}}
\put(0,5){\line(0,1){5}}
\put(0,10){\circle*{2}}
\put(0,10){\line(0,1){5}}
\put(0,15){\circle*{2}}
\end{picture}}
\newcommand{\tcinqdeux}{\begin{picture}(15,14)(-5,-1)
\put(3,0){\circle*{2}}
\put(-0.65,0){$\vee$}
\put(6,7){\circle*{2}}
\put(0,7){\circle*{2}}
\put(3,7){\circle*{2}}
\put(3,0){\line(0,1){7}}
\put(0,7){\line(0,1){7}}
\put(0,14){\circle*{2}}
\end{picture}}
\newcommand{\tdun}[1]
{\begin{picture}(10,5)(-2,-1)
\put(0,0){\circle*{2}}
\put(3,-2){\tiny #1}
\end{picture}}
\newcommand{\tddeux}[2]{\begin{picture}(12,5)(0,-1)
\put(3,0){\circle*{2}}
\put(3,0){\line(0,1){5}}
\put(3,5){\circle*{2}}
\put(6,-3){\tiny #1}
\put(6,3){\tiny #2}
\end{picture}}
\newcommand{\tdtroisun}[3]{\begin{picture}(20,12)(-5,-1)
\put(3,0){\circle*{2}}
\put(-0.65,0){$\vee$}
\put(6,7){\circle*{2}}
\put(0,7){\circle*{2}}
\put(5,-2){\tiny #1}
\put(8,5){\tiny #2}
\put(-6,5){\tiny #3}
\end{picture}}
\newcommand{\tdtroisdeux}[3]{\begin{picture}(12,12)(-2,-1)
\put(0,0){\circle*{2}}
\put(0,0){\line(0,1){5}}
\put(0,5){\circle*{2}}
\put(0,5){\line(0,1){5}}
\put(0,10){\circle*{2}}
\put(3,-2){\tiny #1}
\put(3,3){\tiny #2}
\put(3,9){\tiny #3}
\end{picture}}
\newcommand{\tdquatredeux}[4]{\begin{picture}(20,20)(-5,-1)
\put(3,0){\circle*{2}}
\put(-.65,0){$\vee$}
\put(6,7){\circle*{2}}
\put(0,7){\circle*{2}}
\put(0,14){\circle*{2}}
\put(0,7){\line(0,1){7}}
\put(5,-2){\tiny #1}
\put(9,5){\tiny #2}
\put(-6,5){\tiny #3}
\put(-6,12){\tiny #4}
\end{picture}}
\newcommand{\tdquatretrois}[4]{\begin{picture}(20,20)(-5,-1)
\put(3,0){\circle*{2}}
\put(-.65,0){$\vee$}
\put(6,7){\circle*{2}}
\put(0,7){\circle*{2}}
\put(6,14){\circle*{2}}
\put(6,7){\line(0,1){7}}
\put(5,-2){\tiny #1}
\put(8,5){\tiny #2}
\put(-6,5){\tiny #4}
\put(8,12){\tiny #3}
\end{picture}}
\begin{document}

\title[Weighted infinitesimal bialgebras]{Weighted infinitesimal bialgebras
}
%
\author{Yi Zhang}
\address{School of Mathematics and Statistics,
Nanjing University of Information Science \& Technology, Nanjing, Jiangsu 210044, P.\,R. China}
\email{zhangy2016@nuist.edu.cn}

\author{Xing Gao}
\address{School of Mathematics and Statistics, Key Laboratory of Applied Mathematics and Complex Systems, Lanzhou University, Lanzhou, Gansu 730000, P.\,R. China}
         \email{gaoxing@lzu.edu.cn}

\date{\today}
\begin{abstract}
As an algebraic meaning of the nonhomogenous associative Yang-Baxter equation,
weighted infinitesimal bialgebras play an important role in mathematics and mathematical physics.
In this paper, we introduce the concept of weighted infinitesimal Hopf modules and show that any module carries a natural structure of weighted infinitesimal unitary Hopf module over a weighted quasitriangular infinitesimal unitary bialgebra.
We decorate planar rooted forests  in a new way, and prove that the space of rooted forests, together with a coproduct  and a family of grafting operations, is the free $\Omega$-cocycle infinitesimal unitary bialgebra  of weight zero on a set.
A combinatorial description of the coproduct is given.
As applications, we obtain the initial object in the category of cocycle infinitesimal unitary bialgebras   on undecorated planar rooted forests, which is the object studied in the (noncommutative) Connes-Kreimer Hopf algebra. Finally, we derive two pre-Lie algebras from an arbitrary weighted infinitesimal bialgebra and weighted commutative infinitesimal bialgebra, respectively. The second construction  generalizes the  Gel'fand-Dorfman Theorem on Novikov algebras.
\end{abstract}

\subjclass[2010]{
16W99, 
05C05, 
16T10, 
16T30,  
17B60,  
81R10,  
}

\keywords{Rooted forest; Infinitesimal bialgebra; Operated algebra; Pre-Lie algebra}

\maketitle

\tableofcontents

\setcounter{section}{0}

\allowdisplaybreaks

\section{Introduction}

\footnotetext{1. This paper was started a few years ago, some other papers motivated by this paper have appeared~\cite{ZCGL18, ZGL19, ZGZ18, ZZL18}. These and other recent developments motivated  us to complete the paper properly. It should be pointed out that Refs. ~\cite{ZCGL18,ZGL19} are two different constructions and they can't cover the results proposed in Section~\ref{sec:infbi} of this paper. }

\subsection{Infinitesimal bialgebras}
The concept of infinitesimal bialgebras emerged from the pioneering work of Joni and Rota~\cite{JR},  in order  to give an algebraic framework for the calculus of Newton divided differences.
Namely, an infinitesimal bialgebra is a module $A$ which is
simultaneously an algebra (possibly without a unit) and a coalgebra (possibly without a counit),
such that the coproduct $\Delta$ is a derivation:
\begin{equation}
\Delta(ab)=a\cdot\Delta(b)+\Delta(a)\cdot b\,\text{ for }\, a, b\in A.
\label{eq:comp1}
\end{equation}
Aguiar~\cite{MA} showed that there is no non-zero infinitesimal bialgebra which is both unitary and counitary.
In that paper, Aguiar equipped an infinitesimal bialgebra with an antipode $S$ by the name of an infinitesimal Hopf algebra, taking into account most of its combinatorial properties ~\cite{Agu02} and having a wide of applications, such as Yang-Baxter equations, Drinfeld's doubles,  pre-Lie algebras and brace algebras.
The basic theory of infinitesimal bialgebras and infinitesimal Hopf algebras was originally established  in~\cite{MA, Agu01, Aguu02},
such as quasi-triangular infinitesimal bialgebras, corresponding associative Yang-Baxter equations and Drinfeld's doubles. In 2010, Bai~\cite{Bai10} introduced the concept of antisymmetric infinitesimal bialgebras, which has a close connection with dendriform D-bialgebras~\cite{Bai10}, Frobenius algebras~\cite{Bai10}, $\mathcal{O}$-operators~\cite{BGN122} and generalized AYBE~\cite{BGN12}.
Recently, Wang~\cite{WW14} generalized  Aguiar's result by developing the Drinfeld's double for braided infinitesimal Hopf algebras in Yetter-Drinfeld categories,
and Yau~\cite{Yau10} introduced infinitesimal Hom-bialgebras and further studied by  Liu, Makhlouf, Menini and Panaite~\cite{LMMP19}.

\subsection{Motivations for infinitesimal bialgebras going weighted}
In 2006, another version of infinitesimal bialgebras and infinitesimal Hopf algebras was defined by Loday and Ronco~\cite{LR06}
and brought new life on rooted trees by Foissy~\cite{Foi09, Foi10}.
More precisely,  an infinitesimal bialgebra (of this version) is a module $A$, both an associative unitary algebra and a coassociative counitary coalgebra, with
the following compatibility:
\begin{equation}
\Delta(ab)=a\cdot\Delta(b)+\Delta(a)\cdot b-a\ot b\,\text{ for }\, a, b\in A.
\label{eq:comp2}
\end{equation}

In the Ph.D. thesis~\cite{EF06}, Ebrahimi-Fard unified these two compatibilities---Eqs.~(\ref{eq:comp1}) and~(\ref{eq:comp2})---into a
weighted version:
\begin{align*}
\Delta(ab)=a\cdot\Delta(b)+\Delta(a)\cdot b+\lambda (a\ot b)\,\text{ for }\, a, b\in A,
\end{align*}
where $\lambda \in \bfk$ is a fixed constant.
This leads to the concept of weighted infinitesimal
bialgebras~\cite{EF06} . See Definitions~\ref{def:iub}  below.

Our  second motivation comes from the notation of  weighted associative classical Yang-Baxter equation~\cite{EF06}, which was rediscovered by Ogievetsky and T. Popov~\cite{OP10} under the name of non-homogenous associative classical Yang-Baxter equation. Parallel to the well-known fact that the solutions of a classical Yang-Baxter equation give rise to Lie bialgebras~\cite{Dri86}, Aguiar~\cite{MA} studied the associative Yang-Baxter equation~\cite{Pol02, Zhe98} (AYBE)
\begin{align*}
r_{13}r_{12}-r_{12}r_{23}+r_{23}r_{13}=0,
\end{align*}
and showed that any solution $r$ of AYBE in an algebra $A$ induces  an infinitesimal unitary bialgebra of weight zero. This result was generalized by Ebrahimi-Fard~\cite{EF06}
by the concept of  weighted associative classical Yang-Baxter equation:
\begin{align*}
r_{13}r_{12}-r_{12}r_{23}+r_{23}r_{13}=\lambda r_{13}.
\end{align*}
It should be pointed out that any solution $r$ of a weight ACYBE in an algebra $A$ endows $A$ with a weighted infinitesimal unitary bialgebra, involving a weighted principle derivation~\cite{EF06, OP10, ZGZ18}.
Thus the weighted infinitesimal unitary bialgebra can be regarded as  an algebraic meaning  of a weighted ACYBE.

\subsection{An interest in combinatorics}
The rooted forest is a significant object studied in combinatorics and algebra.
One of the most important examples is the Connes-Kreimer Hopf algebra of rooted forests, which was introduced and studied extensively
in~\cite{BF10, CK98, CF11, Fo02, Hof03, Moe01} and was used to treat a problem of Renormalisation in Quantum Field Theory~\cite{CK1, FGK, GPZ1, Kre98}. There are also many other Hopf algebraic structures on rooted forests, such as Foissy-Holtkamp~\cite{Foi02, Hol03}, Loday-Ranco~\cite{LR98} and Grossman-Larson~\cite{GL89}.
One reason for significance of these algebraic structures on top of rooted forests is that most of them possess universal properties.
For example, the Connes-Kreimer Hopf algebra of rooted forests
inherits its algebra structure from the initial object in the category of (commutative) algebras with a linear operator~\cite{Foi02,Moe01}.
Recently this universal property of rooted forests was generalized in~\cite{ZGG16} in terms of decorated planar rooted forests,
and the universal property of Loday-Ronco Hopf algebra was investigated in~\cite{ZG18} in terms of decorated planar binary trees.

The concept of an algebra with (one or more) linear operators mentioned in last paragraph was invented by Kurosh~\cite{Kur60}.
Later Guo~\cite{Guo09} called them $\Omega$-operated algebras and constructed the free objects in terms of various combinatorial objects, such as Motzkin paths, rooted forests and bracketed words. Here $\Omega$ is a nonempty set to index the linear operators.
See also~\cite{BCQ10, GG}. The well-known Connes-Kreimer Hopf algebra of rooted forests can be treated in the framework of operated algebras,
with the help of the grafting operation $B^+$.
Moreover, the decorated planar rooted forests $H_{\RT}(\Omega)$ whose vertices are decorated by a nonempty set $\Omega$,
together with a set of grafting operations $\{B^+_\omega\mid \omega\in \Omega\})$, is the initial object (or free object on the empty set) in  the category of $\Omega$-operated algebras~\cite{KP,ZGG16}.

The very first example of interest is the construction of an infinitesimal bialgebra of weight $\lambda$ on the polynomial
algebra $\bfk \langle x_1,\ldots ,x_n\rangle$. Also we construct infinitesimal bialgebras of weight zero
on a class of decorated planar rooted forests $\hrts$ and give a combinatorial description of the coproduct, as in the case of the coproduct in the Connes-Kreimer Hopf algebra by admissible cuts.
Motivated by the grafting operations $\{B^+_\omega\mid \omega\in \Omega\})$
on $\hrts$, we extend our framework to $\Omega$-operated algebras and propose the concepts
of $\Omega$-operated infinitesimal bialgebras (resp. Hopf algebras) of weight $\lambda$.
Further, involving an infinitesimal version of a Hochschild 1-cocycle condition and the unitary property, we pose the concepts of $\Omega$-cocycle infinitesimal unitary bialgebras (resp. Hopf algebras) of weight $\lambda$.
As usual that combinatorial objects possess some universal properties,
we prove that the decorated planar rooted forests $H_{RT}(X, \Omega)$, together with the grafting operations $\{B^+_\omega\mid \omega\in \Omega\})$,
is the free object in the category of $\Omega$-cocycle infinitesimal unitary bialgebras (resp. Hopf algebras) of weight zero.

\subsection{An interest in Rota-Baxter algebras}
The concept of the Rota-Baxter algebra originated from the probability study of G. Baxter~\cite{Bax60}, is in order to understand Spitzer's identity in fluctuation theory and further studied by some well-known mathematicians such as Atkinson, Cartier and Rota~\cite{Ro}.
 More accurately, for a given commutative ring $\bfk$ and $\lambda \in \bfk$, a Rota-Baxter algebra of weight $\lambda$, is a pair $(R,P)$ consisting of an associative algebra $R$ over $\bfk$ and a linear operator $P:R\rightarrow R$ which satisfies the Rota-Baxter equation
\begin{align}
P(x)P(y)=P(xP(y)+P(x)y+\lambda xy) \, \text{ for }\, x, y\in R. \label{eq:RBid}
\end{align}
Then $P$ is called a Rota-Baxter operator of weight $\lambda$.
The  Rota-Baxter algebra could be regarded as an algebraic framework of the integral analysis, parallel to the fact that a differential algebra could be considered as an algebraic abstraction of differential equations. Let $(C, \Delta, \varepsilon)$ be a coalgebra and $(A, m)$ be an associative algebra. Then $\mathrm{Hom}_\bfk (C, A)$ becomes an associative algebra under the convolution product
\begin{align*}
(f\star g):= m(f\ot g)\Delta \, \text{ for all } f, g \in \mathrm{Hom}_\bfk (C, A).\,
\end{align*}
Suppose  that $C$ is further a bialgebra and replace $A$ by $C$, Ebrahimi-Fard~\cite{EF06} equipped $\mathrm{Hom}_\bfk (C, A)$ with another associative multiplication, namely, a simple composition of linear maps, denoted by $(\mathrm{Hom}_\bfk (C, C), \circ)$.
Let $(C,m,\Delta)$ be an infinitesimal bialgebra of weight $\lambda$. Define linear maps
\begin{align*}
&\gamma_L: \mathrm{Hom}_\bfk (C, C) \rightarrow \mathrm{Hom}_\bfk (C, C),\,  f\mapsto \id_C\star f,  \text { and }\\
&\gamma_R: \mathrm{Hom}_\bfk (C, C) \rightarrow \mathrm{Hom}_\bfk (C, C),\,  f\mapsto f \star \id_C.
\end{align*}
Then $\gamma_L, \gamma_R$ are two  commutative Rota-Baxter operators on $(\mathrm{Hom}_\bfk (C, C), \circ)$, see~\cite[Proposition~3.14]{EF06} for more details.

\subsection{An interest in pre-Lie algebras}
Pre-Lie algebras first appeared in the work of Vinberg on convex homogeneous
cones~\cite{Vin63} and also appeared independently at the same time in the study of the cohomology of associative algebras~\cite{Ger63}.
It has remarkable connections with many areas in mathematics and mathematical physics, such as complex and symplectic structures
on Lie groups and Lie algebras, classical and quantum Yang-Baxter equations, vertex algebras, quantum field theory and operads (see~\cite{Bai,Man11}
and references therein). In the framework of Aguiar~\cite{Aguu02}, a pre-Lie algebra structure is constructed
from an infinitesimal bialgebra of weight zero. Here we generalize Aguiar's construction and derive a pre-Lie algebra from an arbitrary
infinitesimal bialgebra of weight $\lambda$. We introduce a concept of derivations of  weight $\lambda$, which generalizes the well-known classical derivation. Having this concept in hand, we generalize the Gel'fand-Dorfman theorem on Novikov algebra under a weighted version. As an application, we derive a new pre-Lie algebra from a weighted commutative infinitesimal bialgebra.

Due to the construction of  an infinitesimal unitary bialgebra of weight $\lambda$ arising from an associative algebra (Example~\ref{exam:bialgebras}~(\ref{exam:ass})),
there is a close relationship among the  associative algebras, pre-Lie algebras, Lie algebras and weighted infinitesimal unitary bialgebras. This situation can be summarized in the sense of following commutative diagram of categories
\begin{align*}
\xymatrix@C0.8em{
\text{\small{Associative algebras} }\ar[d]_{} \ar[r]^{} &\text{\small{Weighted infinitesimal unitary bialgebras} } \ar[d]^{} \\
\text{\small{Lie algebras}} &  \ar[l]^{}  \text{\small{Pre-Lie algebras} }}
\end{align*}

{\bf Structure of the Paper.} In Section~\ref{sec:ibw}, we first propose the concept of an infinitesimal (unitary) bialgebra of weight $\lambda$. Then we show that any associative algebra (with unit) has a natural  weighted infinitesimal unitary bialgebraic structure. We  also introduce  the concept of a weighted infinitesimal counitary bialgebra , which makes the emergence of a weighted augmented algebra.
As a dual of weighted derivation, we propose the concept of a weighted coderivation (Definition~\ref{def:coder}) and prove that the dual of an infinitesimal bialgebra of weight $\lambda$ is also an infinitesimal bialgebra of weight $\lambda$ (Theorem~\ref{thm:dual}).

In Section~\ref{sec:wei}, inspired by Hopf modules, we introduce the concept of a weighted infinitesimal (unitary) Hopf modules (Definition~\ref{def:ihmodule}), which generalizes the infinitesimal Hopf modules studied by Aguiar~\cite{Agu01}. We show that some basic examples of classical Hopf modules also admit  a weighted infinitesimal Hopf versions in the context of weighted infinitesimal bialgebras (Example~\ref{exam:exmo} and Proposition~\ref{prop:moexam}). We prove that any $A$-module carries a natural structure of weighted infinitesimal unitary Hopf module over $A$, when $A$ is a weighted quasitriangular infinitesimal unitary bialgebra (Theorem~\ref{thm:main}).

Section~\ref{sec:infbi} is mainly devoted to infinitesimal unitary bialgebraic structures on top of our decorated planar rooted forests $H_{\RT}(X,\Omega)$.
After recalling the basics in planar rooted forests, we first give a new way to decorate planar rooted forests,
which makes it possible to construct more general free objects.
By applying an infinitesimal version of a Hochschild 1-cocycle condition (Eq.~(\ref{eq:cdbp})),
we construct a new coproduct on $H_{\RT}(X,\Omega)$ to equip it with a new coalgebraic structure (Lemma~\ref{lem:rt1}).
A combinatorial description of this new coproduct is also given (Theorem~\ref{thm:comb}).
Further $H_{\RT}(X,\Omega)$ can be turned into an infinitesimal unitary bialgebra of weight zero with respect to the concatenation product and the empty tree as its unit (Theorem~\ref{thm:rt2}).
Combining weighted infinitesimal bialgebras with operated algebras, we propose the concept of weighted $\Omega$-operated infinitesimal bialgebras (Definition~\ref{defn:xcobi}~(\ref{it:def1})).
When an infinitesimal 1-cocycle condition is involved,  the concept of weighted $\Omega$-cocycle infinitesimal unitary bialgebras is also introduced~(Definition~\ref{defn:xcobi}~(\ref{it:def3})).
Thanks to these concepts, we show that $H_{\RT}(X,\Omega)$ is the free $\Omega$-cocycle infinitesimal unitary bialgebra of weight zero on a set $X$ (Theorem~\ref{thm:propm}).

In Section~\ref{sec:preLie}, we derive a pre-Lie algebraic structure from a weighted infinitesimal bialgebra (Theorem~\ref{thm:preL}).
As an application, we equip $\hrts$ with a pre-Lie algebraic structure $(\hrts, \rhd_{\RT})$ and a Lie algebraic structure $(\hrts, [_{-}, _{-}]_{\RT})$ (Theorem~\ref{thm:preope}). The combinatorial descriptions of $\rhd_{\RT}$ and $[_{-}, _{-}]_{\RT}$ are also given (Corollary~\ref{coro:preLcomb}).

\smallskip
{\bf Notation.}
Throughout this paper, let $\bfk$ be a unitary commutative ring unless the contrary is specified,
which will be the base ring of all modules, algebras, coalgebras, bialgebras, tensor products, as well as linear maps.
By an algebra we mean an associative algebra (possibly without unit)
and by an coalgebra we mean a coassociative coalgebra (possibly without counit).
We use the Sweedler notation:$$\Delta(a) = \sum_{(a)} a_{(1)} \ot a_{(2)}.$$
For an algebra $A$, $A\ot A$ is viewed as an $A$-bimodule in the standard way
\begin{equation}
a\cdot(b\otimes c):=ab\otimes c\,\text{ and }\, (b\otimes c)\cdot a:= b\otimes ca,
\label{eq:dota}
\end{equation}
where $a,b,c\in A$.

\section{Weighted infinitesimal bialgebras}\label{sec:ibw}
In this section, we first recall the concept of weighted infinitesimal bialgebras~\cite{EF06},
 which generalizes simultaneously the one introduced by Joni and Rota~\cite{JR} and the one initiated by Loday and Ronco~\cite{LR06}. We then investigate the basic properties of weighted infinitesimal bialgebras.

\subsection{Basic definitions and  examples}

\begin{defn}
Let $\lambda$ be a given element of $\bfk$.
\begin{enumerate}
\item An {\bf infinitesimal bialgebra} (abbreviated {\bf $\epsilon$-bialgebra}) {\bf of weight $\lambda$} is a triple $(A,\mu,\Delta)$
consisting of an algebra $(A,\mu)$ and a coalgebra $(A,\Delta)$ that satisfies
\begin{equation}
\Delta (ab)=a\cdot \Delta(b)+\Delta(a) \cdot b+\lambda (a\ot b),\quad \forall a, b\in A.
\label{eq:wib}
\end{equation}
\item If further $(A,\mu,1)$ is a unitary algebra, then the quadruple $(A,\mu,1, \Delta)$ is called an {\bf infinitesimal unitary bialgebra of weight $\lambda$}.
\item If further $(A, \Delta, \varepsilon)$ is a counitary coalgebra, then the quadruple $(A,\mu,\Delta, \varepsilon)$ is called an {\bf infinitesimal counitary bialgebra of weight $\lambda$}.
\end{enumerate}
\label{def:iub}
\end{defn}

{\it For the rest of this paper}, we will
use the infix notation $\epsilon$-interchangeably with the adjective ``infinitesimal".

\begin{remark}
\begin{enumerate}
\item  This involving of weight allows us to unify the two versions of comparability Eqs.~(\ref{eq:comp1})
and~(\ref{eq:comp2}) employed respectively in~~\cite{JR} and~~\cite{LR06}.
More precisely, the $\epsilon$-bialgebra introduced by Joni and Rota~\cite{JR} is of weight 0,
and the $\epsilon$-bialgebra originated from Loday and Ronco~\cite{LR06} is of weight $-1$.

\item The aim to involve the unitary property is to use the infinitesimal 1-cocycle condition (Eq.~(\ref{eq:cdbp}) below), which needs the unit to construct a coproduct in an $\epsilon$-bialgebra on decorated planar rooted forests.
    Indeed, the infinitesimal bialgebra introduced by Loday and Ronco~\cite{LR06} involves the unitary and counitary properties. Here the requirement $\lambda\neq 0$, when one incorporates the unitary and counitary properties, is due to the fact that there are no non-zero
$\epsilon$-bialgebras of weight zero which are both unitary and counitary~\cite{MA}.
\end{enumerate}
\end{remark}

\begin{defn}
Let $A$ and  $B$ be two $\epsilon$-bialgebras of weight $\lambda$.
A map $\phi : A\rightarrow B$ is called an {\bf infinitesimal bialgebra morphism} if $\phi$ is an algebra morphism and a coalgebra morphism. The concept of an {\bf infinitesimal unitary bialgebra morphism} can be defined in the same way.
\end{defn}

\begin{remark}\label{remk:units}
Let $(A,m,1, \Delta)$ be an $\epsilon$-unitary bialgebra of weight $\lambda$. Then $\Delta(1)=-\lambda(1\ot1)$, as
    \begin{align*}
    \Delta(1)=\Delta(1\cdot1)=1 \cdot \Delta(1) +\Delta(1)\cdot 1 +\lambda (1\ot 1)=2\Delta(1)+\lambda (1\ot 1).
    \end{align*}
\end{remark}

\begin{exam}\label{exam:bialgebras}
Here are some examples of weighted $\epsilon$-unitary bialgebras.
\begin{enumerate}
\item \label{exam:ass} Any unitary algebra $(A, \mu,1)$ is an $\epsilon$-unitary bialgebra of weight $\lambda$ by taking $$\Delta(a)=-\lambda(a\ot 1) \, \text{ for }\,  a\in A.$$
\item \cite[Example~2.3.5]{MA} The polynomial algebra $\bfk \langle x_1, x_2, x_3,\ldots \rangle$ is an $\epsilon$-bialgebra of weight zero
with the coproduct $\Delta$
given by Eq.~(\ref{eq:wib}) and
 \begin{align*}
    \Delta (x_n)=\sum_{i=0}^{n-1}x_{i}\ot x_{n-1-i}=1\ot x_{n-1}+x_1\ot x_{n-2}+ \cdots +x_{n-1}\ot 1,
    \end{align*}
where we set $x_0=1$. Indeed it is further an $\epsilon$-unitary bialgebra of weight zero.

\item \cite[Example~2.3.2]{MA} A quiver $Q=(Q_0,Q_1, s,t)$ is a quadruple consisting of a set $Q_0$ of vertices, a set $Q_1$ of arrows, and two maps $s,t :Q_1\rightarrow Q_0$ which associate  each arrow $a\in Q_1$ to its source $s(a)\in Q_0$ and its target $t(a)\in Q_0$.  The path algebra $\bfk Q$ can be turned into an $\epsilon$-bialgebra of weight zero with the coproduct $\Delta$ given by:
    $$\Delta(a_1\cdots a_n):= \left\{
    \begin{array}{ll}
    0 & \text{ if } n = 0,\\
   s(a)\ot t(a)& \text{ if } n=1,\\
    s(a_{1})\ot a_2 \cdot\cdots a_n+a_1\cdots a_{n-1}\ot t(a_n) \\
+\sum_{i=1}^{n-2}a_1\cdots a_i\ot a_{i+2}\cdots a_n & \text{ if } n\geq 2,
    \end{array}
    \right.$$
 where $a_1\cdots a_n$ is a path in $\bfk Q$. Here we use the convention that $a_1\cdots a_n\in Q_0$ when $n=0$.

\item \cite[Section~1.4]{Foi08} Let $(A, m, 1,\Delta, \varepsilon, c)$ be a braided bialgebra with $A = \bfk\oplus \ker\varepsilon$ and the braiding
$c:A\ot A \to A\ot A$ given by
\begin{align*}
c:\left\{
    \begin{array}{ll}
    1\ot 1 \mapsto 1\ot 1,\\
   a\ot 1\mapsto 1\ot a, \\
    1\ot b \mapsto b\ot 1,\\
    a\ot b \mapsto 0,
    \end{array}
    \right.
\end{align*}
where $a,b\in \ker \varepsilon$. Then $(A, m, 1,\Delta, \varepsilon)$ is an $\epsilon$-unitary bialgebra of weight $-1$.
\item \cite[Section~2.3]{LR06}\label{exam:tensor}
Let $V$ denote a vector space. Recall that the tensor algebra $T(V)$ over $V$ is the tensor module,
\begin{align*}
T(V)=\bfk \oplus V\oplus V^{\ot 2}\oplus \cdots \oplus V^{\ot n}\oplus \cdots,
\end{align*}
equipped with the associative multiplication called concatenation defined by
\begin{align*}
v_1\cdots v_i\ot v_{i+1}\cdots v_n \mapsto v_1\cdots v_i v_{i+1}\cdots v_n \quad \text{ for } 0 \leq i\leq n,
\end{align*}
and with the convention that $v_1v_0=1$ and $v_{n+1}v_{n}=1$. It is a well known free associative algebra. The
tensor algebra $T(V)$ is an $\epsilon$-unitary bialgebra of weight $-1$ with the coassociative coproduct defined by
\begin{align*}
\Delta(v_1\cdots v_n):=\sum_{i=0}^{n}v_1\cdots v_i\ot v_{i+1}\cdots v_n.
\end{align*}

\item A  noncommutative {\bf polynomial algebra} $\bfk \langle x_1,\ldots ,x_n\rangle$ with coefficients in $\bfk$ is a free algebra generated by $\{x_1, \ldots, x_n\}$.
Denote by
$$ \mathrm{Mon}:=\{x_{i_1}^{\alpha_1} x_{i_2}^{\alpha_2} \cdots x_{i_m}^{\alpha_m}\mid 1\leq i_1, i_2, \ldots, i_m\leq n, \alpha_k\in \mathbb{N}\}.$$
Then the elements in $\mathrm{Mon}$ are called {\bf monomials} in $\bfk \langle x_1,\ldots ,x_n\rangle$ which are the elements from the set of all words in $\{x_1, \ldots, x_n\}$. Note that $\mathrm{Mon}$ is a $\bfk$-basis of $\bfk \langle x_1\ldots ,x_n\rangle$ and $\mathrm{Mon}$ is a free monoid with the identity $x_0:=1$.
We denote the multiplication on $\bfk \langle x_1,\ldots ,x_n\rangle$ by $m$.
For any word $w\in \mathrm{Mon}$ with length $l(w)=n$, we define a new notation to choose some elements of $w$. Denote by
\begin{align*}
w[i,j]:=\begin{cases}
\text{the $i$-th element to the $j$-th element of $w$} &\text{ if }1\leq i\leq j\leq n,\\
1 &\text{ otherwise}.
\end{cases}
\end{align*}
For any word $w\in \mathrm{Mon}$, define
\begin{align*}
\col (w):=\begin{cases}
0 &\text{ if }w=0,\\
-\lambda (1\otimes 1) &\text{ if }w=1,\\
\sum_{i=1}^nw[1,i-1]\otimes w[i+1,n]
+\lambda \sum_{i=1}^{n-1}w[1,i]\otimes w[i+1,n] &\text{ if }l(w)=n>0.
\end{cases}
\end{align*}
Then $(\bfk \langle x_1,\ldots ,x_n\rangle, \col, m)$ is  an $\epsilon$-unitary bialgebra of weight $\lambda$.

\end{enumerate}
\end{exam}

\subsection{Basic properties}

We first record a lemma for a preparation.
\begin{lemma}\label{lem:cobi}
Let $(A, \mu, \Delta, \varepsilon)$ be an $\epsilon$-counitary bialgebra of weight $\lambda$. Then
\begin{align*}
\varepsilon(ab)=-\lambda\varepsilon(a)\varepsilon(b) \, \text{ for any }\, a, b \in A.
\end{align*}
\end{lemma}

\begin{proof}
For $a\in A$, we have
\begin{equation}
a = (\varepsilon \ot \id) \Delta(a) = \sum_{(a)} (\varepsilon \ot \id) \, a_{(1)}\ot a_{(2)} = \sum_{(a)} \varepsilon(a_{(1)}) a_{(2)}
\label{eq:leftcu}
\end{equation}
and so
\begin{equation}
\varepsilon(a)  = \sum_{(a)} \varepsilon(a_{(1)}) \varepsilon(a_{(2)}) = (\varepsilon \ot \varepsilon) \Delta(a).
\label{eq:leftcu1}
\end{equation}
For any $a, b \in A$,
\begin{align*}
\varepsilon(ab)=&\ (\varepsilon \ot \varepsilon)\Delta(ab) \quad\quad (\text{by Eq.~(\ref{eq:leftcu1})}) \\
=& (\varepsilon \ot \varepsilon)(a\cdot \Delta(b)+\Delta(a) \cdot b+\lambda (a\ot b))\\
=&\ (\varepsilon \ot \varepsilon)\left(\sum_{(b)}ab_{(1)}\ot b_{(2)}+\sum_{(a)}a_{(1)}\ot a_{(2)}b+\lambda (a\ot b)\right)\\
=&\sum_{(b)}\varepsilon(ab_{(1)})\varepsilon(b_{(2)})+\sum_{(a)}\varepsilon(a_{(1)}) \varepsilon(a_{(2)}b)+\lambda\varepsilon(a)\varepsilon(b)\\
=&\sum_{(b)}\varepsilon \Big( ab_{(1)}\varepsilon(b_{(2)}) \Big)+ \sum_{(a)} \varepsilon\Big(\varepsilon(a_{(1)})a_{(2)}b \Big)+\lambda\varepsilon(a)\varepsilon(b)\\
=&\ \varepsilon(ab)+\varepsilon(ab)+\lambda\varepsilon(a)\varepsilon(b)  \quad\quad (\text{by Eq.~(\ref{eq:leftcu})}) ,
\end{align*}
as required.
\end{proof}

\begin{remark}
\begin{enumerate}
\item When $\lambda=-1$, the counit $\varepsilon$ is an algebra morphism. In this case, if the algebra $A$ has a unit $1$, then $\varepsilon(1)=1_{\bfk}$.
\item The counital $\epsilon$-bialgebras introduced by Aguiar~\cite{Aguu02} is of weight zero and and the $\epsilon$-bialgebra originated from Loday and Ronco~\cite{LR06} is of weight $-1$.
\end{enumerate}
\end{remark}
This motivates the concept of an augmented algebra of weight $\lambda$, which generalizes the augmented algebras introduced by Aguiar~\cite{Aguu02}.

\begin{defn}\label{def:waa}
Let $\lambda$ be a given element of $\bfk$.
An {\bf augmented algebra} {\bf of weight $\lambda$} is a triple $(A,\mu, \varepsilon)$
consisting of an algebra $(A,\mu)$ (possibly without unit) and a linear {\bf augmentation map}
$\varepsilon: A \rightarrow \bfk$ of weight $\lambda$ satisfying
\begin{align}
\varepsilon(ab)=-\lambda\varepsilon(a)\varepsilon(b) \, \text{ for }\, a, b \in A. \label{eq:Augid}
\end{align}
\end{defn}
\begin{defn}
Let $(A, \mu_A, \varepsilon_A)$ and $(B, \mu_B, \varepsilon_B)$ be augmented algebras of weight $\lambda$.
An algebra homomorphism $f: A \rightarrow B$ is said to be {\bf augmented} if it satisfies $\varepsilon_B f=\varepsilon_A$.
\end{defn}

\begin{remark}
By Lemma~\ref{lem:cobi}, any $\epsilon$-counitary bialgebra of weight $\lambda$ is an augmented algebra of weight $\lambda$.
\end{remark}

The following result gives a way to add a counity to an $\epsilon$-algebra.

\begin{prop}
Let $(A, \mu, \Delta)$ be an $\epsilon$-bialgebra of weight $\lambda$, generated by $S$ as an algebra.
Suppose $\varepsilon: A \rightarrow \bfk$ is an augmentation map of weight $\lambda$ and
the counicity $(\varepsilon \ot \id) \Delta=\id=(  \id\ot \varepsilon) \Delta $  holds on $S$.
Then the quadruple $(A, \mu, \Delta, \varepsilon)$ is an $\epsilon$-counitary bialgebra of weight $\lambda$.
\end{prop}

\begin{proof}
It suffices to prove the counicity on $A$ by showing
\begin{align*}
(\varepsilon \ot \id) \Delta(ab)=ab\,  \text{ for } \, a,b\in S.
\end{align*}
For any $a, b\in S$, it follows from hypothesis that
$$(\varepsilon \ot \id) \Delta(a)=a\,\text{ and }\, (\varepsilon \ot \id) \Delta(b)=b,$$
whence
\begin{align*}
(\varepsilon \ot \id) \Delta(ab)=&(\varepsilon \ot \id)(a\cdot \Delta(b)+\Delta(a) \cdot b+\lambda (a\ot b))\\
=&(\varepsilon \ot \id)\left(\sum_{(b)}ab_{(1)} \ot b_{(2)}+\sum_{(a)}a_{(1)} \ot a_{(2)}b+\lambda (a\ot b)\right)\\
=&\sum_{(a)}\varepsilon(ab_{(1)}) \ot b_{(2)}+\sum_{(a)}\varepsilon(a_{(1)}) \ot a_{(2)}b+\lambda (\varepsilon(a)\ot b)\\
=&-\lambda \sum_{(a)}\varepsilon(a)\varepsilon(b_{(1)}) \ot b_{(2)}+\sum_{(a)}\varepsilon(a_{(1)}) \ot a_{(2)}b+\lambda (\varepsilon(a)\ot b)\\
=&-\lambda \varepsilon(a)b+ab+\lambda (\varepsilon(a)\ot b)\\
=&ab.
\end{align*}
Thus $\varepsilon$ is a left counit. By a similar calculation, $\varepsilon$ is a right counit.
\end{proof}

The following result shows that the tensor of two augmented algebras of weight $\lambda$ is still an augmented algebra of weight $\lambda$.
So the category of augmented algebras of weight $\lambda$ forms a monoidal category.

\begin{prop}\label{prop:aug}
Let $(A, \mu_A, \varepsilon_A)$ and $(B, \mu_B, \varepsilon_B)$ be augmented algebras of weight $\lambda$. Then $A\ot B$ is an associative algebra with the multiplication $\cm$ defined by
\begin{align}
(a_1 \ot b_1)\cm (a_2 \ot b_2):=\varepsilon_B(b_1)a_1a_2\ot b_2+\varepsilon_A(a_2)a_1\ot b_1b_2+\lambda \varepsilon_A(a_2)\varepsilon_B(b_1)a_1\ot b_2
\label{eq:Tmul}
\end{align}
for $a_i\in A, b_i\in B, 1\leq i\leq 2$.
Furthermore, $A\ot B$ is an augmented algebras of weight $\lambda$ with the augmentation map given by
\begin{align}
\varepsilon_{A\ot B}(a\ot b):=\varepsilon_A(a)\varepsilon_B(b)\, \text{ for}\, a \in A,  b\in B .
\label{eq:Taug}
\end{align}
\end{prop}
\begin{proof}
It is sufficient to check the associative law:
\begin{align*}
\Big((a_1\ot b_1)\cm (a_2\ot b_2)\Big) \cm (a_3\ot b_3)=(a_1\ot b_1)\cm \Big((a_2\ot b_2) \cm (a_3\ot b_3)\Big)
\end{align*}
for $a_i\in A, b_i\in B, 1\leq i\leq 3$.
On the one hand,
\begin{align*}
&\ \Big((a_1\ot b_1)\cm (a_2\ot b_2)\Big) \cm (a_3\ot b_3)\\
=&\ \Big(\varepsilon_B(b_1)a_1a_2\ot b_2+\varepsilon_A(a_2)a_1\ot b_1b_2+\lambda \varepsilon_A(a_2)\varepsilon_B(b_1)a_1\ot b_2\Big)\cm (a_3\ot b_3)\quad (\text{by Eq.~(\ref{eq:Tmul})})\\
=&\ \varepsilon_B(b_1)(a_1a_2\ot b_2)\cm (a_3\ot b_3)+\varepsilon_A(a_2)(a_1\ot b_1b_2) \cm (a_3\ot b_3)\\
&+\lambda \varepsilon_A(a_2)\varepsilon_B(b_1)(a_1\ot b_2) \cm (a_3\ot b_3)\\
=&\ \varepsilon_B(b_1)\Big(\varepsilon_B(b_2)a_1a_2a_3\ot b_3+\varepsilon_A(a_3)a_1a_2\ot b_2b_3+\lambda\varepsilon_A(a_3)\varepsilon_B(b_2)a_1a_2\ot b_3\Big)\\
&+\varepsilon_A(a_2)\Big(\varepsilon_B(b_1b_2)a_1a_3\ot b_3+\varepsilon_A(a_3)a_1\ot b_1b_2b_3+\lambda\varepsilon_A(a_3)\varepsilon_B(b_1b_2)a_1\ot b_3\Big)\\
&+\lambda\varepsilon_A(a_2)\varepsilon_B(b_1)\Big(\varepsilon_B(b_2)a_1a_3\ot b_3+\varepsilon_A(a_3)a_1\ot b_2b_3+\lambda\varepsilon_A(a_3)\varepsilon_B(b_2)a_1\ot b_3\Big)\\
=&\ \varepsilon_B(b_1)\varepsilon_B(b_2)a_1a_2a_3\ot b_3+\varepsilon_B(b_1)\varepsilon_A(a_3)a_1a_2\ot b_2b_3 \\ &+\lambda\varepsilon_B(b_1)\varepsilon_A(a_3)\varepsilon_B(b_2)a_1a_2\ot b_3-\lambda\varepsilon_A(a_2)\varepsilon_B(b_1)\varepsilon_B(b_2)a_1a_3\ot b_3\\
&+\varepsilon_A(a_2)\varepsilon_A(a_3)a_1\ot b_1b_2b_3 -\lambda^2\varepsilon_A(a_2)\varepsilon_A(a_3)\varepsilon_B(b_1)\varepsilon_B(b_2)a_1\ot b_3\\
&+\lambda\varepsilon_A(a_2)\varepsilon_B(b_1)\varepsilon_B(b_2)a_1a_3\ot b_3+\lambda\varepsilon_A(a_2)\varepsilon_B(b_1)\varepsilon_A(a_3)a_1\ot b_2b_3\\
&+\lambda^2\varepsilon_A(a_2)\varepsilon_B(b_1)\varepsilon_A(a_3)\varepsilon_B(b_2)a_1\ot b_3\quad (\text{by Eq.~(\ref{eq:Augid})})\\
=&\ \varepsilon_B(b_1)\varepsilon_B(b_2)a_1a_2a_3\ot b_3+\varepsilon_B(b_1)\varepsilon_A(a_3)a_1a_2\ot b_2b_3
+\lambda\varepsilon_B(b_1)\varepsilon_A(a_3)\varepsilon_B(b_2)a_1a_2\ot b_3\\
&+\varepsilon_A(a_2)\varepsilon_A(a_3)a_1\ot b_1b_2b_3 +\lambda\varepsilon_A(a_2)\varepsilon_B(b_1)\varepsilon_A(a_3)a_1\ot b_2b_3.
\end{align*}
On the other hand,
\begin{align*}
&\ (a_1\ot b_1)\cm \Big((a_2\ot b_2) \cm (a_3\ot b_3)\Big)\\
=&\ (a_1\ot b_1)\cm\Big(\varepsilon_B(b_2)a_2a_3\ot b_3+\varepsilon_A(a_3)a_2\ot b_2b_3+\lambda \varepsilon_A(a_3)\varepsilon_B(b_2)a_2\ot b_3\Big)\quad (\text{by Eq.~(\ref{eq:Tmul})})\\
=&\ \varepsilon_B(b_2)(a_1\ot b_1)\cm (a_2a_3\ot b_3)+\varepsilon_A(a_3)(a_1\ot b_1)\cm(a_2\ot b_2b_3)\\
&+\lambda \varepsilon_A(a_3)\varepsilon_B(b_2)(a_1\ot b_1)\cm (a_2\ot b_3)\\
=&\ \varepsilon_B(b_2) \Big(\varepsilon_B(b_1)a_1a_2a_3\ot b_3+\varepsilon_A(a_2a_3)a_1\ot b_1b_3+\lambda \varepsilon_A(a_2a_3)\varepsilon_B(b_1)a_1 \ot b_3\Big)\\
&+\varepsilon_A(a_3) \Big(\varepsilon_B(b_1)a_1a_2\ot b_2 b_3+\varepsilon_A(a_2)a_1\ot b_1b_2b_3+\lambda \varepsilon_A(a_2)\varepsilon_B(b_1)a_1 \ot b_2b_3\Big)\\
&+ \lambda \varepsilon_A(a_3)\varepsilon_B(b_2) \Big(\varepsilon_B(b_1)a_1a_2\ot b_3+\varepsilon_A(a_2)a_1\ot b_1b_3+\lambda \varepsilon_A(a_2)\varepsilon_B(b_1)a_1 \ot b_3\Big)\\
=&\ \varepsilon_B(b_2)\varepsilon_B(b_1)a_1a_2a_3\ot b_3-\lambda\varepsilon_B(b_2)\varepsilon_A(a_2)\varepsilon_A(a_3)a_1\ot b_1 b_3 \\ &-\lambda^2\varepsilon_B(b_2)\varepsilon_A(a_2)\varepsilon_A(a_3)\varepsilon_B(b_1)a_1\ot b_3+\varepsilon_A(a_3)\varepsilon_B(b_1)a_1a_2\ot b_2b_3\\
&+\varepsilon_A(a_3)\varepsilon_A(a_2)a_1\ot b_1b_2b_3+\lambda\varepsilon_A(a_3)\varepsilon_A(a_2)\varepsilon_B(b_1)a_1\ot b_2b_3\\
&+\lambda\varepsilon_A(a_3)\varepsilon_B(b_2)\varepsilon_B(b_1)a_1a_2\ot b_3
+\lambda\varepsilon_A(a_3)\varepsilon_B(b_2)\varepsilon_A(a_2)a_1\ot b_1 b_3\\
&+\lambda^2\varepsilon_A(a_3)\varepsilon_B(b_2)\varepsilon_A(a_2)\varepsilon_B(b_1)a_1\ot b_3\quad (\text{by Eq.~(\ref{eq:Augid})})\\
=&\ \varepsilon_B(b_2)\varepsilon_B(b_1)a_1a_2a_3\ot b_3
+\varepsilon_A(a_3)\varepsilon_B(b_1)a_1a_2\ot b_2b_3
+\varepsilon_A(a_3)\varepsilon_A(a_2)a_1\ot b_1b_2b_3\\
&+\lambda\varepsilon_A(a_3)\varepsilon_A(a_2)\varepsilon_B(b_1)a_1\ot b_2b_3
+\lambda\varepsilon_A(a_3)\varepsilon_B(b_2)\varepsilon_B(b_1)a_1a_2\ot b_3,
\end{align*}
as desired.

We now show that $A\ot B$ is an augmented algebras of weight $\lambda$. For any $a_1\ot b_1, a_2\ot b_2\in A\ot B$,
\begin{align*}
&\ \varepsilon_{A\ot B}\Big((a_1\ot b_1)\cm (a_2\ot b_2)\Big)\\
=&\ \varepsilon_{A\ot B}\Big(\varepsilon_B(b_1)a_1a_2\ot b_2+\varepsilon_A(a_2)a_1\ot b_1b_2+\lambda \varepsilon_A(a_2)\varepsilon_B(b_1)a_1\ot b_2\Big)\quad (\text{by Eq.~(\ref{eq:Tmul})})\\
=&\ \varepsilon_B(b_1)\varepsilon_{A\ot B}(a_1a_2\ot b_2)+\varepsilon_A(a_2)\varepsilon_{A\ot B}(a_1\ot b_1b_2)
+\lambda\varepsilon_A(a_2)\varepsilon_B(b_1)\varepsilon_{A\ot B}(a_1\ot b_2)\\
=&\ \varepsilon_B(b_1)\varepsilon_{A}(a_1a_2) \varepsilon_B(b_2)+\varepsilon_A(a_2)\varepsilon_{A}(a_1) \varepsilon_B (b_1b_2)\\
&+\lambda\varepsilon_A(a_2)\varepsilon_B(b_1)\varepsilon_{A}(a_1) \varepsilon_B(b_2)\quad (\text{by Eq.~(\ref{eq:Taug})})\\
=&-\lambda\varepsilon_B(b_1)\varepsilon_{A}(a_1)\varepsilon_{A}(a_2) \varepsilon_B(b_2)-\lambda\varepsilon_A(a_2)\varepsilon_{A}(a_1) \varepsilon_B (b_1)\varepsilon_B(b_2)\\
&+\lambda\varepsilon_A(a_2)\varepsilon_B(b_1)\varepsilon_{A}(a_1) \varepsilon_B(b_2)\quad (\text{by Eq.~(\ref{eq:Augid})})\\
=&-\lambda\varepsilon_{A}(a_1)\varepsilon_B(b_1)\varepsilon_{A}(a_2) \varepsilon_B(b_2)\\
=&-\lambda\varepsilon_{A\ot B}(a_1\ot b_1)\varepsilon_{A\ot B}(a_2\ot b_2).
\end{align*}
This completes the proof.
\end{proof}

Let $(A, m,1,\Delta)$ be an $\epsilon$-unitary bialgebra of weight $\lambda$.
Then the \bfk-module $A\ot A$ becomes a coalgebra with the coproduct~\cite[Example 2.2.2]{Gub}:
\begin{align*}
\Delta_{A\ot A}:A\ot A \overset{\Delta\ot \Delta}{\rightarrow}A\ot A\ot A\ot A\overset{\id_A\ot \tau \ot \id_A}{\rightarrow}A\ot A\ot A\ot A,
\end{align*}
where $\tau:A\ot A\rightarrow A\ot A, a\ot b\mapsto b\ot a$ is the switch map.
However the multiplicaiton $m: A\ot A\to A$ is not a morphism of coalgebras under this classical coproduct.
In fact,
\begin{align*}
\Delta\circ m(a\ot b)=&\ \Delta(ab)= a\cdot\Delta(b)+\Delta(a)\cdot b+\lambda (a\ot b)\\
=&\ \sum_{(b)} ab_{(1)} \ot b_{(2)}+\sum_{(a)} a_{(1)} \ot a_{(2)}b+\lambda (a\ot b) \\
\neq &\ \sum_{(a)}\sum_{(b)} a_{(1)}  b_{(1)} \ot a_{(2)} b_{(2)}\\
=& \ (m\ot m)\circ\Delta_{A\ot A}(a\ot b).
\end{align*}
We erect a new coproduct on $A\ot A$ to solve this incompatibility,
which generalizes the one in case of weight zero~\cite[Lemma~3.5]{MA}. Recall that an element $e\in A$ is called a {\bf group like element of weight $\lambda$} if it satisfies $\Delta(e)=\lambda(e\ot e)$~\cite{ZCGL18}.
By Remark~\ref{remk:units}, the unit in an $\epsilon$-unitary bialgebra is a group like element of weight $-\lambda$.

\begin{prop}\label{prop:col}
Let $(A, \Delta_A)$ and $(B, \Delta_B)$ be coassociative coalgebras (possibly without counit), with group like elements $1_A$ and $1_B$ of weight $-\lambda$. Then $A\ot B$ is a coassociative coalgebra with the coproduct defined by
\begin{align}
\Delta(a\ot b):=\sum_{(a)}(a_{(1)}\ot 1_B)\ot (a_{(2)}\ot b)+\sum_{(b)}(a\ot b_{(1)})\ot (1_A \ot b_{(2)})+\lambda(a\ot 1_B)\ot (1_A\ot b)
\label{eq:te}
\end{align}
for any $a\ot b \in  A \ot B$.
\end{prop}

\begin{proof}
It is sufficient to check the coassociative law:
\begin{align*}
(\mathrm{id} \ot \Delta)\Delta(a\ot b)=(\Delta \ot \mathrm{id}  )\Delta(a\ot b) \quad \text{ for } a\ot b \in A\ot B.
\end{align*}
By Eq.~(\ref{eq:te}) and $\Delta_A(1_A) = -\lambda 1_A\ot 1_A$ in Remark~\ref{remk:units},
\begin{equation}
\begin{aligned}
\Delta(1_A\ot b)&=(-\lambda\ot 1_B)\ot(1_A\ot b)+\sum_{(b)}(1_A\ot b_{(1)})\ot (1_A \ot b_{(2)})+\lambda(1_A\ot 1_B)\ot (1_A\ot b)\\
&=\sum_{(b)}(1_A\ot b_{(1)})\ot (1_A \ot b_{(2)}).
\end{aligned}
\label{eq:newco1}
\end{equation}
Similarly,
\begin{align}
\Delta(a\ot 1_B)=\sum_{(a)}(a_{(1)}\ot 1_B)\ot (a_{(2)}\ot 1_B).
\label{eq:newco2}
\end{align}
Now on the one hand,
\allowdisplaybreaks{
\begin{align*}
&\ (\mathrm{id} \ot \Delta)\Delta(a\ot b)\\
=&\ (\mathrm{id} \ot \Delta)\bigg(\sum_{(a)}(a_{(1)}\ot 1_B)\ot (a_{(2)}\ot b)+\sum_{(b)}(a\ot b_{(1)})\ot (1_A \ot b_{(2)})
+\lambda(a\ot 1_B)\ot (1_A\ot b)\bigg)\\
=&\ \sum_{(a)}(a_{(1)}\ot 1_B)\ot\Delta (a_{(2)}\ot b)+\sum_{(b)}(a\ot b_{(1)})\ot \Delta(1_A \ot b_{(2)})
+\lambda(a\ot 1_B)\ot \Delta(1_A\ot b)\\
=&\ \sum_{(a)}(a_{(1)}\ot 1_B)\ot \bigg(\sum_{(a_{(2)})}(a_{(2)(1)}\ot 1_B)\ot (a_{(2)(2)}\ot b)+\sum_{(b)}(a_{(2)}\ot b_{(1)})\ot (1_A \ot b_{(2)})\\
&\ +\lambda(a_{(2)}\ot 1_B)\ot (1_A\ot b)\bigg)+\sum_{(b)}(a\ot b_{(1)})\ot \bigg(\sum_{(b_{(2)})}(1_A\ot b_{(2)(1)})\ot (1_A \ot b_{(2)(2)})\bigg)\\
&\ +\lambda(a\ot 1_B)\ot \bigg(\sum_{(b)}(1_A\ot b_{(1)})\ot (1_A \ot b_{(2)})\bigg)   \quad \text{(by Eqs.~(\ref{eq:te}) and~(\ref{eq:newco1}))}\\
=&\ \sum_{(a)} (a_{(1)}\ot 1_B)\ot (a_{(2)}\ot 1_B)\ot (a_{(3)}\ot b)
+\sum_{(a)}\sum_{(b)}(a_{(1)}\ot 1_B)\ot(a_{(2)}\ot b_{(1)})\ot (1_A \ot b_{(2)})\\
&\ +\lambda\sum_{(a)}(a_{(1)}\ot 1_B)\ot(a_{(2)}\ot 1_B)\ot (1_A\ot b)
+\sum_{(b)} (a\ot b_{(1)})\ot (1_A\ot b_{(2)})\ot (1_A \ot b_{(3)})\\
&\ +\lambda\sum_{(b)}(a\ot 1_B)\ot (1_A\ot b_{(1)})\ot (1_A \ot b_{(2)}) .
\end{align*}
}
On the other hand,
\allowdisplaybreaks{
\begin{align*}
&\ (\Delta \ot\mathrm{id})\Delta(a\ot b)\\
=&\ (\Delta \ot\mathrm{id})\bigg(\sum_{(a)}(a_{(1)}\ot 1_B)\ot (a_{(2)}\ot b)+\sum_{(b)}(a\ot b_{(1)})\ot (1_A \ot b_{(2)})
+\lambda(a\ot 1_B)\ot (1_A\ot b)\bigg)\\
=&\ \sum_{(a)}\Delta (a_{(1)}\ot 1_B)\ot(a_{(2)}\ot b)+\sum_{(b)}\Delta(a\ot b_{(1)})\ot (1_A \ot b_{(2)})
+\lambda\Delta(a\ot 1_B)\ot (1_A\ot b)\\
=&\ \sum_{(a)}\sum_{(a_{(1)})}(a_{(1)(1)}\ot 1_B)\ot (a_{(1)(2)}\ot 1_B)\ot (a_{(2)}\ot b)+\sum_{(b)}\bigg(\sum_{(a)}(a_{(1)}\ot 1_B)\ot (a_{(2)}\ot b_{(1)})\\
&\ +\sum_{(b_{(1)})}(a\ot b_{(1)(1)})\ot (1_A \ot b_{(1)(2)})+\lambda(a\ot 1_B)\ot (1_A\ot b_{(1)})\bigg)\ot (1_A \ot b_{(2)})\\
&\ +\lambda\sum_{(a)}(a_{(1)}\ot 1_B)\ot (a_{(2)}\ot 1_B) \ot (1_A\ot b)  \quad \text{(by Eqs.~(\ref{eq:te}) and~(\ref{eq:newco2}))}\\
=&\ \sum_{(a)} (a_{(1)}\ot 1_B)\ot (a_{(2)}\ot 1_B)\ot (a_{(3)}\ot b)+\sum_{(b)}\sum_{(a)}(a_{(1)}\ot 1_B)\ot (a_{(2)}\ot b_{(1)})\ot(1_A \ot b_{(2)})\\
&\ +\sum_{(b)} (a\ot b_{(1)})\ot (1_A \ot b_{(2)})\ot (1_A \ot b_{(3)})+ \lambda\sum_{(b)}(a\ot 1_B)\ot (1_A\ot b_{(1)})\ot (1_A \ot b_{(2)})\\
&\ +\lambda\sum_{(a)}(a_{(1)}\ot 1_B)\ot (a_{(2)}\ot 1_B) \ot (1_A\ot b)\\
=&\  (\mathrm{id} \ot \Delta)\Delta(a\ot b).
\end{align*}
}
This completes the proof.
\end{proof}

Then we have the following results which  generalise
simultaneously the one introduced by Aguiar~\cite{MA} and the one initiated by Foissy~\cite{Foi09}.
\begin{theorem}\label{them:dual}
We have the following two dual statements.
\begin{enumerate}
\item Let $(A, \mu_A, \Delta_A, \varepsilon_A)$ be an $\epsilon$-counitary bialgebra of weight $\lambda$ and view $(A\ot A, \cdot_{\varepsilon})$ as an algebra as in Proposition~\ref{prop:aug}. Then $\Delta_A: (A, \mu_A)\rightarrow (A\ot A, \cdot_{\varepsilon})$ is a morphism of algebras.
    \label{it:a}
\item Let $(A, \mu_A, \Delta_A, 1_A)$ be an $\epsilon$-unitary bialgebra of weight $\lambda$ and view $(A\ot A, \Delta)$ as a coalgebra as in Proposition~\ref{prop:col}.
Then $\mu_{A\ot A}: (A\ot A, \Delta) \rightarrow (A, \Delta_A)$ is a morphism of coalgebras. \label{it:b}
\end{enumerate}
\end{theorem}

\begin{proof}
(\ref{it:a})
It suffices to prove
\begin{align*}
\Delta(a)\cdot_{\varepsilon} \Delta(b)=\Delta(ab) \, \text{ for}\, a, b \in A,
\end{align*}
which follows from
\begin{align*}
\Delta_A(a)\cdot_{\varepsilon} \Delta_A(b)=&\left(\sum_{(a)}a_{(1)} \ot a_{(2)}\right) \cdot_{\varepsilon}\left(\sum_{(b)}b_{(1)} \ot b_{(2)}\right)\\
=&\sum_{(a)}\sum_{(b)}\left(a_{(1)} \ot a_{(2)}\right) \cdot_{\varepsilon}\left(b_{(1)} \ot b_{(2)}\right)\\
=&\sum_{(a)}\sum_{(b)}\Big(\varepsilon(a_{(2)})a_{(1)}b_{(1)}\ot b_{(2)}+\varepsilon(b_{(1)})a_{(1)}\ot a_{(2)}b_{(2)}+\lambda \varepsilon(a_{(2)})\varepsilon(b_{(1)})a_{(1)} \ot b_{(2)} \Big)\\
=&\sum_{(b)}ab_{(1)}\ot b_{(2)}+\sum_{(a)}a_{(1)}\ot a_{(2)}b+\lambda (a\ot b)\quad(\text{by the counicity})\\
=&\ a\cdot \Delta_A(b)+\Delta_A(a)\cdot b+\lambda (a\ot b)\\
=&\Delta_A(ab).
\end{align*}
(\ref{it:b}) It is the dual of Item~(\ref{it:a}).
\end{proof}

\subsection{The dual of weighted infinitesimal bialgebras}
In this subsection, we propose the concept of a weighted coderivation and prove that the dual of an $\epsilon$-bialgebra of weight $\lambda$ is also an $\epsilon$-bialgebra of weight $\lambda$.

\begin{defn}\cite{ZGZ18}
Let $\lambda$ be a given element of $\bfk$, and let $(A, \mu)$ be an algebra and $A\ot A$ an $(A, A)$-bimodule.
Then a linear map $D:A\rightarrow A\ot A$ is called a {\bf derivation of weight $\lambda$} of $A$ if it satisfies Eq.~(\ref{eq:wib}), that is,
\begin{align*}
D \mu = s (\id_A \ot D)+ t(D\ot \id_A)+\lambda (\id_A \ot \id_A),
\end{align*}
where $s: A\ot (A\ot A)\rightarrow A\ot A $ and $t:(A\ot A) \ot A \rightarrow A\ot A$ are the bimodule structure maps.
\end{defn}

Dually, we propose the concept of a weighted coderivation.

\begin{defn}\label{def:coder}
Let $\lambda$ be a given element of $\bfk$, and let $C$ be a coalgebra and $C\ot C$ a $(C, C)$-bicomodule.
Then a map $D:C\ot C \rightarrow C$ is called a {\bf coderivation of weight $\lambda$} if it satisfies
\begin{align*}
\Delta D =(\id_C\ot D)s + (D\ot \id_C)t +\lambda(\id_C\ot \id_C),
\end{align*}
where $s:C\ot C \rightarrow C\ot (C\ot C) $ and $t:C\ot C \rightarrow (C\ot C)\ot C$ are the bicomodule structure maps.
Here we view $C\ot C$ as a $(C,C)$-bicomodule via
\begin{equation}
s=\Delta\ot \id_C\, \text{ and }\, t=\id_C\ot \Delta.
\label{eq:cdota}
\end{equation}
\end{defn}

\begin{remark}
\begin{enumerate}
\item The classical derivation is a derivation of weight zero and the classical coderivation~\cite{Aguu02} is coderivation of weight zero.
\item We emphasize that the main difference between classical bialgebras and weighted $\epsilon$-bialgebras is the compatibility condition about the coproduct $\Delta$ and the multiplication $\mu$. More precisely, $\Delta$ is an algebra morphism in classical bialgebras, however, $\Delta$ is a weighted derivation in weighted $\epsilon$-bialgebras.
\end{enumerate}
\end{remark}

In classical bialgebras, the coproduct $\Delta$ is a morphism of algebras, which is equivalent to the  multiplication $\mu$ is a morphism of coalgebras. In weighted $\epsilon$-bialgebras, an analogous result is given by the following proposition.

\begin{prop}\label{prop:dual1}
Let $(A, \mu)$ be an algebra and $(A, \Delta)$ a coalgebra. Then the following assertions are equivalent:
\begin{enumerate}
\item $\Delta: A\to A\ot A$ is a derivation of weight $\lambda$.
\item  $\mu: A\ot A \to A$ is a coderivation of weight $\lambda$.
\end{enumerate}
Here $A\ot A$ is viewed as an $(A, A)$-bimodule and an $(A, A)$-bicomodule
by Eq.~(\ref{eq:dota}) and Eq.~(\ref{eq:cdota}), respectively.
\end{prop}

\begin{proof}
Note that the compatibility condition in Eq.~(\ref{eq:wib}) can be written as
\begin{align}
\Delta \mu =(\mu\ot \id_A)(\id_A \ot \Delta)+(\id_A\ot \mu)(\Delta\ot \id_A)+\lambda (\id_A \ot \id_A).
\label{eq:cocycle1}
\end{align}
Eq.~(\ref{eq:cocycle1}) implies that $\Delta: A\rightarrow A\ot A$ is a weighted derivation of the algebra $(A, \mu)$ with values in the $A$-bimodule $A\ot A$, in other words, $\mu: A\ot A\rightarrow A$ is a weighted coderivation from the $A$-bicomodule $A\ot A$ with values in the coalgebra $(A, \Delta)$.
\end{proof}

\begin{theorem}\label{thm:dual}
Let $(A, \mu, \Delta)$ be a $\epsilon$-bialgebra of weight $\lambda$ with $A^*\ot A^*\cong (A \ot A)^*$ as modules.
Then the dual space $A^*$ is an $\epsilon$-bialgebra of weight $\lambda$ with the multiplication
\begin{align*}
A^*\ot A^*\cong (A \ot A)^*\stackrel{\Delta^*}{\longrightarrow} A^*
\end{align*}
and the coproduct
\begin{align*}
A^*\stackrel{m^*}{\longrightarrow}(A\ot A)^*\cong A^*\ot A^*.
\end{align*}
\end{theorem}
\begin{proof}
By Proposition~\ref{prop:dual1}, the derivation of weight $\lambda$ and the coderivation of weight $\lambda$ correspond to each other by duality. Then the result follows from the classical finite dual between algebras and coalgebras.
\end{proof}

\begin{remark}
\begin{enumerate}
\item Let $(A, \mu, \Delta)$ be an $\epsilon$-bialgebra of weight $\lambda$. Then $(A, -\mu, \Delta)$ and $(A, \mu, -\Delta)$ are two $\epsilon$-bialgebras of weight $-\lambda$, and $(A, -\mu, -\Delta)$ is an $\epsilon$-bialgebra of weight $\lambda$.

\item Let $(A, \mu, \Delta)$ be an $\epsilon$-bialgebra of weight zero.  Let $\mu^{op}=\mu\tau$, $\Delta^{cop}=\tau\Delta$. Then $(A, \mu^{op}, \Delta^{cop})$ is an $\epsilon$-bialgebra of weight zero~\cite{Aguu02}.
\end{enumerate}
\end{remark}
\section{Weighted infinitesimal Hopf modules}\label{sec:wei}
In this section, we introduce the concept of weighted infinitesimal Hopf modules. We show that any $A$-module carries a natural structure of weighted $\epsilon$-unitary Hopf module over $A$ when $A$ is a weighted quasitriangular $\epsilon$-unitary bialgebra.

\subsection{The basic definitions and  examples}

Now we propose the concept of a weighted infinitesimal Hopf module.
\begin{defn}\label{def:ihmodule}
Let $\lambda$ be a given element of $\bfk$ and $(A, \mu, \Delta)$  an $\epsilon$-bialgebra of weighted $\lambda$. A { \bf (left) infinitesimal Hopf module}  of {\bf weight $\lambda$} is a $\bfk$-module $M$ equipped with a left $A$-module structure $\gamma:A\ot M\rightarrow M, a\ot m\mapsto am$ and a left $A$-comodule structure $\Lambda:M\rightarrow A\ot M, m\mapsto \sum_{(m)}m_{(-1)}\ot m_{(0)}$, such that
\begin{align}
\Lambda \gamma= (\mu \ot \id_M)(\id_A \ot \Lambda)+(\id_A \ot \gamma)(\Delta \ot \id_M)+\lambda (\id_A \ot \id_M), \label{eq:Hmodule}
\end{align}
that is
\begin{align}
\Lambda(am)=a\Lambda(m)+\Delta(a)m+\lambda (a\ot m). \label{eq:Hmodule1}
\end{align}
If further $(A, \mu, 1, \Delta)$  is an $\epsilon$-unitary bialgebra of weighted $\lambda$, then $M$ is called a {\bf (left) infinitesimal unitary Hopf module}  {\bf of weight $\lambda$}.
\end{defn}
\begin{remark}
\begin{enumerate}

\item In terms of above notations, the compatibility condition of weighted $\epsilon$-Hopf module given in Eq.~(\ref{eq:Hmodule1}) may be written as
\begin{align*}
\sum_{(m)}(am)_{(-1)}\ot (am)_{(0)}=\sum_{(m)}am_{(-1)}\ot m_{(0)}+\sum_{(a)}a_{(1)}\ot a_{(2)}\cdot m+\lambda a\ot m,
\end{align*}
which is different from the compatibility condition of classical Hopf module
\begin{align*}
\sum_{(m)}(am)_{(-1)}\ot (am)_{(0)}=\sum_{(m), (a)}a_{(1)}m_{(-1)}\ot a_{(2)}\cdot m_{(0)}.
\end{align*}

\item The left infinitesimal Hopf module studied by Aguiar~\cite{Aguu02} is a left infinitesimal Hopf module of weight zero.
\end{enumerate}
\end{remark}
The weighted $\epsilon$-Hopf module admits an analogy to the classical Hopf module over the ordinary Hopf algebras. The basic examples of classical Hopf modules adapted from~\cite[Section 1.9]{Mon93} also admit the following weighted infinitesimal Hopf versions in the context of weighted $\epsilon$-bialgebras. We refer to~\cite{Aguu02} for the results in the case of weight zero.

\begin{exam}\label{exam:exmo}
Let $\lambda$ be a given element of $\bfk$ and $(A, \mu, \Delta)$  an $\epsilon$-bialgebra of weighted $\lambda$.
\begin{enumerate}
\item $A$ itself is an $\epsilon$-Hopf module of weight $\lambda$ by taking $\gamma=\mu$ and $\Lambda=\Delta$, according to the definition of $\epsilon$-bialgebra of weight $\lambda$.
\item \label{it:b} Let $V$ be a vector space. Then $A\ot V$ is an  $\epsilon$-Hopf module of weight $\lambda$ by setting
\begin{align}
\gamma=\mu\ot \id_V: A\ot A \ot V\rightarrow A\ot V \, \text{ and }\, \Lambda=\Delta\ot \id_V: A\ot V\rightarrow A\ot A\ot V.
\label{eq:exam1}
\end{align}
In fact, for any $a\ot v\in A\ot V$, we have
\begin{align*}
(\id_A \ot \Lambda)\Lambda(a\ot v)=&\ (\id_A \ot (\Delta\ot \id_V))(\Delta\ot \id_V)(a\ot v)\quad (\text{by Eq.~(\ref{eq:exam1})})\\
=&\ (\id_A \ot (\Delta\ot \id_V))(\Delta(a)\ot v)
=\sum_{(a)}(\id_A \ot (\Delta\ot \id_V))(a_{(1)}\ot a_{(2)} \ot v)\\
=&\sum_{(a)}a_{(1)}\ot \Delta (a_{(2)}) \ot v
=\sum_{(a)}\Delta (a_{(1)})\ot a_{(2)} \ot v  \ (\text{by the coassociative law})\\
=&\sum_{(a)}(\Delta\ot \id_{A\ot V}) (a_{(1)}\ot a_{(2)} \ot v)=(\Delta\ot \id_{A\ot V}) (\Delta(a) \ot v)\\
=&\ (\Delta\ot \id_{A\ot V})\Lambda(a\ot v).
\end{align*}
Then the coassociative law is desired. Next we can check that
\begin{align*}
\Lambda \gamma(a\ot b \ot v)=&\ (\Delta\ot \id_V) (\mu\ot \id_V) (a\ot b \ot v)=\Delta(ab)\ot v\quad (\text{by Eq.~(\ref{eq:exam1})})\\
=&\ \left(a\cdot \Delta(b)+\Delta(a)\cdot b+\lambda (a\ot b)\right)\ot v \quad (\text{by Eq.~(\ref{eq:wib})})\\
=&\ (\mu \ot  \id_{A\ot V})(\id_A \ot \Delta \ot \id_v)(a\ot b\ot v)\\
&+(\id_A\ot \mu \ot \id_V)(\Delta  \ot \id_{A\ot V})(a\ot b\ot v)
+\lambda (\id_A \ot \id_{A\ot V})(a\ot b\ot v)\\
=&\ (\mu \ot \id_{A\ot V})(\id_A \ot \Lambda)(a\ot b\ot v)
+(\id_A\ot \gamma)(\Delta  \ot \id_{A\ot V})(a\ot b\ot v)\\
&+\lambda (\id_A \ot \id_{A\ot V})(a\ot b\ot v).
\end{align*}
Thus the compatibility condition of weighted $\epsilon$-Hopf module given in Eq.~(\ref{eq:Hmodule}) is satisfied.
\end{enumerate}
\end{exam}

\subsection{Modules and weighted infinitesimal Hopf modules}
In this subsection, we show that when $A$ is a weighted quasitriangular $\epsilon$-unitary bialgebra, any $A$-module carries a natural structure of weighted $\epsilon$-unitary Hopf module over $A$.
We first give some basic definitions and notations  that will be used in this section. We refer to~\cite[Section~5]{MA}
for the classical results in the case of $\lambda=0$.

\begin{defn} Let $\lambda$ be a given element of $\bfk$, and let $A$ be a unitary algebra and $W$ an $(A, A)$-bimodule.
\begin{enumerate}
\item A linear map $\Delta_{r} :A\rightarrow W$ associated to an element $r\in W$ is called {\bf principal} if it satisfies
    \begin{align}
    \Delta_{r}(a) :=a\cdot r- r\cdot a-\lambda (a\ot 1) \, \text{ for }\, a\in A. \label{eq:prin}
    \end{align}

\item An element $r\in W$ is called {\bf $A$-invariant} if it satisfies
\begin{align*}
a\cdot r=r\cdot a \, \text{ for }\, a\in A.
\end{align*}
\end{enumerate}
\end{defn}

For an algebra $A$, $A\ot A\ot A$ is viewed as an $A$-bimodule via
\begin{align*}
a\cdot (b\ot c\ot d)=ab\ot c\ot d \, \text{ and }\, (b\ot c\ot d)\cdot a =b\ot c\ot da,
\end{align*}
where $a, b, c, d\in A$.

\begin{remark}
 The $\Delta_{r}$ defined by Eq.~(\ref{eq:prin}) is a derivation of weight $\lambda$.
\end{remark}

 We now recall the definition of a weighted quasitriangular $\epsilon$-unitary bialgebra.
Let $A$ be a unitary algebra and $r=\sum_i u_i \ot v_i\in A\ot A$.
Then the principle derivation of weight $\lambda$
$$\Delta_r: A\rightarrow A\ot A,\quad a\mapsto a\cdot r - r\cdot a - \lambda (a\ot 1)$$
is coassociative if and only if the element
\begin{align*}
r_{13}r_{12}-r_{12}r_{23}+r_{23}r_{13}-\lambda r_{13}\in A\ot A\ot A
\end{align*}
is $A$-invariant~\cite[Theorem~4.5]{ZGZ18}. Having this fact in hand, our previous work shows that that the quadruple $(A, \mu, 1, \Delta_r)$ is an $\epsilon$-unitary bialgebra of weight $\lambda$ if $r$ is a solution of an AYBE of weight $\lambda$ in $A$~\cite[Theorem~4.6]{ZGZ18}. We refer to the
quadruple $(A,\mu,1,r)$ as a weighted quasitriangular $\epsilon$-unitary bialgebra, more precisely,

\begin{defn}\cite[Definition 5.1]{ZGZ18}
Let $(A, \mu, 1)$ be a unitary algebra. A {\bf quasitriangular infinitesimal unitary bialgebra} {\bf of weight $\lambda$} is a quadruple $(A, m, 1, r)$ consisting of a unitary algebra  $(A, m, 1)$ and a solution $r\in A\ot A$  of an associative Yang-Baxter equation of weight $\lambda$.
\label{def:qua}
\end{defn}

Recall that $\Delta_{r}$ is defined in Eq.~(\ref{eq:prin}) for a $r\in A\ot A$.

\begin{remark}
\begin{enumerate}
\item The quadruple $(A, \mu, 1,\Delta_r)$ is indeed an $\epsilon$-unitary bialgebra of weight $\lambda$.

\item A quasitriangular $\epsilon$-bialgebra studied in~\cite{MA} is a quasitriangular $\epsilon$-unitary bialgebra of weight zero. In this case, the quadruple $(A, \mu, 1, \Delta_r)$ is  an $\epsilon$-unitary bialgebra of weight zero.
\item \cite{ZGZ18} Let $(A, \mu, 1, r)$ be a quasitriangular $\epsilon$-unitary bialgebra of weight $\lambda$ with $r=\sum_{i}u_i\ot v_i\in A\ot A$. Define  two binary operations $\succ, \prec$ on $A$ by
\begin{align*}
a\succ b:=\sum_{i}u_iav_ib \, \text { and }\, a\prec b:=\sum_{i}au_ibv_i-\lambda ab.
\end{align*}
Then the triple $(A, \succ, \prec )$ is a dendriform algebra.
\end{enumerate}
\end{remark}

We need the following lemma.

\begin{lemma}\cite[Proposition 5.3]{ZGZ18}
Let $(A, \mu, 1,  r)$ be a quasitriangular $\epsilon$-unitary bialgebra of weight $\lambda$ and $\Delta :=\Delta_r$. Then
\begin{align}\label{eq:a}
&\Delta(a)=a\cdot r-r\cdot a-\lambda (a\ot 1),\\ \label{eq:b}
&(\Delta\ot \id )(r)=-r_{23}r_{13}, \, \text{ and }\, \\ \label{eq:c}
&(\id \ot \Delta)(r)=r_{13}r_{12}-\lambda (r_{13}+r_{12}).
\end{align}
Conversely, if an $\epsilon$-unitary bialgebra $(A, \mu, 1,\Delta)$ of weight $\lambda$ satisfies Eqs.~(\ref{eq:a}), (\ref{eq:b}) and (\ref{eq:c})
for some $r = \sum_i u_i\ot v_i \in A\ot A$, then $(A, \mu, 1, r)$ is a quasitriangular $\epsilon$-unitary bialgebra of weight $\lambda$ and $\Delta=\Delta_r$.
\label{lem:quaiff}
\end{lemma}

We now state our main result in this subsection.
\begin{theorem}\label{thm:main}
Let $(A, \mu, 1, r)$ be a quasitriangular $\epsilon$-unitary bialgebra of weight $\lambda$ and $M$ a left $A$-module. Then $M$ becomes a left $\epsilon$-unitary Hopf module of weight $\lambda$ over $A$ with the $\Lambda: M \rightarrow A \ot M$ given by
\begin{align*}
\Lambda(m):=-\sum_{i}u_i \ot v_im \, \text{ for }\, m\in M.
\end{align*}
\end{theorem}

\begin{proof}
We first prove the coassociative law:
\begin{align*}
(\id_A \ot \Lambda)\Lambda (m)=(\Delta_r\ot \id_A) \, \text{ for }\, m\in M.
\end{align*}
Let $r=\sum_{i}u_i \ot v_i$. Then by Lemma~\ref{lem:quaiff}, we have
\begin{align*}
(\Delta_r \ot \id_A)(r)=\sum_{i}\Delta_r(u_i)\ot v_i=-r_{23}r_{13}=-\sum_{i,j}u_i\ot u_j\ot v_jv_i.
\end{align*}
Thus
\begin{align*}
(\Delta_r \ot \id_A)\Lambda (m)=&-\sum_{i}\Delta_r(u_i)\ot v_im=\sum_{i,j}u_i\ot u_j\ot v_jv_im\\
=&-\sum_{i}u_i \ot \Lambda(v_im)=(\id_A \ot \Lambda)\Lambda (m).
\end{align*}
We then check the compatibility condition of the weighted $\epsilon$-unitary Hopf module in Eq.~(\ref{eq:Hmodule1}). Since $\Delta_{r}(a)=a\cdot r- r\cdot a-\lambda (a\ot 1)$, we have
\begin{align*}
a\Lambda(m)+\Delta_r(a)m=&-\sum_{i}au_i \ot v_im+\sum_{i}au_i \ot v_im-\sum_{i}u_i \ot v_iam-\lambda (a\ot m)\\
=&-\sum_{i}u_i \ot v_iam-\lambda (a\ot m)\\
=&\Lambda(am)-\lambda (a\ot m).
\end{align*}
This completes the proof.
\end{proof}

\section{Infinitesimal unitary bialgebras of  rooted forests}
\label{sec:infbi}
In this section, we first recall the concepts of planar rooted forests~\cite{Sta97} and decorated planar rooted forests~\cite{Foi02, Guo09}. We then give a new
way to decorate planar rooted forests that generalises the ones introduced and studied in~\cite{Foi02, Guo09, ZGG16}.
We also define a coproduct on our decorated planar rooted forests to equip them with a coalgebraic structure, with an eye toward constructing an $\epsilon$-unitary bialgebra on them.

\subsection{Decorated planar rooted forests}
\label{ss:dprfor}
A $\mathbf{rooted\ tree}$ is a finite graph, connected and without cycles, with a special vertex called the $\mathbf{root}$. A $\mathbf{planar\ rooted\ tree}$  is a rooted tree with a fixed embedding into the plane.
The first few planar rooted trees are listed below:
$$\tun,\ \tdeux,\ \ttroisun,\ \tquatreun,\ \ttroisdeux,\  \tquatretrois,\ \tquatredeux,\ \tquatrequatre,\ \tquatrecinq, \  \tcinqdeux,$$
where the root of a tree is on the bottom~\cite{Sta97}.
Let $\calt$ denote the set of planar rooted trees and $M(\calt)$ the free monoid generated by $\calt$ with the concatenation product, denoted by $\mul$ and usually suppressed. The empty tree in $M(\calt)$ is denoted by $\etree$.
An element in $M(\calt)$, called a {\bf planar rooted forest}, is a noncommutative concatenation of planar rooted trees, denoted by $F=T_1\cdots T_n$ with $T_1, \ldots, T_n \in \calt$. Here we use the convention that $F=\etree$ when $n=0$. The first few planar rooted forests are listed below:
$$\etree,\ \,\tun,\ \,\tun\tun,\ \,\tdeux,\ \, \tdeux\tun,\ \,\tun \tdeux,\ \, \tun\ttroisun,\ \,\tdeux\tun\tun,\ \,\tun \tdeux \tun,\ \,\ttroisun\ttroisdeux.$$

We now elaborate on a new decoration on planar rooted forests motivated from~\cite{ZGG16}.
Let $\Omega$ be a nonempty set and $X$ a set whose elements are not in $\Omega$.
Denote by
$\calt(X\sqcup \Omega)$ (resp.~$\calf(X\sqcup \Omega):=M(\calt(X\sqcup \Omega))$)
the set of planar rooted trees (resp.~forests) whose vertices (leaf and internal vertices) are decorated by elements of $X\sqcup \Omega$.

Let $\rts$ (resp.~$\rfs$) denote the subset of $\calt(X\sqcup \Omega)$ (resp.~$\calf(X\sqcup \Omega)$) consisting of vertex decorated planar rooted trees (resp. forests) whose internal vertices are decorated by elements of $\Omega$ exclusively and  leaf vertices are decorated by elements of $X\sqcup \Omega$. In other words, all internal vertices, as well as possibly some of the leaf vertices, are decorated by $\Omega$. If a tree has only one vertex, the vertex is treated as a leaf vertex. Here are some examples in $\rts$:
$$\tdun{$\alpha$},\ \, \tdun{$x$},\ \, \tddeux{$\alpha$}{$\beta$},\ \,  \tddeux{$\alpha$}{$x$}, \ \, \tdtroisun{$\alpha$}{$\beta$}{$\gamma$},\ \,\tdtroisun{$\alpha$}{$x$}{$\gamma$}, \ \,\tdtroisun{$\alpha$}{$x$}{$y$}, \ \, \tdquatretrois{$\alpha$}{$\beta$}{$\gamma$}{$\beta$},\ \, \tdquatretrois{$\alpha$}{$\beta$}{$\gamma$}{$x$}, \ \, \tdquatretrois{$\alpha$}{$\beta$}{$x$}{$y$};$$
whereas, the following are some examples not in $\rts$:
$$\tddeux{$x$}{$\alpha$},\ \,  \tddeux{$x$}{$y$}, \ \, \tdtroisun{$x$}{$\beta$}{$\alpha$}, \ \, \tdquatretrois{$\alpha$}{$x$}{$\gamma$}{$\beta$},$$
where $x, y \in X$ and $\alpha,\beta,\gamma\in \Omega$.

Let us emphasize that the decorated planar rooted forests $\hrts$ considered here are very general, which
include the undecorated planar rooted forests in the noncommutative Connes-Kreimer Hopf algebra~\cite{Hol03},
the ones in the Foissy-Holtkamp Hopf algebra~\cite{Foi02} and the ones in~\cite{ZGG16}. See Remark~\ref{re:3ex} below.

\begin{remark}
Now we give some special cases of our decorated planar rooted forests.
\begin{enumerate}
\item If $X=\emptyset$ and $\Omega$ is a singleton set, then all decorated planar rooted forests in $\mathcal{F}(X, \Omega)$ have the same decoration and so
can be identified with the planar rooted forests without decorations, which is the object studied in the well-known Foissy-Holtkamp Hopf algebra---the noncommutative version of Connes-Kreimer Hopf algebra~\cite{Foi02, Hol03}.

\item If $X = \emptyset$, then $\mathcal{F}(X, \Omega)$ is the object employed in the decorated Foissy-Holtkamp Hopf algebra~\cite{Foi02}. \label{it:2ex}

\item If $\Omega$ is a singleton set, then $\rfs$ was introduced and studied in~\cite{ZGG16} to construct a cocycle Hopf algebra on decorated planar rooted forests.
\item The rooted forests in $\rfs$, whose leaf vertices are decorated by elements of $X$ and internal vertices are decorated by elements of $\Omega$, are introduced in~\cite{Guo09}. However, this decoration can't deal with the unity and the algebraic structures on this decorated rooted forests are all nonunitary.
\end{enumerate}
\label{re:3ex}
\end{remark}

Define
\begin{align*}
\hrts:= \bfk \rfs=\bfk M(\rts)
\end{align*}
to be the free $\bfk$-module spanned by $\rfs$.
For each  $\omega\in \Omega$, let
$$B^+_\omega:\hrts\to \hrts$$
to be the linear grafting operation by taking $\etree$ to $\bullet_\omega$ and sending a rooted forest in $\hrts$ to its grafting with the new root decorated by $\omega$. For example,
$$B_{\omega}^{+}(\etree)=\tdun{$\omega$} \ ,\ \  B_{\omega}^{+}(\tdun{$x$}\tddeux{$\alpha$}{$y$})=\tdquatretrois{$\omega$}{$\alpha$}{$y$}{$x$},\ \  B_{\omega}^{+}(\tddeux{$\beta$}{$\alpha$}\tdun{$x$})=
\tdquatredeux{$\omega$}{$x$}{$\beta$}{$\alpha$},$$
where $\alpha, \beta, \omega\in \Omega$ and $x, y \in X$. Note that $\hrts$ is closed under the concatenation $m_{RT}$.

For $F=T_1\cdots T_{n}\in \rfs$ with $n\geq 0$ and $T_1,\cdots,T_{n}\in \rts$, we define $\bre(F):=n$
to be the {\bf breadth} of $F$. Here we use the convention that $\bre(\etree) = 0$ when $n=0$.
In order to define the depth of a decorated planar rooted forests, we build a recursive structure on $\rfs$.
Denote $\bullet_{X}:=\{\bullet_{x}\mid x\in X\}$ and set
\begin{align*}
\calf_0:=M(\bullet_{X})=S(\bullet_{X})\sqcup \{\etree\},
\end{align*}
where $M(\bullet_{X})$ (resp.~$S(\bullet_{X})$) is the submonoid (resp.~subsemigroup) of $\rfs$ generated by $\bullet_{X}$.
Here we are abusing notion slightly since $M(\bullet_{X})$ (resp.~$S(\bullet_{X})$) is also isomorphic to the free monoid
(resp. semigroup) generated by $\bullet_{X}$.
Suppose that $\calf_n$ has been defined for an $n\geq 0$, then define
\begin{align*}
\calf_{n+1}:=M(\bullet_{X}\sqcup (\sqcup_{\omega\in \Omega} B_{\omega}^{+}(\calf_n))).
\end{align*}
Thus we obtain $\calf_n\subseteq \calf_{n+1}$ and
\begin{align}
\rfs = \lim_{\longrightarrow} \calf_n=\bigcup_{n=0}^{\infty}\calf_n.
\label{eq:rdeff}
\end{align}
Now elements $F\in \calf_n\setminus \calf_{n-1}$ are said to have {\bf depth} $n$, denoted by $\dep(F) = n$.
Here are some examples:
\begin{align*}
\dep(\etree) =&\ \dep(\bullet_x) =0,\ \dep(\bullet_\omega)=\dep(B^+_{\omega}(\etree)) = 1,\  \dep(\tddeux{$\omega$}{$\alpha$})= \dep(B^+_{\omega}(B^+_{\alpha}(\etree))) =2, \\
\dep(\tdun{$x$}\tddeux{$\omega$}{$y$}\tdun{$y$}) =&\ \dep(\tddeux{$\omega$}{$y$})=
\dep(B^+_{\omega}(\bullet_y)) =1, \ \dep(\tdtroisun{$\omega$}{$x$}{$\alpha$}) = \dep(B^+_{\omega}(B^+_{\alpha}(\etree) \bullet_x)) = 2,
\end{align*}
where $\alpha,\omega\in \Omega$ and $x, y \in X$.

\subsection{A new coproduct on decorated planar rooted forests}
In this subsection, we construct a new coproduct $\col$ on $\hrts$, leading to an $\epsilon$-unitary bialgebra of weight zero on $\hrts$.
It suffices to define $\col(F)$ for basis elements $F\in \rfs$ by induction on $\dep(F)$.
For the initial step of $\dep(F)=0$, we define
\begin{equation}
\col(F) :=
\left\{
\begin{array}{ll}
0 & \text{ if } F = \etree, \\
\etree \ot \etree & \text{ if } F = \bullet_x \text{ for some } x \in X,\\
\bullet_{x_{1}}\cdot \col(\bullet_{x_{2}}\cdots\bullet_{x_{m}})+ \col(\bullet_{x_{1}}) \cdot (\bullet_{x_{2}}\cdots\bullet_{x_{m}})
& \text{ if }  F=\bullet_{x_{1}}\cdots \bullet_{x_{m}} \text{ with } m\geq 2 \text{ and } x_i\in X.
\end{array}
\right .
\label{eq:dele}
\end{equation}
Here in the third case, the definition of $\col$ reduces to the induction on breadth and the left and right actions are given
in Eq.~(\ref{eq:dota}).

For the induction step of $\dep(F)\geq 1$, we reduce the definition to induction on breadth.
If $\bre(F) = 1$, we
write $F=B_{\omega}^{+}(\overline{F})$ for some $\omega\in \Omega$ and $\overline{F} ,\overline{F}  \in \rfs$, and define
\begin{equation}
\col(F):=\col B_{\omega}^{+}(\overline{F}) := \overline{F} \otimes \etree + (\id\otimes B_{\omega}^{+})\col(\overline{F}).
\label{eq:dbp}
\end{equation}
In other words
\begin{align}
\col B_{\omega}^{+}= \mathrm{id} \otimes \etree + (\id\otimes B_{\omega}^{+})\col.
\label{eq:cdbp}
\end{align}
We call Eq.~(\ref{eq:dbp}) or equivalently Eq.~(\ref{eq:cdbp}) the {\bf infinitesimal 1-cocycle condition}.
If $\bre(F) \geq 2$, we write $F=T_{1}T_{2}\cdots T_{m}$ for some $m\geq 2$ and $T_1, \ldots, T_m \in \rts$, and define
\begin{equation}
\col(F)=T_{1}\cdot \col(T_{2}\cdots T_{m})+\col(T_{1})\cdot (T_{2}\cdots T_{m}).
\label{eq:dele1}
\end{equation}

Let us expose some examples for better insight into $\col$.
\begin{exam}\label{exam:cop}
Let $x,y \in X$ and $\alpha, \beta, \omega\in \Omega$. Then
\begin{align*}
\col(\tdun{$\alpha$})&= \col( B_{\alpha}^{+}(\etree)) =\etree\ot \etree, \\
\col(\tddeux{$\alpha$}{$x$})&= \col( B_{\alpha}^{+}(\tdun{$x$})) =\tdun{$x$}\ot \etree + \etree\ot \tdun{$\alpha$},\\
\col(\tdun{$\beta$}\tddeux{$\alpha$}{$x$})&=\tdun{$\beta$} \cdot \col( \tddeux{$\alpha$}{$x$}) +\col(\tdun{$\beta$}) \cdot \tddeux{$\alpha$}{$x$} = \tdun{$\beta$}\tdun{$x$}\ot \etree+\tdun{$\beta$}\ot \tdun{$\alpha$}+\etree\ot \tddeux{$\alpha$}{$x$},\\
\col(\tdquatretrois{$\omega$}{$\alpha$}{$x$}{$\beta$})&=\col(B_{\omega}^{+}(\tdun{$\beta$}\tddeux{$\alpha$}{$x$}))
=\tdun{$\beta$}\tddeux{$\alpha$}{$x$}\ot \etree+\tdun{$\beta$}\tdun{$x$}\ot \tdun{$\omega$}+\tdun{$\beta$}\ot \tddeux{$\omega$}{$\alpha$}+ \etree\ot \tdtroisdeux{$\omega$}{$\alpha$}{$x$}.
\end{align*}
\end{exam}
\begin{exam}
Foissy~\cite{Foi09} studied  an $\epsilon$-Hopf algebra on (undecorated) planar rooted forests, using a different coproduct $\Delta$ given by
\begin{align*}
\Delta(F) :=
\left\{
\begin{array}{ll}
\etree \ot \etree & \text{ if } F = \etree, \\
F \ot \etree+ (\id \ot B^+)\Delta(\overline{F}) & \text{ if } F = B^+(\overline{F}),\\
F_1 \cdot \Delta(F_2)+ \Delta(F_1) \cdot F_2 - F_1\ot F_2
& \text{ if }  F=F_1F_2.
\end{array}
\right .
\end{align*}
Then
\begin{align*}
\Delta(\tun) =& \tun\ot \etree + \etree \ot \tun,\\
\Delta(\tdeux)=&\tdeux\ot \etree +\tun \ot \tun +\etree \ot \tdeux,\\
\Delta(\tun \tdeux)=&\tun \tdeux \ot \etree+ \tun \tun \ot \tun + \tun\ot \tdeux  + \etree\ot \tun \tdeux, \\
\Delta(\tquatretrois)=&\tquatretrois \ot \etree + \tun \tdeux \ot \tun  +\tun\tun\ot \tdeux + \tun \ot \ttroisdeux +\etree \ot\tquatretrois,
\end{align*}
which are different from the corresponding ones suppressed decorations in Example~\ref{exam:cop}.
\end{exam}

\begin{remark}
The infinitesimal 1-cocycle condition in~Eq.~(\ref{eq:dbp}) is different from the 1-cocycle condition employed in~\cite{CK98} given by
\begin{equation}
\Delta (F)=\Delta B^{+}(\overline{F})= F \otimes \etree + (\id\otimes B^{+})\Delta(\overline{F})\,\text{ for }\, F=B^+(\overline{F})\in \calt.
\label{eq:usuco}
\end{equation}
\end{remark}

\subsection{A combinatorial description of $\col$}
Next we give a combinatorial description of the coproduct $\Delta_{\epsilon}$. Denote by $V(F)$ the set of vertices of a forest $F$.
Foissy erected several order relations on $V(F)$ of a planar rooted forest $F$ and
proposed the concept of biideals of $F$~\cite{Foi02, Foi09},
which can be utilized to our case.

\begin{defn}
Let $F = T_1 \cdots T_n\in \rfs$ with $T_1, \ldots, T_n\in \rts$ and $n\geq 0$,
and let $u,v\in V(F)$.
\begin{enumerate}
\item $u\leqh v$ ({\bf being higher}) if there exists a (directed) path from $u$ to $v$ in $F$ and the edges of $F$ are oriented from roots to leaf vertices.
\item $u\leql v$ ({\bf being more on the left}) if $u$ and $v$ are not comparable for $\leqh $ and one of the following assertions is satisfied:
   \begin{enumerate}
   \item  $u$ is a vertex of $T_i$ and $v$ is a vertex of $T_j$, with $1\leq j< i \leq n$.
   \item $u$ and $v$ are vertices of the same $T_i$, and $u\leql v$ in the forest obtained from $T_i$ by deleting its root.
    \end{enumerate}
\item $u\lhl v$ ({\bf being higher or more on the left}) if $u\leqh v$ or $u\leql v$.
\item A set $I\subseteq V(F)$ is called a {\bf biideal} of $F$, denoted by $I \fid F$, if
$$u\in I\,\text{ and }\, v\in V(F)\,\text{ such that }\, u\lhl v \Rightarrow v\in I.$$
If the biideal $I \neq V(F)$, then $I$ is called a {\bf proper biideal} of $F$, denoted by $I \sfid F$.
\label{it:orderd}
\end{enumerate}
\label{def:order}
\end{defn}

The empty set $\emptyset$ is a biideal of $F$.
Any proper biideal $I$ of $F$ can't contain the root of the right-most tree $T_n$ in $F$,
as the root of the right-most tree $T_n$ is the minimum vertex in $V(F)$ with respect to $\lhl$.

\begin{exam}
Let $F=\tdtroisun{$\alpha$}{$\gamma$}{$\beta$}$. The following arrays give the order relations $\leqh $ and $\leql $ for the vertices of $F$, respectively.
The symbol $\times$ means that the vertices are not comparable for the order.
\begin{table}[H]
\begin{tabular}{|c|c|c|c|}
  \hline
   $\leqh$ & $\bullet_\alpha$& $\bullet_\beta$ & $\bullet_\gamma$ \\
  \hline
   $\bullet_\alpha$ & $=$ & $\leqh$ & $\leqh$\\
  \hline
  $\bullet_\beta$ & $\geq_{{\rm h}}$ & = & $\times$  \\
  \hline
  $\bullet_\gamma$ & $\geq_{{\rm h}}$ & $\times$  & $=$  \\
  \hline
\end{tabular}\quad \quad \quad \quad
\begin{tabular}{|c|c|c|c|}
  \hline
   $\leql$ & $\bullet_\alpha$& $\bullet_\beta$ & $\bullet_\gamma$ \\
  \hline
   $\bullet_\alpha$ & $=$ & $\times$ & $\times$\\
  \hline
  $\bullet_\beta$ & $\times$ & = & $\geq_{{\rm l}}$  \\
  \hline
  $\bullet_\gamma$ & $\times$ & $\leql$  & $=$  \\
  \hline
\end{tabular}
\end{table}
\noindent Then $\bullet_\alpha\lhl \bullet_\gamma\lhl \bullet_\beta$ and  $\tdtroisun{$\alpha$}{$\gamma$}{$\beta$}$ has three proper biideals:
$\emptyset,\, \{\bullet_\beta\} \,\text{ and }\,  \{\bullet_\beta,\bullet_\gamma\}.$
\end{exam}

\begin{remark}
As in~\cite[Sec.~2.1]{Foi09}, for $F\in \rfs$,
\begin{enumerate}
\item the order $\leqh $ is a partial order on $V(F)$, whose Hasse graph is the decorated planar rooted forest $F$;
\item the order $\leql$ is a  partial order on $V(F)$ and the order $\lhl$ is a total order on $V(F)$.
\end{enumerate}
\end{remark}

The proper biideals of $F$ can be characterized precisely as

\begin{lemma}
Let $F\in \rfs$ and let $u_n\lhl \cdots \lhl u_1 $ be its vertices.
Then $F$ has exactly $n$ proper biideals:
\begin{align*}
I_k=\{u_1, \ldots, u_{k-1}\}\, \text{ for }\, k = 1, \ldots, n,
\end{align*}
with the convention that $I_1 = \emptyset$.
\label{lem:comid}
\end{lemma}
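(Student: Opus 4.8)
The plan is to recognize that the notion of biideal is exactly that of an up-set for the total order $\lhl$, and then to carry out the elementary classification of up-sets of a finite chain. The only nontrivial ingredient — that $\lhl$ is a total order on $V(F)$ — has already been recorded in the Remark preceding the statement, so no serious obstacle remains; the work is purely bookkeeping.

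First I would unwind Definition~\ref{def:order}\,(d): a subset $I\subseteq V(F)$ is a biideal precisely when it is upward closed for $\lhl$, that is, $u\in I$ and $u\lhl v$ imply $v\in I$. Since $\lhl$ is a total order (by the cited Remark), the hypothesis $u_n\lhl\cdots\lhl u_1$ means $u_1$ is the $\lhl$-maximum and $u_n$ the $\lhl$-minimum, and for indices $i<j$ one has $u_j\lhl u_i$. With this in hand, I would check that each $I_k=\{u_1,\dots,u_{k-1}\}$ (with $I_1=\emptyset$), for $k=1,\dots,n$, is a biideal: if $u_i\in I_k$, so $i\le k-1$, and $u_i\lhl u_j$, then $j<i\le k-1$, whence $u_j\in I_k$. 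Each $I_k$ is proper because $u_n\notin I_k$, and the $I_k$ are pairwise distinct since they have distinct cardinalities $0,1,\dots,n-1$.

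Next I would prove these exhaust the proper biideals. Let $I\fid F$ with $I\neq V(F)$. If $I=\emptyset$, then $I=I_1$. Otherwise let $m$ be the largest index with $u_m\in I$. Because $I$ is upward closed and $u_i\lhl u_m$ for every $i<m$, we get $u_1,\dots,u_{m-1}\in I$, hence $\{u_1,\dots,u_m\}\subseteq I$; by maximality of $m$ no vertex $u_j$ with $j>m$ lies in $I$, so $I=\{u_1,\dots,u_m\}=I_{m+1}$. Finally $I\neq V(F)=\{u_1,\dots,u_n\}$ forces $m\le n-1$, so $I\in\{I_2,\dots,I_n\}$. Combining with the empty case, the proper biideals of $F$ are exactly $I_1,\dots,I_n$, and they are $n$ in number, as claimed.

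The step I expect to be the "main obstacle" is essentially a non-issue: everything reduces to the fact that $\lhl$ is a total order, which makes up-sets form a single increasing chain of terminal segments. I would therefore keep the argument short, presenting the two inclusions (each $I_k$ is a proper biideal; every proper biideal equals some $I_k$) as above, and citing Definition~\ref{def:order} and the preceding Remark for the two facts used.
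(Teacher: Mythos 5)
Your argument is correct, and it is essentially the intended one, but it is worth noting that the paper does not actually write out a proof at all: it simply says the statement is proved exactly as \cite[Lemma~13]{Foi09}. So your self-contained version is a genuinely useful addition rather than a duplicate. The substance is the same in both cases: once one knows that $\lhl$ is a total order on $V(F)$ (the Remark you cite), a biideal is precisely an up-set of a finite chain, and the up-sets of an $n$-element chain are the $n+1$ terminal segments, of which $n$ are proper. Your two-inclusion presentation (each $I_k$ is a proper biideal; every proper biideal is some $I_k$) is clean and complete, and correctly isolates the only nontrivial input as the totality of $\lhl$.

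One small slip to fix: in the exhaustiveness step you write that ``$u_i \lhl u_m$ for every $i<m$,'' which contradicts the convention you set up earlier (for $i<j$ one has $u_j \lhl u_i$, so for $i<m$ it is $u_m \lhl u_i$). The conclusion you draw --- $u_m\in I$ together with upward closure forces $u_1,\dots,u_{m-1}\in I$ --- is the right one and requires exactly the corrected inequality $u_m\lhl u_i$, so this is a typo in the direction of the relation rather than a gap in the logic; just reverse it.
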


\begin{proof}
It is the same as the proof of~\cite[Lemma~13]{Foi09}.
\end{proof}

For any set $I\subseteq V(F)$, let $F_{|I}$ be the derived decorated planar rooted subforest whose vertices are $I$ and edges are the edges of $F$ between elements of $I$. In particular,
\begin{equation}
F_{|\emptyset} := \etree \,\text{ and }\, \etree_{|I}:=0.
\label{eq:efoz}
\end{equation}

\begin{remark}\label{rk:cut}
Let $F=B_{\omega}^{+}(\overline F)$ for some $\omega\in \Omega$ and $\overline{F}\in \rfs$ and $I\subseteq V(\overline F)$. Then
$$B_{\omega}^{+}({\overline F}_{|I})= B_{\omega}^{+}(\overline F)_{| (I\sqcup \{\bullet_\omega\})},$$
illustrated graphically as below:
\begin{align*}
\overline {F}=\begin{tikzpicture}[baseline,scale=0.8]
\draw (0,0) circle (0.5);
\node at (0,0) {$\overline F$};
\end{tikzpicture},\ \
{\overline F}_{|I}=\begin{tikzpicture}[baseline,scale=0.8]
\draw (4.5,-0.5) arc (270:90:0.5)--+(0,-1);
\node at (4.25,0) {$I$};
\end{tikzpicture},\ \ \ \
B_{\omega}^{+}({\overline F}_{|I})=\begin{tikzpicture}[baseline, scale=0.8]
\draw (4.5,-0.5) arc (270:90:0.5)--+(0,-2);
\node at (4.25,0) {$I$};
\fill (4.5,-1.5) circle (2pt) node[right]{$\omega$};
\end{tikzpicture},
B_{\omega}^{+}(\overline F)=\begin{tikzpicture}[baseline,scale=0.8]
\draw (3,0) circle (0.5);
\draw (3,0.5)--+(0,-2);
\node at (2.75,0) {$I$};
\fill (3,-1.5) circle (2pt) node[right]{$\omega$};
\end{tikzpicture},\ \
B_{\omega}^{+}(\overline F)_{| (I\sqcup \{\bullet_\omega\})}=\begin{tikzpicture}[baseline,scale=0.8]
\draw (4.5,-0.5) arc (270:90:0.5)--+(0,-2);
\node at (4.25,0) {$I$};
\fill (4.5,-1.5) circle (2pt) node[right]{$\omega$};
\end{tikzpicture}.
\end{align*}
\end{remark}

\begin{lemma}
Let $F=B_{\omega}^{+}(\overline F)$ for some $\omega\in \Omega$ and $\overline{F}\in \rfs$, and let $I \subseteq V(\overline F)$. Then
$I\fid \overline F$ if and only if $I\fid F$.
\label{lem:bfid}
\end{lemma}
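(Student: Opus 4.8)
The plan is to reduce the statement to a single structural fact about the order $\lhl$: passing from $\lbar F$ to $F=B_{\omega}^{+}(\lbar F)$ adds exactly one new vertex, the new root $\bullet_\omega$, which becomes the minimum of $V(F)$ for the total order $\lhl$, while the order $\lhl$ restricted to the old vertex set $V(\lbar F)\subseteq V(F)$ coincides with the order $\lhl$ of $\lbar F$. Granting this, the definition of a biideal (upward closure under $\lhl$) transfers immediately in both directions, because the only vertex of $F$ not already in $V(\lbar F)$, namely $\bullet_\omega$, lies strictly below every old vertex and so can never be forced into $I$ by an element of $I\subseteq V(\lbar F)$.

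First I would record this order comparison. Write $V(F)=V(\lbar F)\sqcup\{\bullet_\omega\}$. For $u,v\in V(\lbar F)$, a directed path in $F$ (edges oriented from roots to leaves) joining $u$ to $v$ cannot begin at or pass through the root $\bullet_\omega$, so $u\leqh v$ in $F$ iff $u\leqh v$ in $\lbar F$; and since $F$ is a single tree with root $\bullet_\omega$ and $\lbar F$ is precisely the forest obtained from $F$ by deleting that root, Definition~\mref{def:order} gives $u\leql v$ in $F$ iff $u\leql v$ in $\lbar F$. Hence $u\lhl v$ in $F$ iff $u\lhl v$ in $\lbar F$ for all $u,v\in V(\lbar F)$. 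Moreover $F$ is connected with root $\bullet_\omega$, so there is a directed path from $\bullet_\omega$ to every vertex, whence $\bullet_\omega\leqh v$, hence $\bullet_\omega\lhl v$, for every $v\in V(\lbar F)$; in particular no $u\in V(\lbar F)$ satisfies $u\lhl\bullet_\omega$. This is the already-noted fact that the root of the right-most (here unique) tree of $F$ is the $\lhl$-minimum of $V(F)$.

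With this in hand, both implications are one-line unwindings of the definition. For $I\fid\lbar F\Rightarrow I\fid F$: given $u\in I$ and $v\in V(F)$ with $u\lhl v$, the case $v=\bullet_\omega$ is excluded by minimality, so $v\in V(\lbar F)$, then $u\lhl v$ in $\lbar F$, whence $v\in I$; thus $I\fid F$. For $I\fid F\Rightarrow I\fid\lbar F$: given $u\in I$ and $v\in V(\lbar F)$ with $u\lhl v$ in $\lbar F$, the comparison also holds in $F$, and $v\in V(\lbar F)\subseteq V(F)$, so $v\in I$ because $I\fid F$; thus $I\fid\lbar F$.

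I expect no genuine obstacle; the only point requiring care is the invariance statement of the second step, i.e. that grafting a new root below $\lbar F$ alters neither $\leqh$ nor $\leql$ among the old vertices and makes the new root $\lhl$-minimal. As an alternative route, once that is established one could instead quote Lemma~\mref{lem:comid}: listing the vertices of $\lbar F$ as $u_n\lhl\cdots\lhl u_1$, the vertices of $F$ are $\bullet_\omega\lhl u_n\lhl\cdots\lhl u_1$, and the biideals of $F$ contained in $V(\lbar F)$ are exactly $\emptyset,\{u_1\},\ldots,\{u_1,\ldots,u_{n-1}\}$ together with $V(\lbar F)$ itself, which are precisely the biideals of $\lbar F$.
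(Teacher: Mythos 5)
Your proof is correct and follows essentially the same route as the paper's, which simply cites Definition~\mref{def:order}~(\mref{it:orderd}) together with the fact that the new root $\bullet_\omega$ is the $\lhl$-minimum of $V(F)$; you have merely spelled out the details (that $\lhl$ restricted to $V(\lbar F)$ agrees with $\lhl$ on $\lbar F$, and that minimality prevents $\bullet_\omega$ from ever being forced into $I$).
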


\begin{proof}
It follows from Definition~\ref{def:order}~(\ref{it:orderd}) and the fact that the root $\bullet_\omega$ is minimum in $V(F)$ with respect to $\lhl$.
\end{proof}

\begin{theorem}
With the notations in Lemma~\ref{lem:comid}, we have
\begin{align}
\Delta_{\epsilon}(F) = \sum_{I_k \sfid F} F_{|I_k} \ot F_{|\overline{I}_k} = \sum_{k=1}^n F_{|I_k} \ot F_{|\overline{I}_k},
\label{eq:comb}
\end{align}
where $\overline{I}_k:= V(F)\setminus (I_k \sqcup\{u_k\}) = \{u_{k+1}, \ldots, u_n\}$ for $k=1, \ldots, n$, with the convention that $\overline{I}_n = \emptyset$.
\label{thm:comb}
\end{theorem}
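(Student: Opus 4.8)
The plan is to prove the second equality in~(\mref{eq:comb}); the first is immediate, since Lemma~\mref{lem:comid} says that $I_1,\dots,I_n$ are \emph{all} the proper biideals of $F$. I would argue by induction on $\dep(F)$ and, for a fixed depth, by induction on $\bre(F)$, so that the three inductive cases match the three clauses~(\mref{eq:dele}), (\mref{eq:dbp}) and~(\mref{eq:dele1}) defining $\col$. Two combinatorial facts should be recorded first. \emph{(a)} If $F=T_1\cdot G$, where $T_1$ is the left-most tree of $F$ and $G=T_2\cdots T_m$ is the forest of the remaining trees, then by Definition~\mref{def:order} every vertex of $T_1$ is $\lhl$-larger than every vertex of $G$; consequently the $\lhl$-ordered vertex list of $F$ is that of $T_1$ followed by that of $G$, the proper biideals of $F$ are the proper biideals of $T_1$ together with the sets $V(T_1)\sqcup J$ with $J$ a proper biideal of $G$, and $F_{|I}=(T_1)_{|I\cap V(T_1)}\cdot G_{|I\cap V(G)}$ for every $I\subseteq V(F)$. \emph{(b)} If $F=B^+_\omega(\lbar F)$, then the new root $\bullet_\omega$ is the $\lhl$-minimum of $V(F)$, so no proper biideal of $F$ contains it; hence, by Lemma~\mref{lem:bfid}, the proper biideals of $F$ are exactly the proper biideals $I_1,\dots,I_{n-1}$ of $\lbar F$ together with $I_n=V(\lbar F)$, and for every $I\subseteq V(\lbar F)$ one has $F_{|I}=\lbar F_{|I}$ and $F_{|(I\sqcup\{\bullet_\omega\})}=B^+_\omega(\lbar F_{|I})$ (the latter being Remark~\mref{rk:cut}).

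The base of the induction is $\dep(F)=0$, $\bre(F)\le 1$: then $F=\etree$, so $n=0$ and both sides of~(\mref{eq:comb}) vanish, or $F=\bullet_x$ with $x\in X$, so $n=1$, $I_1=\lbar{I}_1=\emptyset$, and both sides equal $\etree\ot\etree$ by Eq.~(\mref{eq:efoz}). For the grafting case $\bre(F)=1$, $\dep(F)\ge 1$, write $F=B^+_\omega(\lbar F)$ with $\dep(\lbar F)<\dep(F)$. By fact~\emph{(b)}, the $k=n$ summand on the right-hand side of~(\mref{eq:comb}) is $F_{|V(\lbar F)}\ot F_{|\emptyset}=\lbar F\ot\etree$; for $k\le n-1$ we have $I_k\subseteq V(\lbar F)$, hence $F_{|I_k}=\lbar F_{|I_k}$, and $\bullet_\omega\in\lbar{I}_k$, hence $F_{|\lbar{I}_k}=B^+_\omega\big(\lbar F_{|(\lbar{I}_k\setminus\{\bullet_\omega\})}\big)$, where $\lbar{I}_k\setminus\{\bullet_\omega\}$ is exactly the complement of $I_k\sqcup\{u_k\}$ inside $V(\lbar F)$. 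Applying the induction hypothesis to $\lbar F$ gives
\[
\sum_{k=1}^n F_{|I_k}\ot F_{|\lbar{I}_k}=\lbar F\ot\etree+(\id\ot B^+_\omega)\col(\lbar F),
\]
which is Eq.~(\mref{eq:dbp}).

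For the concatenation case $\bre(F)=m\ge 2$, write $F=T_1\cdot G$ as in fact~\emph{(a)} (for $\dep(F)=0$ this reads $\bullet_{x_1}\cdot(\bullet_{x_2}\cdots\bullet_{x_m})$), and split $\sum_{k=1}^n$ according to whether the cut vertex $u_k$ lies in $V(T_1)$ or in $V(G)$. By fact~\emph{(a)}, in the first range the $I_k$ are the proper biideals of $T_1$ and $F_{|I_k}\ot F_{|\lbar{I}_k}=(T_1)_{|I_k}\ot\big((T_1)_{|\lbar{I}_k}\cdot G\big)$ (with $\lbar{I}_k$ the complement inside $T_1$), while in the second range $I_k=V(T_1)\sqcup J$ with $J$ a proper biideal of $G$ and $F_{|I_k}\ot F_{|\lbar{I}_k}=\big(T_1\cdot G_{|J}\big)\ot G_{|\lbar{J}}$. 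Using the bimodule actions of Eq.~(\mref{eq:dota}) and the induction hypothesis for $T_1$ and for $G$ (each of breadth $<m$ and depth $\le\dep(F)$),
\[
\sum_{k=1}^n F_{|I_k}\ot F_{|\lbar{I}_k}=\col(T_1)\cdot G+T_1\cdot\col(G),
\]
which is Eq.~(\mref{eq:dele1}) (and, for $\dep(F)=0$, the $m\ge 2$ clause of Eq.~(\mref{eq:dele})). This completes the induction.

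The substantive part of the argument is not any individual step but the bookkeeping behind facts~\emph{(a)} and~\emph{(b)}: one must check that cutting $F$ at a vertex of $T_1$, at a vertex of $G$, or at the new root $\bullet_\omega$ produces precisely the claimed tensor factors, and that the index sets that occur are exactly the proper biideals of $T_1$, of $G$ and of $\lbar F$ as enumerated by Lemma~\mref{lem:comid}. I would therefore establish facts~\emph{(a)} and~\emph{(b)} carefully first --- they follow from Definition~\mref{def:order}, Lemma~\mref{lem:bfid} and Remark~\mref{rk:cut} --- after which the comparison of $\col$ with the right-hand side of~(\mref{eq:comb}) reduces to a routine term-by-term matching of finite sums.
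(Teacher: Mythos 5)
Your proposal is correct and follows essentially the same route as the paper: a double induction whose three cases track the three defining clauses of $\col$, with the grafting case handled via Lemma~\mref{lem:bfid} and Remark~\mref{rk:cut} and the concatenation case handled by splitting the proper biideals of $T_1\cdot G$ into those contained in $V(T_1)$ and those of the form $V(T_1)\sqcup J$ (the paper inducts on $|V(F)|$ rather than $\dep(F)$ as the outer variable, and inlines your facts \emph{(a)} and \emph{(b)} rather than isolating them, but these are cosmetic differences). Note only that you have swapped the labels ``first'' and ``second'': it is the second equality of~(\mref{eq:comb}) that is immediate from Lemma~\mref{lem:comid}, while what you actually prove --- that $\col(F)$ equals the sum over $k$ --- is the substantive first equality.
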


\begin{proof}
The second equality follows from Lemma~\ref{lem:comid}.
It is sufficient to prove the first equality for basis elements $F\in \rfs$ by induction on $n:= |V(F)|$.
If $n=0$, then $F=\etree$ and $F_{|I_k} = \etree_{|I_k} = 0$ by Eq.~(\ref{eq:efoz}). It follows from Eq.~(\ref{eq:dele}) that
$$\col(F)=\col(\etree)=0 = \sum_{I_k \sfid F} F_{|I_k} \ot F_{|\overline{I}_k}.$$

Assume that the result holds for $n<m$ for a $m\geq 1$ and consider the case of $n=m$.
In this case, we use induction on breadth $\bre(F)$.
Since $F\in \rfs$ with $|V(F)| = n\geq 1$, we have $F\neq \etree$ and $\bre(F)\geq 1$.
If $\bre(F) =1$, we have two cases to consider.

\noindent{\bf Case 1.} $F=\bullet_{x}$ for some $x\in X$. In this case, $F$ has only one proper biideal $\emptyset$.
By Eqs.~(\ref{eq:dele}) and~(\ref{eq:efoz}),
$$\col(F)=\col(\bullet_{x})=\etree\ot \etree =  F_{|\emptyset}\ot F_{|\emptyset} = \sum_{I_k \sfid F} F_{|I_k} \ot F_{|\overline{I}_k}.$$

\noindent{\bf Case 2.} $F = B_{\omega}^{+}(\overline{F})$ for some $\overline{F} \in \rfs$ and $\omega\in \Omega$.
By Lemma~\ref{lem:comid}, the proper biideals of $\overline F$ are $I_k$ for $k=1, \ldots, n-1$.
Then
\begin{align*}
\col(F)&= \col\Big(B_{\omega}^{+}(\overline{F})\Big) = \overline{F}\ot \etree+(\id\ot B_{\omega}^{+})\col(\overline{F})  \quad (\text{by Eq.~(\ref{eq:dbp})})\\
&= \overline{F}\ot \etree+(\id\ot B_{\omega}^{+}) \left(\sum_{I_{k}\sfid \overline {F}} \overline{F}_{|I_{k}}\ot \overline{F}_{|\overline{I}_{k}}\right) \quad (\text{by the induction hypothesis})\\
&=F_{|V(\overline{F})}\ot F_{|\emptyset}+\sum_{I_{k}\sfid \overline {F}} \overline{F}_{|I_{k}}\ot B_{\omega}^{+}( \overline{F}_{|\overline{I}_{k}})\\
&=F_{|V(\overline{F})}\ot F_{|\emptyset} +\sum_{I_{k}\sfid \overline {F}} \overline{F}_{|I_{k}}\ot B_{\omega}^{+}\big(\overline{F}_{|V(\overline{F})\setminus (I_{k}\sqcup\{u_{k}\})}\big)\\
&=F_{|V(\overline{F})}\ot F_{|\emptyset} + \sum_{I_{k}\sfid F, I_{k}\neq V(\overline F)} \overline{F}_{|I_{k}}\ot B_{\omega}^{+}\big(\overline{F}_{|V(\overline{F})\setminus (I_{k}\sqcup\{u_{k}\})}\big)\quad(\text{by Lemma~\ref{lem:bfid} })\\
&=F_{|V(\overline{F})}\ot F_{|\emptyset}+\sum_{I_{k}\sfid F, I_{k}\neq V(\overline F)}\overline{F}_{|I_{k}}\ot {B_{\omega}^{+}(\overline{F})}_{|(V(\overline{F})\setminus (I_{k}\sqcup\{u_{k}\}))\sqcup \{\bullet_\omega\}} \quad(\text{by Remark~\ref{rk:cut}}) \\
&=F_{|V(\overline{F})}\ot F_{|\emptyset}+\sum_{I_{k}\sfid F, I_{k}\neq V(\overline F)} {F}_{|I_{k}}\ot F_{|V(F)\setminus (I_{k}\sqcup\{u_{k}\})}\\
&= \sum_{I_k \sfid F} F_{|I_k} \ot F_{|\overline{I}_k}.
\end{align*}

Assume that the result holds for $n=m$ and $\bre(F) <k$, and consider the case of $n=m$ and $\bre(F) =k\geq 2$.
Then we may write $F=TF' $ for some $T \in \rts$ and $F' \in \rfs$.
On the one hand,
\begin{align*}
\col(F)=& \col(T F' )=\ T \cdot \col(F' )+\col(T )\cdot F' \quad (\text{by Eq.~(\ref{eq:dele1})})\\
=& T \cdot \sum_{I_{k_2}\sfid  F' } F'_{|I_{k_2}}\ot F'_{|\overline{I}_{k_2}}+\Big(\sum_{I_{k_1}\sfid  T} T_{|I_{k_1}}\ot T_{|\overline{I}_{k_1}}\Big)\cdot F' \quad (\text{by the induction hypothesis})\\
=& \sum_{I_{k_2}\sfid  F' }T F'_{|I_{k_2}} \ot F'_{|\overline{I}_{k_2}}+\sum_{I_{k_1}\sfid  T} T_{|I_{k_1}}\ot T_{|\overline{I}_{k_1}} F' \quad (\text{by Eq.~(\ref{eq:dota})}).
\end{align*}
On the other hand,
\allowdisplaybreaks{
\begin{align*}
\sum_{I_k \sfid F} F_{|I_k} \ot F_{|\overline{I}_k}&=\sum_{\substack{ I_{k}\sfid  TF',\\ I_k\cap V(F' )\neq \emptyset}} F_{|I_k}\ot F_{|\overline{I}_k}+\sum_{\substack{I_k\sfid  TF',\\ I_k\cap V(F' )=\emptyset}} F_{|I_k}\ot F_{|\overline{I}_k}\\
&=\sum_{\substack{I_k=V(T)\sqcup I_{k_2},\\ \emptyset \neq I_{k_2}\sfid  F'}} F_{|I_k}\ot F_{|\overline{I}_k}+\bigg(\sum_{I_k\sfid  T } F_{|I_k}\ot F_{|\overline{I}_k}+\sum_{I_k=V(T) } F_{|I_k}\ot F_{|\overline{I}_k}\bigg)\\
&=\sum_{\substack{ I_k=V(T)\sqcup I_{k_2},\\ I_{k_2}\sfid  F'}} F_{|I_k}\ot F_{|\overline{I}_k}+\sum_{I_k\sfid  T } F_{|I_k}\ot F_{|\overline{I}_k}\\
&=\sum_{\substack{ I_k=V(T)\sqcup I_{k_2},\\ I_{k_2}\sfid  F'}} F_{|I_k}\ot F_{|V(F)\setminus (I_k \sqcup \{u_k\})}+\sum_{I_k\sfid  T } F_{|I_k}\ot F_{|V(F)\setminus (I_k\sqcup \{u_k\})} \\
&=\sum_{I_{k_2}\sfid  F'} F_{|V(T)\sqcup I_{k_2}}\ot F_{|V(F)\setminus (V(T)\sqcup I_{k_2} \sqcup \{u_{k_2}\})}+\sum_{I_k\sfid  T } F_{|I_k}\ot F_{|(V(T)\setminus (I_k\sqcup \{u_k\}))\sqcup V(F')}\\
&=\sum_{I_{k_2}\sfid  F'} TF'_{|I_{k_2}}\ot F'_{|V(F')\setminus (I_{k_2} \sqcup \{u_{k_2}\})} +\sum_{I_k\sfid  T } F_{|I_k}\ot T_{|V(T)\setminus (I_k\sqcup \{u_k\})}F'\\
&=\sum_{I_{k_2}\sfid  F' }T F'_{|I_{k_2}}\ot F'_{|\overline{I}_{k_2}}+\sum_{I_{k}\sfid  T} T_{|I_{k}}\ot T_{|\overline{I}_{k}} F'.
\end{align*}
}
Hence Eq.~(\ref{eq:comb}) holds. This completes the induction on breadth and so the induction on $|V(F)|$.
\end{proof}

\begin{exam}
\begin{enumerate}
\item \label{exam:a}
Consider $F=\tdun{$\beta$}\tddeux{$\alpha$}{$x$}$. Then $\bullet_\alpha \lhl \bullet_x\lhl \bullet_\beta$.
By Lemma~\ref{lem:comid}, $F$ has three proper biideals $\emptyset$, $\{\bullet_\beta\}$ and $\{\bullet_\beta, \bullet_x\}$. So by Theorem~\ref{thm:comb},
\begin{align*}
\col(\tdun{$\beta$}\tddeux{$\alpha$}{$x$})&=F_{|\emptyset}\ot F_{|V(F)\setminus \{\bullet_\beta\}}
+F_{|\{\bullet_\beta\}}\ot F_{|V(F)\setminus (\{\bullet_\beta\}\sqcup\{\bullet_x\})}
+F_{|\{\bullet_\beta, \bullet_x\}}\ot F_{|V(F)\setminus( \{\bullet_\beta, \bullet_x\}\sqcup\{ \bullet_\alpha\})}\\
&=F_{|\emptyset}\ot F_{| \{\bullet_x, \bullet_\alpha\}}
+F_{|\{\bullet_\beta\}}\ot F_{| \{\bullet_\alpha\}}
+F_{|\{\bullet_\beta, \bullet_x\}}\ot F_{|\emptyset}\\
&=\etree\ot \tddeux{$\alpha$}{$x$}+\tdun{$\beta$}\ot \tdun{$\alpha$}+ \tdun{$\beta$}\tdun{$x$}\ot \etree.
\end{align*}

\item \label{exam:b}
Let $F=\tdquatretrois{$\omega$}{$\alpha$}{$x$}{$\beta$}$. Then $\bullet_\omega \lhl \bullet_\alpha\lhl \bullet_{x}\lhl \bullet_{\beta}$ and so $F$ has four proper biideals $\emptyset$, $\{\bullet_\beta\}$, $\{\bullet_\beta, \bullet_x\}$ and $\{\bullet_\beta, \bullet_x, \bullet_\alpha\}$ by Lemma~\ref{lem:comid}. It follows from Theorem~\ref{thm:comb} that
\begin{align*}
\col(\tdquatretrois{$\omega$}{$\alpha$}{$x$}{$\beta$})=&F_{|\emptyset}\ot F_{|V(F)\setminus \{\bullet_\beta\}}+F_{|\{\bullet_\beta\}}\ot F_{|V(F)\setminus (\{\bullet_\beta\}\sqcup\{\bullet_x\})}+F_{|\{\bullet_\beta, \bullet_x\}}\ot F_{|V(F)\setminus (\{\bullet_\beta, \bullet_x\}\sqcup\{\bullet_\alpha\})}\\
&+F_{|\{\bullet_\beta, \bullet_x,\bullet_\alpha \}}\ot F_{|V(F)\setminus (\{\bullet_\beta, \bullet_x,\bullet_\alpha \}\sqcup\{\bullet_\omega\})}\\
=&F_{|\emptyset}\ot F_{|\{\bullet_x,\bullet_\alpha,\bullet_\omega\}}+F_{|\{\bullet_\beta\}}\ot F_{| \{\bullet_\alpha, \bullet_\omega\}}+F_{|\{\bullet_\beta, \bullet_x\}}\ot F_{|\{\bullet_\omega\}}
+F_{|\{\bullet_\beta, \bullet_x,\bullet_\alpha \}}\ot F_{|\emptyset}\\
=&\etree\ot \tdtroisdeux{$\omega$}{$\alpha$}{$x$}+\tdun{$\beta$}\ot \tddeux{$\omega$}{$\alpha$}+\tdun{$\beta$}\tdun{$x$}\ot \tdun{$\omega$}+ \tdun{$\beta$}\tddeux{$\alpha$}{$x$}\ot \etree.
\end{align*}
\end{enumerate}
Observe that the results in~(\ref{exam:a}) and (\ref{exam:b}) are consistent with the ones in Example~\ref{exam:cop}.
\end{exam}

\subsection{Infinitesimal unitary bialgebras on decorated planar rooted forests}
This subsection is devoted to an $\epsilon$-unitary bialgebra of weight zero on $\hrts$.
We first record some lemmas for a preparation.

\begin{lemma}
Let $F_1, F_2\in \hrts$. Then $\col(F_1 F_2) = F_1 \cdot \col(F_2) + \col(F_1) \cdot F_2$.
\label{lem:colff}
\end{lemma}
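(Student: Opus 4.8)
The plan is to prove the identity $\col(F_1 F_2) = F_1 \cdot \col(F_2) + \col(F_1) \cdot F_2$ for basis elements $F_1, F_2 \in \rfs$ and then extend by bilinearity; since both sides are $\bfk$-bilinear in $(F_1, F_2)$, it suffices to treat the case where $F_1$ and $F_2$ are forests in $\rfs$. The natural induction is on the breadth $\bre(F_1)$ of the left factor, keeping $F_2$ arbitrary. I would first dispose of the degenerate case $F_1 = \etree$: here $\col(\etree) = 0$ by Eq.~(\mref{eq:dele}), and the left-action convention $\etree \cdot \col(F_2) = \col(F_2)$ together with $F_1 F_2 = F_2$ makes both sides equal to $\col(F_2)$.

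Next comes the base case $\bre(F_1) = 1$, i.e.\ $F_1 = T$ is a single tree in $\rts$. Here I would split according to whether $F_2 = \etree$ (then $F_1 F_2 = T$ and the claim reads $\col(T) = T\cdot\col(\etree) + \col(T)\cdot\etree = \col(T)$, using $\col(\etree)=0$ and the right-action convention) or $F_2 \neq \etree$, so $\bre(F_2)\geq 1$. In the latter situation $T F_2$ is a forest of breadth $\geq 2$ whose first tree is $T$, so Eq.~(\mref{eq:dele1}) applies directly and gives $\col(T F_2) = T \cdot \col(F_2) + \col(T) \cdot F_2$, which is exactly the claim. One subtlety: if $F_2 = \bullet_{x_1}\cdots\bullet_{x_m}$ lies in $\calf_0$, the relevant breadth-reduction formula is the third clause of Eq.~(\mref{eq:dele}) rather than Eq.~(\mref{eq:dele1}); but these two formulas have the same shape (concatenation with the first tree split off), so the argument is uniform. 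Similarly, if $\dep(F_1) = 0$ and $\dep(F_2)=0$ one uses Eq.~(\mref{eq:dele}) throughout. The definitions were arranged precisely so that "split off the leftmost tree" holds whenever $\bre \geq 2$, regardless of depth.

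For the inductive step, suppose $\bre(F_1) = k \geq 2$ and write $F_1 = T F_1'$ with $T \in \rts$ and $F_1' \in \rfs$ of breadth $k-1$. Then $F_1 F_2 = T (F_1' F_2)$ has breadth $\geq 2$ (its leftmost tree is $T$), so by the breadth-reduction formula (Eq.~(\mref{eq:dele1}), or Eq.~(\mref{eq:dele}) in the all-leaves case),
\begin{align*}
\col(F_1 F_2) &= T \cdot \col(F_1' F_2) + \col(T) \cdot (F_1' F_2)\\
&= T \cdot \big(F_1' \cdot \col(F_2) + \col(F_1') \cdot F_2\big) + \col(T) \cdot (F_1' F_2)\\
&= (T F_1') \cdot \col(F_2) + \big(T \cdot \col(F_1') + \col(T) \cdot F_1'\big) \cdot F_2\\
&= F_1 \cdot \col(F_2) + \col(F_1) \cdot F_2,
\end{align*}
where the second equality is the induction hypothesis applied to $(F_1', F_2)$, the third uses associativity of the concatenation product $\mul$ and the $\bfk$-bimodule axioms for $\hrts \otimes \hrts$ from Eq.~(\mref{eq:dota}) (so that $T\cdot((F_1'\cdot\col(F_2))) = (TF_1')\cdot\col(F_2)$ and $(\col(T)\cdot F_1')\cdot F_2 = \col(T)\cdot(F_1' F_2)$), and the last equality is again the breadth-reduction formula applied in reverse to $\col(F_1) = \col(T F_1')$. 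This closes the induction.

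I do not expect a serious obstacle here; the statement is essentially a bookkeeping lemma asserting that the recursive definition of $\col$ via Eqs.~(\mref{eq:dele}), (\mref{eq:dbp}), (\mref{eq:dele1}) is coherent with the concatenation product. The only place demanding care is making sure the breadth-reduction clauses of Eqs.~(\mref{eq:dele}) and~(\mref{eq:dele1}) are invoked in exactly the right cases (depending on whether the forest lies in $\calf_0$ or has a nontrivial grafted tree), and that the left/right $\bfk$-bimodule conventions of Eq.~(\mref{eq:dota}) are used consistently when reassociating $T\cdot(F_1'\cdot(-))$ into $(TF_1')\cdot(-)$; both are routine. Notably, the infinitesimal $1$-cocycle step Eq.~(\mref{eq:dbp}) is never directly needed, because in the induction $F_1$ always has breadth $\geq 1$ and the leftmost-tree splitting never forces us to open up a $B^+_\omega$.
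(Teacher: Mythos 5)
Your proof is correct, but it follows a genuinely different route from the paper's. The paper deduces the lemma from the combinatorial description of $\col$ already established in Theorem~\mref{thm:comb}: it writes $\col(F_1F_2)=\sum_{I_k\sfid F_1F_2}(F_1F_2)_{|I_k}\ot (F_1F_2)_{|\lbar{I}_k}$ and splits the sum over proper biideals of $F_1F_2$ according to whether $I_k$ meets $V(F_2)$ (in which case $I_k\supseteq V(F_1)$ by the total order $\lhl$) or not (in which case $I_k\sfid F_1$), exactly as in the breadth $\geq 2$ case of the proof of Theorem~\mref{thm:comb}. You instead argue directly from the recursive definition by induction on $\bre(F_1)$, using only the leftmost-tree splitting clauses of Eqs.~(\mref{eq:dele}) and~(\mref{eq:dele1}), associativity of $\mul$, and the bimodule identities of Eq.~(\mref{eq:dota}); your base case and inductive step are both sound (in the inductive step $F_1'F_2$ is nonempty since $\bre(F_1')\geq 1$, so the breadth-reduction formula always applies to $T(F_1'F_2)$, and the final regrouping $T\cdot\col(F_1')+\col(T)\cdot F_1'=\col(F_1)$ is just the defining recursion read backwards). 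What each approach buys: yours is more elementary and self-contained, making the lemma independent of Theorem~\mref{thm:comb} and of the biideal machinery of Lemma~\mref{lem:comid}, and it makes transparent that the derivation property is hard-wired into the recursion; the paper's is shorter once the closed biideal formula is in hand, and it reuses a computation already carried out. Both are valid, and your observation that the cocycle clause Eq.~(\mref{eq:dbp}) is never needed is accurate.
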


\begin{proof}
Similar to the case of $\bre(F)\geq 2$ in the proof of Theorem~\ref{thm:comb}, we obtain
\begin{align*}
 \col(F_1F_2) &= \sum_{I_k \sfid F_1F_2} (F_1F_2)_{|I_k} \ot (F_1F_2)_{|\overline{I}_k} \\
 &= \sum_{\substack{ I_{k}\sfid  F_1 F_2,\\ I_k\cap V(F_2 )\neq \emptyset}} (F_1F_2)_{|I_k}\ot (F_1F_2)_{|\overline{I}_k}+\sum_{\substack{I_k\sfid  F_1F_2,\\ I_k\cap V(F_2) = \emptyset}} (F_1F_2)_{|I_k}\ot (F_1F_2)_{|\overline{I}_k}\\
&= F_1 \cdot \col(F_2) + \col(F_1) \cdot F_2,
\end{align*}
as required.
\end{proof}

\begin{lemma}
The pair $(\hrts, \col)$ is a coalgebra (without counit).
\label{lem:rt1}
\end{lemma}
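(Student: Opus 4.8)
The plan is to establish the coassociativity law $(\col\ot\id)\col=(\id\ot\col)\col$ on $\hrts$; by $\bfk$-linearity it is enough to check it on a basis element $F\in\rfs$, and I would argue by induction on the number of vertices $|V(F)|$, distinguishing cases according to the breadth $\bre(F)$. The small cases are immediate: if $F=\etree$ then $\col(\etree)=0$, so both sides vanish; and if $F=\bullet_x$ for some $x\in X$ then $\col(\bullet_x)=\etree\ot\etree$, whence $(\col\ot\id)(\etree\ot\etree)=\col(\etree)\ot\etree=0=\etree\ot\col(\etree)=(\id\ot\col)(\etree\ot\etree)$.

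For $\bre(F)=1$ and $\dep(F)\geq 1$, write $F=B^{+}_{\omega}(\lbar F)$ for the (unique) $\omega\in\Omega$ and $\lbar F\in\rfs$, noting $|V(\lbar F)|=|V(F)|-1$. Setting $\col(\lbar F)=\sum\lbar F_{(1)}\ot\lbar F_{(2)}$ and applying the infinitesimal $1$-cocycle condition Eq.~(\mref{eq:dbp}) — once to $F$ and then again to each $B^{+}_{\omega}(\lbar F_{(2)})$ — together with $\col(\etree)=0$, a short computation gives
\[
(\id\ot\col)\col(F)=\col(\lbar F)\ot\etree+(\id\ot\id\ot B^{+}_{\omega})\big[(\id\ot\col)\col(\lbar F)\big],
\]
and likewise $(\col\ot\id)\col(F)=\col(\lbar F)\ot\etree+(\id\ot\id\ot B^{+}_{\omega})\big[(\col\ot\id)\col(\lbar F)\big]$; these coincide by the induction hypothesis applied to $\lbar F$.

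The substantive case is $\bre(F)\geq 2$. I would factor $F=F_1F_2$ with $F_1,F_2\in\rfs$ nonempty (say $F_1$ the first tree and $F_2$ the concatenation of the rest), so $|V(F_1)|,|V(F_2)|<|V(F)|$, and then expand both $(\id\ot\col)\col(F)$ and $(\col\ot\id)\col(F)$ by using Lemma~\mref{lem:colff}, which gives $\col(F_1F_2)=F_1\cdot\col(F_2)+\col(F_1)\cdot F_2$, applying Lemma~\mref{lem:colff} a second time to the inner products $\col(F_1F_{2(1)})$ and $\col(F_{1(2)}F_2)$ that arise, and invoking the left and right actions of Eq.~(\mref{eq:dota}), extended to triple tensors on the outer factors. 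Both sides should then collapse to the same expression
\[
F_1\cdot\big[(\col\ot\id)\col(F_2)\big]\;+\;\sum_{(F_1),(F_2)}F_{1(1)}\ot F_{1(2)}F_{2(1)}\ot F_{2(2)}\;+\;\big[(\col\ot\id)\col(F_1)\big]\cdot F_2,
\]
where coassociativity on $F_1$ and on $F_2$ is furnished by the induction hypothesis. I expect the only real obstacle to be the bookkeeping here: one must keep precise track of which tensor slot each module action lands in, and recognise the ``mixed'' middle sum $\sum F_{1(1)}\ot F_{1(2)}F_{2(1)}\ot F_{2(2)}$ as literally the same term on both sides — the weight-zero shadow of the cancellations already performed in Proposition~\mref{prop:poly} and Proposition~\mref{prop:tenco}.

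As a consistency check one could instead derive the identity straight from the combinatorial formula of Theorem~\mref{thm:comb}: writing the vertices of $F$ as $u_n\lhl\cdots\lhl u_1$, and using that an induced subforest $F_{|I}$ on a biideal $I$ (or on a $\lhl$-down-set) again lies in $\rfs$ and inherits the total order $\lhl$ by restriction — so that Theorem~\mref{thm:comb} and Lemma~\mref{lem:comid} apply to it — one verifies that $(\col\ot\id)\col(F)$ and $(\id\ot\col)\col(F)$ both equal $\sum_{1\leq i<j\leq n}F_{|\{u_1,\dots,u_{i-1}\}}\ot F_{|\{u_{i+1},\dots,u_{j-1}\}}\ot F_{|\{u_{j+1},\dots,u_n\}}$. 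I would nonetheless carry out the inductive argument as the main line, since it avoids re-examining the order-theoretic behaviour of induced subforests.
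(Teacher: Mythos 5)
Your proposal is correct, but your main line of argument is genuinely different from the paper's. The paper proves coassociativity directly from the closed combinatorial formula of Theorem~\mref{thm:comb}: writing the vertices as $u_n\lhl\cdots\lhl u_1$ and setting $[i,j]=\{u_i,\dots,u_j\}$, it computes both $(\col\ot\id)\col(F)$ and $(\id\ot\col)\col(F)$ and shows each equals $\sum_{k=2}^{n}\sum_{i=1}^{k-1}F_{|[1,i-1]}\ot F_{|[i+1,k-1]}\ot F_{|[k+1,n]}$ — exactly the triple sum you relegate to your ``consistency check.'' That route is short but, as you rightly note, it silently uses that Theorem~\mref{thm:comb} applies again to the restricted forests $F_{|[1,k-1]}$ and $F_{|[k+1,n]}$ with the inherited order and that restriction is transitive; the paper does not dwell on this. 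Your main line instead runs a structural induction on $|V(F)|$ off the recursive definition: the cocycle case $F=B^+_\omega(\lbar F)$ reduces cleanly to $\col(\lbar F)\ot\etree+(\id\ot\id\ot B^+_\omega)$ applied to the iterated coproduct of $\lbar F$ on both sides (your computation here is right, using $\col(\etree)=0$ to kill the $\lbar F\ot\col(\etree)$ term), and the product case $F=F_1F_2$ uses Lemma~\mref{lem:colff} twice so that both sides collapse to $F_1\cdot[(\col\ot\id)\col(F_2)]+\sum F_{1(1)}\ot F_{1(2)}F_{2(1)}\ot F_{2(2)}+[(\col\ot\id)\col(F_1)]\cdot F_2$ after invoking the induction hypothesis on $F_1$ and $F_2$; this checks out, and Lemma~\mref{lem:colff} is indeed available since the paper proves it beforehand (and it also follows from Eq.~(\mref{eq:dele1}) by a breadth induction if you want to stay independent of Theorem~\mref{thm:comb}). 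What each approach buys: the paper's is a one-shot index manipulation once the combinatorial description is trusted on subforests; yours is longer in bookkeeping but self-contained at the level of the defining recursions and makes the derivation-plus-cocycle structure of $\col$ do the work, in the same spirit as the weight-$\lambda$ computations of Propositions~\mref{prop:poly} and~\mref{prop:tenco}. The only point you leave implicit is that $\col(F)$ actually lands in $\hrts\ot\hrts$, which the paper records as the first line of its proof; this is immediate from either description and is not a gap.
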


\begin{proof}
First, it follows from Theorem~\ref{thm:comb} that
$$\col(F)\in \hrts\ot\hrts\,\text{ for }\, F\in \rfs.$$
So we are left to show the coassociative law:
\begin{equation*}
(\id\otimes \col)\col(F)=(\col\otimes \id)\col(F)\,\text{ for } F\in \rfs.
\end{equation*}
Suppose that $F$ has vertices: $u_n\lhl u_{n-1}\lhl \cdots \lhl u_1$.
For simplicity, for $1\leq i<j \leq n$, we denote by
\begin{align*}
[i,j]:= \{u_i, \ldots, u_j\}.
\end{align*}
In particular, for  $1\leq k\leq n$,
$$[1, k-1] = \{u_1, \ldots, u_{k-1} \} = I_k\,\text{ and }\,  [k+1,n]=\{u_{k+1}, \ldots,  u_n\}=\overline{I}_{k}.$$
On the one hand, it follows from Theorem~\ref{thm:comb} that
\begin{align*}
(\col\otimes \id)\col(F)=&\ (\col\otimes \id) \sum_{k=1}^n F_{|I_k} \ot F_{|\overline{I}_k}=\ (\col\otimes \id) \sum_{k=1}^n F_{|[1,k-1]} \ot F_{| [k+1,n]}\\
=&\  \sum_{k=1}^n\col( F_{|[1,k-1])} \ot F_{|[k+1,n]}=\ \sum_{k=1}^n \Big(\sum_{i=1}^{k-1}F_{|[1,i-1]}\ot F_{|[i+1, k-1]}\Big)\ot F_{| [k+1,n]}\\
=&\ \sum_{k=1}^n \sum_{i=1}^{k-1}F_{|[1,i-1]}\ot F_{|[i+1, k-1]}\ot F_{|[k+1,n]}=\ \sum_{k=2}^n \sum_{i=1}^{k-1}F_{|[1,i-1]}\ot F_{|[i+1, k-1]}\ot F_{|[k+1,n]}.
\end{align*}
On the other hand, again by Theorem~\ref{thm:comb},
\begin{align*}
(\id\otimes \col)\col(F)=&\ (\id\otimes \col) \sum_{i=1}^n F_{|I_i} \ot F_{|\overline{I}_i}=\ (\id\otimes \col) \sum_{i=1}^n F_{|[1,i-1]} \ot F_{|[i+1,n]}\\
=&\  \sum_{i=1}^n F_{|[1,i-1]} \ot \col (F_{|[i+1,n]})=\  \sum_{i=1}^n F_{|[1,i-1]} \ot \Big(\sum_{k=i+1}^{n}F_{|[i+1, k-1]}\ot F_{|[k+1, n]} \Big)\\
=&\  \sum_{i=1}^n \sum_{k=i+1}^{n} F_{|[1,i-1]} \ot F_{|[i+1, k-1]}\ot F_{|[k+1, n]} =\  \sum_{k=2}^{n} \sum_{i=1}^{k-1} F_{|[1,i-1]} \ot F_{|[i+1, k-1]}\ot F_{|[k+1, n]} \\
=&\  (\col\otimes \id)\col(F).
\end{align*}
This completes the proof.
\end{proof}

Now we arrive at our main result in this subsection.

\begin{theorem}
The quadruple $(\hrts,\, \conc, \etree, \,\col )$ is an  $\epsilon$-unitary bialgebra of weight zero.
\label{thm:rt2}
\end{theorem}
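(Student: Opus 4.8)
The plan is to assemble the statement from the structural results already in place, so that essentially no new computation is required. First I would note that $(\hrts,\conc,\etree)$ is a unitary associative algebra for trivial reasons: by construction $\hrts=\bfk\rfs=\bfk M(\rts)$ is the free $\bfk$-module on the free monoid $M(\rts)=\rfs$ generated by the decorated planar rooted trees, so the concatenation $\conc$ is associative and the empty forest $\etree$ is a two-sided unit; none of this uses the decoration or the coproduct. Next, the coalgebra axioms are exactly Lemma~\mref{lem:rt1}, which asserts that $(\hrts,\col)$ is a coassociative coalgebra (without counit); there coassociativity was checked by passing through the combinatorial formula of Theorem~\mref{thm:comb} and computing both $(\id\ot\col)\col(F)$ and $(\col\ot\id)\col(F)$ as the same triple sum $\sum_{k=2}^n\sum_{i=1}^{k-1}F_{|[1,i-1]}\ot F_{|[i+1,k-1]}\ot F_{|[k+1,n]}$.

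It then remains to verify the weighted infinitesimal compatibility Eq.~(\mref{eq:cocycle}) with $\lambda=0$, i.e.\ $\col(F_1F_2)=F_1\cdot\col(F_2)+\col(F_1)\cdot F_2$ for all $F_1,F_2\in\hrts$, the left and right $\hrts$-actions on $\hrts\ot\hrts$ being those of Eq.~(\mref{eq:dota}). By bilinearity of both sides it suffices to treat basis elements $F_1,F_2\in\rfs$, and this is precisely the content of Lemma~\mref{lem:colff}, whose proof splits the sum over proper biideals of $F_1F_2$ according to whether the biideal meets $V(F_2)$ or not, yielding exactly the two terms $F_1\cdot\col(F_2)$ and $\col(F_1)\cdot F_2$ with \emph{no} residual term $F_1\ot F_2$. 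Hence the weight is zero, and combining the three facts --- algebra, coalgebra, and the $\lambda=0$ compatibility --- shows that $(\hrts,\conc,\etree,\col)$ is an $\epsilon$-unitary bialgebra of weight zero.

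I do not expect any genuine obstacle at this stage: the substance of the argument was already spent on Theorem~\mref{thm:comb} (the combinatorial description of $\col$ via biideals), Lemma~\mref{lem:colff} (the derivation-type identity), and Lemma~\mref{lem:rt1} (coassociativity). The only point deserving a moment's care is the bookkeeping around the empty forest: one should check that the conventions $\col(\etree)=0$ from Eq.~(\mref{eq:dele}) and $F_{|\emptyset}=\etree$, $\etree_{|I}=0$ from Eq.~(\mref{eq:efoz}) are consistent with $\etree$ being the unit, e.g.\ $\col(\etree F)=\col(F)=\etree\cdot\col(F)+\col(\etree)\cdot F$ and symmetrically on the right, which are immediate. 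So the final proof reduces to a few lines citing Lemmas~\mref{lem:rt1} and~\mref{lem:colff} together with the observation that the multiplication and unit on $\hrts$ are the standard ones on $\bfk M(\rts)$.
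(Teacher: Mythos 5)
Your proposal is correct and follows essentially the same route as the paper: the paper's proof simply observes that $(\hrts,\conc,\etree)$ is a unitary algebra and then cites Lemma~\mref{lem:colff} for the weight-zero compatibility and Lemma~\mref{lem:rt1} for coassociativity. Your extra check that the conventions $\col(\etree)=0$ and $F_{|\emptyset}=\etree$ are consistent with $\etree$ being the unit is a reasonable (if unstated in the paper) sanity check, but adds nothing essentially new.
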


\begin{proof}
Since $\hrts$ is closed under the concatenation product $\conc$, the $(\hrts,\, \conc, \etree)$ is a unitary algebra.
Then the result follows from Lemmas~\ref{lem:colff} and~\ref{lem:rt1}.
\end{proof}

\subsection{Free $\Omega$-cocycle infinitesimal unitary bialgebras}
The concept of an algebra with (one or more) linear operators was introduced by Kurosh~\cite{Kur60}.
Later Guo~\cite{Guo09} called them operated algebras and constructed free operated algebras on a set.
See also~\cite{BCQ10, Gub}. In this subsection, we conceptualize the mixture of operated algebras and infinitesimal bialgebras,
and characterize the freeness of $\hrts$ in such categories.

\begin{defn}\cite{Guo09} Let $\Omega$ be a nonempty set.
\begin{enumerate}
\item An {\bf $\Omega$-operated monoid } is a monoid $M$ together with a set of operators $P_{\omega}: M\to M$, $\omega\in \Omega$.

\item An {\bf $\Omega$-operated (unitary) algebra } is a (unitary) algebra $A$ together with a set of linear operators $P_{\omega}: A\to A$, $\omega\in \Omega$.
\end{enumerate}
\end{defn}

Inspired by Eq.~(\ref{eq:dbp}), we introduce the following concepts.

\begin{defn}\label{defn:xcobi}
Let $\Omega$ be a nonempty set and $\lambda$ a given element of $\bfk$.
\begin{enumerate}
\item  An  {\bf $\Omega$-operated  $\epsilon$-bialgebra of weight $\lambda$} is an $\epsilon$-bialgebra $H$ of weight $\lambda$ together with a set of linear operators $P_{\omega}: H\to H$, $\omega\in \Omega$. If further $H$ has a unity, $H$ is called an {\bf $\Omega$-operated $\epsilon$-unitary bialgebra of weight $\lambda$}.
\label{it:def1}

\item Let $(H,\, \{P_{\omega}\mid \omega \in \Omega\})$ and $(H',\,\{P'_{\omega}\mid \omega\in \Omega\})$ be two $\Omega$-operated $\epsilon$-bialgebras of weight $\lambda$. A linear map $\phi : H\rightarrow H'$ is called an {\bf $\Omega$-operated $\epsilon$-bialgebra morphism} if $\phi$ is a morphism of $\epsilon$-bialgebras and $\phi \circ P_\omega = P'_\omega \circ\phi$ for $\omega\in \Omega$. The {\bf $\Omega$-operated $\epsilon$-unitary bialgebra morphism} can be defined in the same way.
\label{it:def2}
\end{enumerate}
Since the unit 1 plays a part in Eq.~(\ref{eq:eqiterated}) below, we need to incorporate the unitary property
in the following concepts.
\begin{enumerate}
\setcounter{enumi}{2}
\item \label{it:def3}
An {\bf $\Omega$-cocycle $\epsilon$-unitary bialgebra of weight $\lambda$} is an $\Omega$-operated  $\epsilon$-unitary bialgebra $(H,\,m,\,1, \Delta, \{P_{\omega}\mid \omega \in \Omega\})$ of weight $\lambda$ satisfying the $\epsilon$-cocycle condition:
\begin{equation}
\Delta P_{\omega} = \id \otimes 1+ (\id\otimes P_{\omega}) \Delta \quad \text{ for } \omega\in \Omega.
\label{eq:eqiterated}
\end{equation}

\item The {\bf free $\Omega$-cocycle $\epsilon$-unitary bialgebra of weight $\lambda$ on a set $X$} is an $\Omega$-cocycle $\epsilon$-unitary bialgebra
$(H_{X},\,m_{X}, \,1_X, \Delta_{X}, \,\{P_{\omega}\mid \omega \in \Omega\})$ of weight $\lambda$ together with a set map $j_X: X \to H_{X}$  with the property that,
for any $\Omega$-cocycle $\epsilon$-unitary bialgebra $(H,\,m,\,1, \Delta,\,\{P'_{\omega}\mid \omega \in \Omega\})$ of weight $\lambda$ and any set map
$f: X\to H$ such that $\Delta (f(x))=1\ot 1$ for $x \in X$, there is a unique morphism $\bar{f}:H_X\to H$ of
$\Omega$-operated $\epsilon$-unitary bialgebras such that $\bar{f}\circ j_X =f$.\label{it:def4}
\end{enumerate}
\end{defn}

When $\Omega$ is a singleton set, we will omit the ``$\Omega$".
The following result generalizes the universal properties studied in~\cite{CK98, Fo3, Guo09,Moe01, ZGG16}.
Recall from Eq.~(\ref{eq:rdeff}) that
\begin{align*}
\rfs = \lim_{\longrightarrow} \calf_n=\bigcup_{n=0}^{\infty} \calf_n.
\end{align*}

\begin{lemma}\cite[Theorem~4.5]{ZCGL18}
Let $j_{X}: X\hookrightarrow \rfs$, $x \mapsto \bullet_{x}$ be the natural embedding and $m_{\RT}$ be the concatenation product. Then, we have the following.
\begin{enumerate}
\item
The quadruple $(\rfs, \,\mul,\, 1, \, \{B_{\omega}^+\mid \omega\in \Omega\})$ together with the $j_X$ is the free $\Omega$-operated monoid on $X$.
\label{it:fomonoid}
\item
The quadruple $(\hrts, \,\mul,\,1, \, \{B_{\omega}^+\mid \omega\in \Omega\})$ together with the $j_X$ is the free $\Omega$-operated unitary algebra on $X$.
\label{it:fualg}
\end{enumerate}
\label{lemm:free}
\end{lemma}

\begin{theorem}\label{thm:propm}
Let $X$ be a set and $j_{X}: X\hookrightarrow \rfs$, $x \mapsto \bullet_{x}$ the natural embedding.
Then the quintuple $(\hrts, \,\mul,\,\etree, \, \col,\,\{B_{\omega}^+\mid \omega\in \Omega\})$ together with the $j_X$ is the free $\Omega$-cocycle $\epsilon$-unitary bialgebra of weight zero on $X$.
\label{it:fubialg}
\end{theorem}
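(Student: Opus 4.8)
The plan is to reduce this to three results already established: the bialgebra version of the universal property (Theorem~\mref{thm:propm}~(\mref{it:fubialg})), the fact that $\hrts$ carries an $\epsilon$-unitary Hopf algebra structure in characteristic zero (Theorem~\mref{thm:rt13}), and Aguiar's rigidity lemma (Lemma~\mref{lem:rt16}) saying that an $\epsilon$-bialgebra morphism between weight-zero $\epsilon$-Hopf algebras is automatically an $\epsilon$-Hopf algebra morphism. First I would record that, since $\mathrm{char}(\bfk)=0$, Theorem~\mref{thm:rt13} makes $(\hrts,\,\conc,\,\etree,\,\col)$ an $\epsilon$-unitary Hopf algebra of weight zero; combined with the infinitesimal $1$-cocycle condition~(\mref{eq:dbp}) for the grafting operators $B^+_\omega$, the quintuple $(\hrts,\,\conc,\,\etree,\,\col,\,\{B^+_\omega\mid\omega\in\Omega\})$ is an $\Omega$-cocycle $\epsilon$-unitary Hopf algebra of weight zero. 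So the proposed object has the right type, with $j_X$ as its structure map.

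Next, given any $\Omega$-cocycle $\epsilon$-unitary Hopf algebra $(H,\,m,\,1,\,\Delta,\,\{P'_\omega\mid\omega\in\Omega\})$ of weight zero and any set map $f:X\to H$ with $\Delta(f(x))=1\ot 1$ for $x\in X$, I would observe that $H$ is in particular an $\Omega$-cocycle $\epsilon$-unitary bialgebra of weight zero. Hence Theorem~\mref{thm:propm}~(\mref{it:fubialg}) provides a unique morphism $\free{f}:\hrts\to H$ of $\Omega$-operated $\epsilon$-unitary bialgebras with $\free{f}\circ j_X=f$. It then remains only to upgrade this to a morphism of $\Omega$-operated $\epsilon$-unitary Hopf algebras, i.e.\ to check $S_H\circ\free{f}=\free{f}\circ S_{\hrts}$. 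But $\free{f}$ is an $\epsilon$-bialgebra morphism between the weight-zero $\epsilon$-Hopf algebras $\hrts$ and $H$, so Lemma~\mref{lem:rt16} yields this identity for free; the operator compatibility $\free{f}\circ B^+_\omega=P'_\omega\circ\free{f}$ is already built into being an $\Omega$-operated morphism. Thus $\free{f}$ is a morphism of $\Omega$-operated $\epsilon$-unitary Hopf algebras.

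For uniqueness, any morphism $g:\hrts\to H$ of $\Omega$-operated $\epsilon$-unitary Hopf algebras with $g\circ j_X=f$ is a fortiori a morphism of $\Omega$-operated $\epsilon$-unitary bialgebras satisfying the same condition, so $g=\free{f}$ by the uniqueness already furnished by Theorem~\mref{thm:propm}~(\mref{it:fubialg}).

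I do not expect a genuine obstacle: the entire content is packaged in the three cited results, and the only point requiring care is that the Hopf-morphism axiom imposes no extra constraint, thanks to Lemma~\mref{lem:rt16}, so both the map $\free{f}$ and its uniqueness transfer verbatim from the bialgebra case. The mild subtlety, if any, is merely confirming that the forgetful passage from the Hopf to the bialgebra setting is compatible with all the structure at once---unit, coproduct, operators $B^+_\omega$, and the weight-zero $\epsilon$-cocycle condition---which is immediate from the definitions.
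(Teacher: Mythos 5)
Your proposal is correct and follows essentially the same route as the paper: establish the $\Omega$-cocycle $\epsilon$-unitary Hopf structure via Theorem~\ref{thm:rt13} and Eq.~(\ref{eq:dbp}), obtain the unique bialgebra morphism from Theorem~\ref{thm:propm}~(\ref{it:fubialg}), and invoke Lemma~\ref{lem:rt16} for automatic compatibility with the antipodes. Your explicit treatment of uniqueness is a slight elaboration on the paper's argument but adds nothing substantively different.
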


\begin{proof}
By Theorem~\ref{thm:rt2}, $(\hrts, \,\mul,\,\etree, \col)$ is an $\epsilon$-unitary bialgebra of weight zero,
and further, together with the $\{B_{\omega}^+\mid \omega\in \Omega\}$, is an $\Omega$-cocycle $\epsilon$-unitary bialgebra of weight zero by Eq.~(\ref{eq:dbp}).

Let $(H,\, m,\,1, \Delta,\, \{P_{\omega}\mid \omega \in \Omega\})$ be an $\Omega$-cocycle $\epsilon$-unitary bialgebra of weight zero
and $f:X\to H$ a set map such that $\Delta(f(x)) = 1\ot 1$ for $x\in X$.
In particular, $(H,\, m,\, 1,\, \{P_{\omega}\mid \omega \in \Omega\})$ is an $\Omega$-operated unitary algebra.
By Lemma~\ref{lemm:free} (\ref{it:fualg}), there exists a unique $\Omega$-operated unitary algebra morphism $\bar{f}:\hrts \to H$ such that $\free{f}\circ j_X={f}$. It remains to check the compatibility of the coproducts $\Delta$ and $\col$ for which we verify
\begin{equation}
\Delta \bar{f} (F)=(\bar{f}\ot \bar{f}) \col (F)\quad \text{for all } F\in \rfs,
\label{eq:copcomp}
\end{equation}
by induction on the depth $\dep(F)\geq 0$.
For the initial step of $\dep(F)=0$,
we have $F = \bullet_{x_1} \cdots \bullet_{x_m}$ for some $m\geq 0$ and $x_1, \cdots, x_m\in X$.
Here we use the convention that $F=\etree$ when $m=0$.
If $m=0$, then by Remark~\ref{remk:units} and Eq.~(\ref{eq:dele}),
\begin{align*}
\Delta  \bar{f} (F) &= \Delta  \bar{f} (\etree)= \Delta (1)=0=(\bar{f}\otimes\bar{f})\col(\etree)=(\bar{f}\otimes\bar{f})\col(F).
\end{align*}
If $m\geq 1$, then
\allowdisplaybreaks{
\begin{align*}
\Delta\bar{f}(\bullet_{x_1} \cdots \bullet_{x_m})=&\ \Delta \Big( \bar{f}(\bullet_{x_1})\cdots\bar{f}(\bullet_{x_{m}}) \Big)\\
=&\ \sum_{i=1}^m \Big(\bar{f}(\bullet_{x_{1}})\cdots\bar{f}(\bullet_{x_{i-1}})\Big)\cdot
\Delta\big(\bar{f}(\bullet_{x_{i}})\big) \cdot \Big(\bar{f}(\bullet_{x_{i+1}})\cdots\bar{f}(\bullet_{x_{m}})\Big)
\quad \text{(by Eq.~(\ref{eq:wib}))}\\
=&\ \sum_{i=1}^m \Big(\bar{f}(\bullet_{x_{1}})\cdots\bar{f}(\bullet_{x_{i-1}})\Big)\cdot
\Delta\big(f(x_i)\big) \cdot \Big(\bar{f}(\bullet_{x_{i+1}})\cdots\bar{f}(\bullet_{x_{m}})\Big)\\
=&\ \sum_{i=1}^m \Big(\bar{f}(\bullet_{x_{1}})\cdots\bar{f}(\bullet_{x_{i-1}})\Big)\cdot
(1\ot 1)\cdot \Big(\bar{f}(\bullet_{x_{i+1}})\cdots\bar{f}(\bullet_{x_{m}})\Big)\\
=&\ \sum_{i=1}^m\bar{f}(\bullet_{x_{1}})\cdots\bar{f}(\bullet_{x_{i-1}})\ot\bar{f}
(\bullet_{x_{i+1}}) \cdots\bar{f}(\bullet_{x_{m}})\\
=&\ \sum_{i=1}^m\bar{f}(\bullet_{x_{1}}\cdots\bullet_{x_{i-1}})\ot\bar{f}(\bullet_{x_{i+1}} \cdots\bullet_{x_{m}})\\
=&\ \sum_{i=1}^m (\bar{f}\ot\bar{f})(\bullet_{x_{1}}\cdots \bullet_{x_{i-1}}\ot \bullet_{x_{i+1}} \cdots \bullet_{x_{m}})\\
=&\ (\bar{f}\ot\bar{f})\left( \sum_{i=1}^m \bullet_{x_{1}}\cdots \bullet_{x_{i-1}}\ot \bullet_{x_{i+1}} \cdots \bullet_{x_{m}}\right)\\
=&\ (\bar{f}\ot\bar{f})\col(\bullet_{x_1} \cdots \bullet_{x_m})\quad(\text{by Eq.~(\ref{eq:comb})}).
\end{align*}
}
Suppose that Eq.~(\ref{eq:copcomp}) holds for $\dep(F)\leq n$ for an $n\geq 0$ and consider the case of $\dep(F)=n+1$.
For this case we apply the induction on the breadth $\bre(F)$. Since $\dep(F)=n+1\geq1$, we have $F\neq \etree$ and $\bre(F)\geq 1$.
If $\bre(F)= 1 $, we have $F=B_{\omega}^+(\overline{F})$ for some $\overline{F}\in\rfs$ and $\omega\in \Omega$.  Then
\allowdisplaybreaks{
\begin{align*}
\Delta \bar{f}(F)&=\Delta \bar{f} (B_{\omega}^{+}(\overline{F}))= \Delta P_\omega(\bar{f} (\overline{F}))= \bar{f}(\overline{F}) \ot 1+ (\id\ot P_\omega)\Delta(\bar{f} (\overline{F})) \quad(\text{by Eq.~(\ref{eq:eqiterated})})\\
&=\bar{f}(\overline{F})\ot 1+ (\id\ot P_\omega)(\bar{f}\ot \bar{f}) \col (\overline{F}) \quad(\text{by the induction hypothesis on~}\dep(F)) \\
&=\bar{f}(\overline{F})\ot 1+ (\bar{f}\ot P_\omega\bar{f}) \col (\overline{F})\\
&=\bar{f}(\overline{F})\ot 1+ (\bar{f}\ot \bar{f}B_{\omega}^+) \col (\overline{F})
\quad(\text{by $\bar{f}$ being an operated algebra morphism}) \\
&=(\bar{f}\ot \free{f})\Big(\overline{F}\ot \etree +(\id\ot B_{\omega}^+)\col (\overline{F})\Big) \\
&=(\bar{f}\ot \bar{f}) \col (B_{\omega}^+(\overline{F}))=(\bar{f}\ot \bar{f}) \col (F).
\end{align*}
}
Assume that Eq.~(\ref{eq:copcomp}) holds for $\dep(F)=n+1$ and $\bre(F)\leq m$, in addition to $\dep(F)\leq n$ by the first induction hypothesis, and consider the case when $\dep(F)=n+1$ and $\bre(F)=m+1\geq 2$. Then we can write $F=F_{1}F_{2}$ for some $F_{1},F_{2}\in\rfs$ with $0< \bre(F_{1}), \bre(F_{2}) < m+1$.
Using the Sweedler notation, we can write
\begin{align*}
\Delta_{\epsilon}(F_1)=\sum_{(F_1)}F_{1(1)}\otimes F_{1(2)} \text{\ and \ } \Delta_{\epsilon}(F_2)=\sum_{(F_2)}F_{2(1)}\otimes F_{2(2)}.
\end{align*}
By the induction hypothesis on the breadth,
\begin{equation}
\begin{aligned}
&\Delta(\bar{f}(F_{1}))=(\bar{f} \ot \bar{f})\col(F_{1})=\sum_{(F_{1})}\bar{f}(F_{1(1)})\ot\bar{f} (F_{1(2)}),\\
&\Delta(\bar{f}(F_{2}))=(\bar{f} \ot \bar{f})\col(F_{2})=
\sum_{(F_{2})}\bar{f}(F_{2(1)})\ot\bar{f} (F_{2(2)}).
\end{aligned}
\label{eq:df12}
\end{equation}
Consequently,
\allowdisplaybreaks{
\begin{align*}
\Delta \bar{f}(F)=&\ \Delta \bar{f} (F_{1}F_{2})=\Delta(\bar{f}(F_1)\bar{f}(F_2))= \bar{f}(F_{1})\cdot \Delta (\bar{f}(F_{2}))+\Delta(\bar{f}(F_{1})) \cdot \bar{f}(F_{2})\\
=&\ \bar{f}(F_{1})\cdot \bigg(\sum_{(F_{2})}\bar{f}(F_{2(1)})\ot \bar{f}(F_{2(2)})\bigg)+\bigg(\sum_{(F_{1})}\bar{f}(F_{1(1)})\ot\bar{f} (F_{1(2)})\bigg) \cdot \bar{f}(F_{2}) \quad (\text{by Eq.~(\ref{eq:df12})})\\
=&\ \sum_{(F_{2})}\bar{f}(F_{1})\bar{f}(F_{2(1)})\ot\bar{f}(F_{2(2)})+\sum_{(F_{1})}\bar{f}(F_{1(1)})\ot\bar{f} (F_{1(2)})\bar{f}(F_{2}) \quad (\text{by Eq.~(\ref{eq:dota})})\\
=&\ \sum_{(F_{2})}\bar{f}(F_{1}F_{2(1)})\ot\bar{f}(F_{2(2)})+\sum_{(F_{1})}\bar{f}(F_{1(1)})\ot\bar{f} (F_{1(2)}F_{2})\\
=&\ (\bar{f}\ot \bar{f})\left(\sum_{(F_{2})}F_{1}F_{2(1)}\ot F_{2(2)}+\sum_{(F_{1})}F_{1(1)}\ot F_{1(2)}F_{2} \right)\\
=&\ (\bar{f}\ot \bar{f})\left(F_{1}\cdot \sum_{(F_{2})}F_{2(1)}\ot F_{2(1)}+ \Big(\sum_{(F_{1})}F_{1(1)}\ot F_{1(2)}\Big) \cdot F_{2} \right) \quad (\text{by Eq.~(\ref{eq:dota})})\\
=&\ (\bar{f} \ot \bar{f})(F_{1} \cdot \col(F_{2})+\col(F_{1})\cdot F_{2})= (\bar{f}\ot \bar{f})\col(F_{1}F_{2})  \quad(\text{by Lemma~\ref{lem:colff}})\\
=&\ (\bar{f}\ot \bar{f})\col(F).
\end{align*}
}
This completes the induction on breadth and hence the induction on depth.
\end{proof}

\begin{remark}
Guo~\cite{Guo09} constructed the free $\Omega$-operated monoid on a set $X$ in terms of
Motzkin paths $\mathcal{P}(X, \Omega)$ and Motzkin words $\mathcal{W}(X, \Omega)$, respectively.
By the uniqueness of free objects, we have
$$\rfs\cong \mathcal{P}(X, \Omega) \cong \mathcal{W}(X, \Omega),$$
as $\Omega$-operated monoids.
\end{remark}

\section{Pre-Lie algebras and weighted infinitesimal bialgebras}\label{sec:preLie}
In this section, we derive a pre-Lie algebra from an $\epsilon$-bialgebra of weight $\lambda$.
We refer to~\cite{Aguu02} for the classical result in the case of weight zero.

\subsection{Pre-Lie algebras from weighted infinitesimal bialgebras}
Pre-Lie algebras are a class of nonassociative algebras and have many other names such as left-symmetric algebras,
Koszul-Vinberg algebras, quasi-associative algebras and so on.
See the surveys~\cite{Bai, Man11} for more details.

\begin{defn}
A {\bf (left) pre-Lie algebra} is a $\bfk $-module $A$ together with a binary operation $\rhd: A\ot A \rightarrow A$ satisfying:
\begin{align}
(a\rhd b)\rhd c-a\rhd (b\rhd c)=(b\rhd a)\rhd c-b\rhd (a\rhd c)\, \text{ for }\, a,b,c\in A.
\label{eq:preli0}
\end{align}
\end{defn}

The close relation between pre-Lie algebras and Lie algebras is characterized by the following result.

\begin{lemma}\cite[Theorem~1]{Ger63}
Let $(A, \rhd)$ be a pre-Lie algebra. Define for elements in $A$ a new multiplication by setting
\begin{align*}
[a, b] :=a\rhd b-b\rhd a\,\text{ for }\, a,b\in A.
\end{align*}
Then $(A, [_{-}, _{-}])$ is a Lie algebra.
\label{lem:preL}
\end{lemma}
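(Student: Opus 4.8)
The plan is to verify the three defining axioms of a Lie algebra for the bracket $[a,b]:=a\rhd b-b\rhd a$ directly. Bilinearity of $[-,-]$ is inherited from the bilinearity of $\rhd$, and antisymmetry is built into the definition: $[a,a]=a\rhd a-a\rhd a=0$, so $[a,b]=-[b,a]$ for all $a,b\in A$. Hence the only substantive point is the Jacobi identity
\[
[[a,b],c]+[[b,c],a]+[[c,a],b]=0\quad\text{for all }a,b,c\in A.
\]

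The key device is to rephrase everything through the associator $(a,b,c):=(a\rhd b)\rhd c-a\rhd(b\rhd c)$, so that the pre-Lie identity Eq.~(\mref{eq:preli0}) reads precisely as the symmetry $(a,b,c)=(b,a,c)$ of the associator in its first two arguments. First I would expand each nested bracket using $[x,y]=x\rhd y-y\rhd x$ and bilinearity of $\rhd$, producing twelve terms in total. Then I would regroup these twelve terms into six associators; a term-by-term bookkeeping check gives
\[
[[a,b],c]+[[b,c],a]+[[c,a],b]=\big((a,b,c)-(b,a,c)\big)+\big((b,c,a)-(c,b,a)\big)+\big((c,a,b)-(a,c,b)\big).
\]
Finally, each of the three parenthesized differences on the right vanishes by Eq.~(\mref{eq:preli0}), so the sum is zero and the Jacobi identity holds.

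There is no genuine obstacle here; the only point demanding care is the combinatorial bookkeeping in the regrouping step, namely confirming that the twelve expanded terms match the six associators exactly (each associator supplies one ``left-associated'' summand $(x\rhd y)\rhd z$ and one ``right-associated'' summand $x\rhd(y\rhd z)$, and all twelve are accounted for with the correct signs). This is a finite mechanical verification. Once the Jacobi identity is established, $(A,[-,-])$ satisfies all axioms of a Lie algebra by definition, which completes the proof.
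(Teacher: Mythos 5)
Your proof is correct: the regrouping of the twelve expanded terms into the six associators checks out exactly, and each of the three differences vanishes because the pre-Lie identity Eq.~(\ref{eq:preli0}) is precisely the symmetry of the associator in its first two arguments. The paper gives no proof of this lemma, citing Gerstenhaber instead, and your argument is the standard one from that source, so there is nothing to compare beyond noting that your verification is complete and sound.
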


The following result captures the connection from weighted $\epsilon$-bialgebras to pre-Lie algebras.

\begin{theorem}
Let $(A, m, \Delta)$ be an $\epsilon$-bialgebra of weight $\lambda$.
Then $(A, \rhd)$ is a pre-Lie algebra, where
\begin{align}
\rhd: A\ot A \to A, \, a\ot b \mapsto a\rhd b:=\sum_{(b)}b_{(1)}a b_{(2)}.
\label{eq:preope}
\end{align}
\label{thm:preL}
\end{theorem}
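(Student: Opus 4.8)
The plan is to verify the defining identity~(\mref{eq:preli0}) by showing that the associator
$\mathrm{Ass}(a,b,c):=(a\rhd b)\rhd c-a\rhd(b\rhd c)$ is invariant under the transposition $a\leftrightarrow b$. Since $\rhd$ is bilinear, it is enough to manipulate the Sweedler expression $a\rhd b=\sum_{(b)}b_{(1)}ab_{(2)}$ directly, using the weight-$\lambda$ compatibility in the form
\[
\Delta(xy)=\sum_{(y)}xy_{(1)}\ot y_{(2)}+\sum_{(x)}x_{(1)}\ot x_{(2)}y+\lambda\,x\ot y
\]
together with coassociativity, which we write as $\sum_{(a)}a_{(1)}\ot a_{(2)}\ot a_{(3)}:=(\Delta\ot\id)\Delta(a)=(\id\ot\Delta)\Delta(a)$.

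First I would establish, by applying the compatibility twice to $(xb)y$, the auxiliary formula
\[
\Delta(xby)=\sum_{(y)}xby_{(1)}\ot y_{(2)}+\sum_{(b)}xb_{(1)}\ot b_{(2)}y+\sum_{(x)}x_{(1)}\ot x_{(2)}by+\lambda\,xb\ot y+\lambda\,x\ot by
\]
valid for all $x,b,y\in A$. Applying this with $x=c_{(1)}$, $y=c_{(2)}$, summing over $(c)$, and using coassociativity to merge the iterated coproducts of $c_{(1)}$ and $c_{(2)}$ into $\sum c_{(1)}\ot c_{(2)}\ot c_{(3)}$, I obtain a five-term expression for $\Delta(b\rhd c)=\sum_{(c)}\Delta(c_{(1)}bc_{(2)})$. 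Sandwiching $a$ between the two tensor factors then yields $a\rhd(b\rhd c)$ as a sum of five terms, while $(a\rhd b)\rhd c=\sum_{(c)}c_{(1)}(a\rhd b)c_{(2)}=\sum_{(b),(c)}c_{(1)}b_{(1)}ab_{(2)}c_{(2)}$ is immediate from the definition.

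The decisive observation is that exactly one of the five terms of $a\rhd(b\rhd c)$, namely the one arising from the ``middle'' piece $\sum_{(b),(c)}c_{(1)}b_{(1)}\ot b_{(2)}c_{(2)}$ of $\Delta(b\rhd c)$, equals $(a\rhd b)\rhd c$; hence these cancel in $\mathrm{Ass}(a,b,c)$ and there remains
\[
\mathrm{Ass}(a,b,c)=-\sum c_{(1)}bc_{(2)}ac_{(3)}-\sum c_{(1)}ac_{(2)}bc_{(3)}-\lambda\sum c_{(1)}abc_{(2)}-\lambda\sum c_{(1)}bac_{(2)}.
\]
Each of the two pairs of terms on the right is manifestly invariant under $a\leftrightarrow b$, so $\mathrm{Ass}(a,b,c)=\mathrm{Ass}(b,a,c)$, which is precisely~(\mref{eq:preli0}). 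The only point requiring genuine care is the bookkeeping of the iterated Sweedler indices and the repeated use of coassociativity when expanding $\Delta(c_{(1)}bc_{(2)})$; once the auxiliary formula for $\Delta(xby)$ is in hand the remainder is a routine matching of terms, and the scalar $\lambda$ plays no role beyond being carried along.
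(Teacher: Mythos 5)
Your proposal is correct and follows essentially the same route as the paper: both compute $\Delta(b\rhd c)=\sum_{(c)}\Delta(c_{(1)}bc_{(2)})$ by applying the weight-$\lambda$ compatibility twice, use coassociativity to merge the iterated Sweedler indices into $\sum c_{(1)}\ot c_{(2)}\ot c_{(3)}$, observe that the middle term produces exactly $(a\rhd b)\rhd c$ after sandwiching $a$, and conclude from the manifest $a\leftrightarrow b$ symmetry of the remaining four terms of the associator. The isolation of the auxiliary formula for $\Delta(xby)$ is a minor organizational difference only.
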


\begin{proof}
Let $a,b, c\in A$. It follows from Eq.~(\ref{eq:wib}) that
\begin{align*}
\Delta(\sum_{(c)}c_{(1)}bc_{(2)})=&\sum_{(c)}\Delta(c_{(1)}bc_{(2)})=\sum_{(c)}\bigg(c_{(1)}b\cdot \Delta(c_{(2)})+\Delta(c_{(1)}b)\cdot c_{(2)}+\lambda c_{(1)}b \ot c_{(2)}\bigg)\\
=&\sum_{(c)}\bigg(c_{(1)}b\cdot \Delta(c_{(2)})+\Big(c_{(1)}\cdot \Delta(b)+\Delta(c_{(1)})\cdot b+\lambda(c_{(1)}\ot b)\Big)\cdot c_{(2)}+\lambda c_{(1)}b \ot c_{(2)}\bigg)\\
=&\sum_{(c)}\bigg(c_{(1)}b\cdot \Delta(c_{(2)})+c_{(1)}\cdot \Delta(b)\cdot c_{(2)}+\Delta(c_{(1)})\cdot b c_{(2)}+\lambda c_{(1)}\ot b c_{(2)}+\lambda c_{(1)}b \ot c_{(2)}\bigg)\\
=&\sum_{(c)}\bigg( \sum_{(c_{(2)})}c_{(1)}bc_{(2)(1)}\ot c_{(2)(2)} +  \sum_{(b)}c_{(1)}b_{(1)}\ot b_{(2)}c_{(2)}+ \sum_{(c_{(1)})}c_{(1)(1)}\ot c_{(1)(2)}bc_{(2)}\\
&+\lambda c_{(1)}\ot bc_{(2)}+\lambda c_{(1)}b\ot c_{(2)}\bigg)\\
=& \sum_{(c)}c_{(1)}bc_{(2)}\ot c_{(3)}+\sum_{(c),(b)}c_{(1)}b_{(1)}\ot b_{(2)}c_{(2)}+ \sum_{(c)}c_{(1)}\ot c_{(2)}bc_{(3)} \\
&+\lambda \sum_{(c)} c_{(1)}\ot bc_{(2)}+ \lambda \sum_{(c)}c_{(1)}b\ot c_{(2)}\quad \text{(by the coassociative law)}.
\end{align*}
Together this with Eq.~(\ref{eq:preope}), we obtain
\begin{align*}
a\rhd (b\rhd c)=& a\rhd (\sum_{(c)}c_{(1)}b c_{(2)})\\
=& \sum_{(c)}c_{(1)}bc_{(2)}a c_{(3)} +\sum_{(c),(b)}c_{(1)}b_{(1)}a b_{(2)}c_{(2)}+ \sum_{(c)}c_{(1)}a c_{(2)}bc_{(3)}\\
&+ \lambda \sum_{(c)}c_{(1)}a bc_{(2)}+\lambda \sum_{(c)}c_{(1)}ba c_{(2)}.
\end{align*}
Moreover,
\begin{align*}
(a\rhd b)\rhd c=(\sum_{(b)}b_{(1)}a b_{(2)})\rhd c=\sum_{(c),(b)}c_{(1)}b_{(1)}a b_{(2)}c_{(2)}.
\end{align*}
Thus
\begin{align}\label{eq:preid}
(a\rhd b)\rhd c-a\rhd (b\rhd c)=&- \sum_{(c)}c_{(1)}bc_{(2)}a c_{(3)}-\sum_{(c)}c_{(1)}a c_{(2)}bc_{(3)}
-\lambda\sum_{(c)} \bigg(c_{(1)}a bc_{(2)}+c_{(1)}ba c_{(2)}\bigg).
\end{align}
Observe that Eq.~(\ref{eq:preid}) is symmetric in $a$ and $b$. Hence
\begin{align*}
(a\rhd b)\rhd c-a\rhd (b\rhd c)=(b\rhd a)\rhd c-b\rhd (a\rhd c),
\end{align*}
and so $(A, \rhd)$ is a pre-Lie algebra.
\end{proof}

\subsection{Pre-Lie algebras from weighted commutative infinitesimal bialgebras}
\begin{defn}
Let $(A, \rhd)$ be a pre-Lie algebra. Then the pair $(A, \rhd)$ is  called {\bf a Novikov algebra} if satisfies
$$(a\rhd b) \rhd c = (a\rhd c) \rhd b \, \text{ for all }\, a, b \in A. $$
\end{defn}
We now give a new way to construct a pre-Lie algebra from a weighted infinitesimal bilagebra.

\begin{lemma}\label{lem:cd}
Let $(A, \mu, \Delta)$ be an $\epsilon$-bialgebra of weight $\lambda$. Denote
$\mathcal{D}=\mu \Delta: A\rightarrow A$. Then
\begin{align*}
\mathcal{D}(ab)=a\mathcal{D}(b)+\mathcal{D}(a)b+\lambda ab.
\end{align*}
\end{lemma}
\begin{proof}
By Eq.~(\ref{eq:wib}), we have
\begin{align*}
\mathcal{D}(ab)=\mu \Delta(ab)=&\ \mu \Big(a\cdot \Delta(b)+\Delta(a)\cdot b+\lambda a\ot b\Big)\\
=&\ \mu \left(\sum_{(b)}ab_{(1)}\ot b_{(2)}+\sum_{(a)}a_{(1)}\ot a_{(2)}b+\lambda a\ot b \right)\\
=&\ \sum_{(b)}ab_{(1)} b_{(2)}+ \sum_{(a)}a_{(1)} a_{(2)}b+\lambda a b\\
=&\ a\mathcal{D}(b)+\mathcal{D}(a)b+\lambda ab,
\end{align*}
as required.
\end{proof}

This motivates us to propose the following concept, which generalizes the classical derivation.

\begin{defn}\label{def:cderi}
Let $\lambda$ be a given element of $\bfk$, and let $A$ be an algebra.
Then a linear map $\mathcal{D}:A\rightarrow A$ is called a {\bf modified derivation of weight $\lambda$} if
\begin{align}
\mathcal{D}(ab) =a \mathcal{D}(b)+\mathcal{D}(a)b+\lambda ab \, \text{ for }\, a, b \in A.
\label{eq:cderi}
\end{align}
\end{defn}

\begin{remark}
\begin{enumerate}
\item The classical derivation is a modified derivation of weight zero, which is significant in differential algebras and Lie algebras.

\item The operator $\mathcal{D}$ in Eq.~(\ref{eq:cderi}) is a special differential type operator~\cite[Conjecture~4.7~(1)]{GSZ13}.
\end{enumerate}
\end{remark}

More generally, we have
\begin{theorem}\label{thm:wGD2}
Let $\kappa, \lambda$ be given elements of $\bfk$ and $(A, \mu )$ a commutative algebra and $\mathcal{D}: A\rightarrow A$ a modified derivation of weight $\lambda$. Define
\begin{align}
a \rhd_{\mathcal{D}} b:= \mu (a\ot \mathcal{D}(b))+\kappa ab =a \mathcal{D}(b)+\kappa ab. \label{eq:newpre2}
\end{align}
Then $(A, \rhd_{\mathcal{D}})$ is a pre-Lie algebra and a  Novikov algebra.
\end{theorem}

\begin{proof}
For $a, b, c\in A$,
\begin{align*}
&\ (a\rhd_{\mathcal{D}} b)\rhd_{\mathcal{D}} c-a\rhd_{\mathcal{D}} (b\rhd_{\mathcal{D}} c)\\
=&\ (a\mathcal{D} (b)+\kappa ab)\rhd_{\mathcal{D}} c-a\rhd_{\mathcal{D}}(b\mathcal{D}(c)+\kappa bc)\quad (\text{by Eq.~(\ref{eq:newpre2})})\\
=&\ a\mathcal{D}(b)\mathcal{D}(c)+\kappa a\mathcal{D}(b)c+\kappa ab \mathcal{D}(c)+\kappa^2abc-a\mathcal{D}(b\mathcal{D}(c))\\
&\ -\kappa ab\mathcal{D}(c)-\kappa a \mathcal{D}(bc)-\kappa^2abc\\
=&\ a\mathcal{D}(b)\mathcal{D}(c)-a\mathcal{D}(b\mathcal{D}(c))+\kappa a\mathcal{D}(b)c-\kappa a \mathcal{D}(bc)\\
=&\ a\mathcal{D}(b)\mathcal{D}(c)-a\Big(b\mathcal{D}^2(c)  +\mathcal{D}(b)\mathcal{D}(c)+\lambda b\mathcal{D}(c) \Big)+ \kappa a\mathcal{D}(b)c \\
&\ - \kappa a \Big(b\mathcal{D}(c) +\mathcal{D}(b)c+\lambda bc \Big)\quad (\text{by Eq.~(\ref{eq:cderi})})\\
=&\ -ab\mathcal{D}^2(c) -\lambda ab\mathcal{D}(c)
- \kappa a b\mathcal{D}(c) -\kappa\lambda abc \\
=&\ -ab \Big(\mathcal{D}^2(c) +(\kappa+\lambda) \mathcal{D}(c)+\kappa\lambda c\Big).
\end{align*}
Then $(A, \rhd_{\mathcal{D}})$ is a pre-Lie algebra follows from the fact that the above identity is symmetric in $a$ and $b$ when $A$ is a commutative algebra. Next we show that $(A, \rhd_{\mathcal{D}})$ is further a Novikov algebra. For $a, b, c\in A$, we have
\begin{align*}
(a\rhd_{\mathcal{D}} b)\rhd_{\mathcal{D}} c=&\ \Big(a\mathcal{D}(b)+\kappa ab\Big)\mathcal{D}(c)+\kappa \Big(a\mathcal{D}(b)+\kappa ab\Big)c\\
=&\ a\mathcal{D}(b)\mathcal{D}(c)+\kappa ab\mathcal{D}(c)+\kappa a\mathcal{D}(b)c+\kappa^2 abc\\
=&\ a\mathcal{D}(c)\mathcal{D}(b)+\kappa a\mathcal{D}(c)b+\kappa a c\mathcal{D}(b)+\kappa^2 abc\quad(\text{by $A$ is commutative})\\
=&\ a\mathcal{D}(c)\mathcal{D}(b)+\kappa a c\mathcal{D}(b)+\kappa a\mathcal{D}(c)b+\kappa^2 abc\\
=&\ \Big(a\mathcal{D}(c)+\kappa ac\Big)\mathcal{D}(b)+\kappa \Big(a\mathcal{D}(c)+\kappa ac\Big)b\\
=&\ (a\rhd_{\mathcal{D}} c)\rhd_{\mathcal{D}} b.
\end{align*}
This completes the proof.
\end{proof}

As a consequence, we obtain a weighted version of the Gel'fand-Dorfman theorem on Novikov algebra.

\begin{coro}[{\bf Weighted Gel'fand-Dorfman}]\label{thm:wGD1}
Let $(A, \mu )$ be a commutative algebra and $\mathcal{D}: A\rightarrow A$ a modified derivation of weight $\lambda$. Define
\begin{align}
a \rhd_{\mathcal{D}} b:= \mu (a\ot \mathcal{D}(b)) =a \mathcal{D}(b). \label{eq:newpre1}
\end{align}
Then $(A, \rhd_{\mathcal{D}})$ is a pre-Lie algebra and a Novikov algebra.
\end{coro}

\begin{proof}
It follows from Theorem~\ref{thm:wGD2} by taking $\kappa=0$.
\end{proof}

\begin{coro}\label{coro:newpre}
Let $\kappa, \lambda$ be given elements of $\bfk$ and  $(A, \mu, \Delta)$ a commutative $\epsilon$-bialgebra of weight $\lambda$. Define
\begin{align*}
\rhd_\epsilon: A\ot A \to A, \, a\ot b \mapsto a\rhd_\epsilon b:=\sum_{(b)}a b_{(1)} b_{(2)}+\kappa ab.
\end{align*}
Then $(A, \rhd_\epsilon)$ is a pre-Lie algebra and a Novikov algebra.
\end{coro}

\begin{proof}
It follows from Lemma~\ref{lem:cd} and Theorem~\ref{thm:wGD2}.
\end{proof}

\begin{remark}
We would like to emphasize that Theorem~\ref{thm:wGD2} is very general. More precisely,
\begin{enumerate}
\item the Gel'fand-Dorfman theorem on Novikov algebra is the case of $\kappa=\lambda=0$ in Theorem~\ref{thm:wGD1};
\item a Novikov algebra introduced and studied by Filipov~\cite{Fil89} is the case of $\lambda=0$ in Theorem~\ref{thm:wGD1};
\item for a fixed element $\kappa \in A$, Eq.~(\ref{eq:newpre2}) also defines a Novikov algebra, which generalize the result studied by Xu~\cite{Xu96}, see also \cite[Remark~3.7]{Bai}.
\end{enumerate}
\end{remark}

\subsection{New pre-Lie algebras on decorated planar rooted forests}
In this subsection, as an application of Theorem~\ref{thm:preL}, we equip  $\hrts$ with a pre-Lie algebraic structure $(\hrts, \rhd_{\RT})$
and a Lie algebraic structure $(\hrts, [_{-}, _{-}]_{\RT})$.  We also give combinatorial descriptions of $\rhd_{\RT}$ and $[_{-}, _{-}]_{\RT}$.

\begin{theorem}
Let $\hrts$ be the $\epsilon$-unitary bialgebra of weight zero in Theorem~\ref{thm:rt2}.
\begin{enumerate}
\item The pair $(\hrts, \rhd_{\RT})$ is a pre-Lie algebra, where
\begin{equation*}
F_1\rhd_{\RT} F_2: =\sum_{(F_2)}F_{2(1)}F_1 F_{2(2)} \,\text{ for } \, F_1, F_2\in \hrts.
\end{equation*}

\item The pair $(\hrts, [_{-}, _{-}]_{\RT})$ is a Lie algebra, where
\begin{align*}
[F_1, F_2]_{\RT}:=F_1\rhd_{\RT} F_2-F_2\rhd_{\RT} F_1 \,\text{ for } \, F_1, F_2\in \hrts.
\end{align*}
\end{enumerate}
\label{thm:preope}
\end{theorem}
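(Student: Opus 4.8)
The plan is to read this off directly from the general construction already established, with no new computation. By Theorem~\mref{thm:rt2} the triple $(\hrts,\conc,\col)$ (forgetting the unit $\etree$) is an $\epsilon$-bialgebra of weight zero: its two defining properties — that $\col$ is a derivation of weight zero with respect to $\conc$, and that $\col$ is coassociative — are precisely Lemmas~\mref{lem:colff} and~\mref{lem:rt1}. Therefore Theorem~\mref{thm:preL}, applied with $A=\hrts$, $m=\conc$, $\Delta=\col$ and $\lambda=0$, immediately endows $\hrts$ with a pre-Lie algebra structure whose product is $F_1\rhd F_2:=\sum_{(F_2)}F_{2(1)}\,F_1\,F_{2(2)}$, where $\col(F_2)=\sum_{(F_2)}F_{2(1)}\ot F_{2(2)}$ in Sweedler notation and juxtaposition denotes the concatenation $\conc$. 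This is exactly the operation $\rhd_{\RT}$ in the statement, which proves part~(a).

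For part~(b) I would simply invoke Lemma~\mref{lem:preL}: every pre-Lie algebra becomes a Lie algebra under the commutator bracket. Hence $(\hrts,[_{-},_{-}]_{\RT})$ with $[F_1,F_2]_{\RT}=F_1\rhd_{\RT}F_2-F_2\rhd_{\RT}F_1$ is a Lie algebra, giving part~(b).

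There is no genuine obstacle here; the entire content is packaged in the two cited results, so the "hard part" is only the bookkeeping check that the hypotheses of Theorem~\mref{thm:preL} are met by $(\hrts,\conc,\col)$, which Theorem~\mref{thm:rt2} supplies. If one wanted the argument to be self-contained rather than a citation, the single point deserving a remark is that in weight zero the two $\lambda$-terms on the right-hand side of Eq.~(\mref{eq:preid}) drop out, leaving $(F_1\rhd_{\RT}F_2)\rhd_{\RT}F_3-F_1\rhd_{\RT}(F_2\rhd_{\RT}F_3)$ manifestly symmetric in $F_1$ and $F_2$, which is what yields the left pre-Lie identity Eq.~(\mref{eq:preli0}); but since Theorem~\mref{thm:preL} is proved for an arbitrary weight $\lambda$ this verification is already available and need not be repeated.
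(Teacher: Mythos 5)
Your proposal is correct and is essentially identical to the paper's own proof: part (a) is obtained by applying Theorem~\mref{thm:preL} to the weight-zero $\epsilon$-bialgebra structure furnished by Theorem~\mref{thm:rt2}, and part (b) follows from Lemma~\mref{lem:preL}. The additional remarks about dropping the unit and about the $\lambda$-terms vanishing in weight zero are accurate but not needed, since Theorem~\mref{thm:preL} is stated for arbitrary weight and does not require a unit.
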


\begin{proof}
By Theorems~\ref{thm:rt2} and~\ref{thm:preL}, $(\hrts, \rhd_{\RT})$ is a pre-Lie algebra.
The remainder follows from Lemma~\ref{lem:preL}.
\end{proof}

Combinatorial descriptions of $\rhd_{\RT}$ and $[_{-}, _{-}]_{\RT}$ on $\hrts$ can also be given.
With the notations in Lemma~\ref{lem:comid} and Theorem~\ref{thm:comb}, we have

\begin{coro}\label{coro:preLcomb}
For any $F_1, F_2\in \hrts$,
\begin{align}
F_1\rhd_{\RT} F_2=\sum_{I_{k_2}\sfid  F_2} F_{2|I_{k_2}}F_1F_{2|\overline{I}_{k_2}},
\label{eq:precom}
\end{align}
and
\begin{align}
[F_1, F_2]_{\RT}=\sum_{I_{k_2}\sfid  F_2} F_{2|I_{k_2}}F_1 F_{2|\overline{I}_{k_2}} -\sum_{I_{k_1}\sfid  F_1} F_{1|I_{k_1}}F_2 F_{1|\overline{I}_{k_1}}.
\label{eq:liecom}
\end{align}
\end{coro}
\begin{proof}
It follows directly from Theorems~\ref{thm:comb} and~\ref{thm:preope}.
\end{proof}

\begin{exam}\label{exam:preL}
Let $F_1=\tdun{$x$}$,  $F_2=\tddeux{$\alpha$}{$\beta$}$,  $F_3=\tdun{$\gamma$}\tddeux{$\omega$}{$y$}$
with $\alpha, \beta,  \gamma , \omega\in \Omega$ and $x, y\in X$.
By Lemma~\ref{lem:comid}, $F_1$ has one proper biideal $\emptyset$, $F_2$ has two proper biideals $\emptyset$ and $\{\bullet_\beta\}$ and
$F_3$ has three proper biideals $\emptyset$, $\{\bullet_\gamma\}$ and $\{\bullet_\gamma, \bullet_y\}$.
It follows from Eqs.~(\ref{eq:precom}) and~(\ref{eq:liecom}) that
\begin{align*}
F_1\rhd_{\RT} F_2&=F_{2|\emptyset}F_1 F_{2|\{\bullet_\alpha\}}+F_{2|\{\bullet_\beta\}}F_1 F_{2|\emptyset}
=\etree F_1 \,\tdun{$\alpha$} + \tdun{$\beta$} \, F_1 \etree
=\tdun{$x$}\tdun{$\alpha$}+ \tdun{$\beta$} \tdun{$x$},\\
F_2\rhd_{\RT} F_1&=\etree F_2\etree =\tddeux{$\alpha$}{$\beta$},\, \ \ \ \ \ \
F_2\rhd_{\RT} F_3=\tddeux{$\alpha$}{$\beta$} \tddeux{$\omega$}{$y$} +\tdun{$\gamma$} \tddeux{$\alpha$}{$\beta$} \tdun{$\omega$}+\tdun{$\gamma$}\tdun{$y$}\tddeux{$\alpha$}{$\beta$},\\
[F_1, F_2]_{\RT}&=\tdun{$x$}\tdun{$\alpha$}+\tdun{$\beta$}\tdun{$x$}-\tddeux{$\alpha$}{$\beta$}.
\end{align*}
Next, we check that $F_1$, $F_2$ and $F_3$ satisfy Eq.~(\ref{eq:preli0}).
By Theorem~\ref{thm:comb},
\begin{align*}
\col(F_3)=&~\col(\tdun{$\gamma$}\tddeux{$\omega$}{$y$})=\etree \ot \tddeux{$\omega$}{$y$}+\tdun{$\gamma$}\ot \tdun{$\omega$}+\tdun{$\gamma$}\tdun{$y$}\ot \etree,\\
\col(F_2\rhd_{\RT} F_3)=&~\etree\ot \tdun{$\alpha$}\tddeux{$\omega$}{$y$}
+\tdun{$\beta$}\ot \tddeux{$\omega$}{$y$}+\tddeux{$\alpha$}{$\beta$}\ot \tdun{$\omega$}
+\tddeux{$\alpha$}{$\beta$}\tdun{$y$}\ot \etree+\etree \ot\tddeux{$\alpha$}{$\beta$}\tdun{$\omega$}
+\tdun{$\gamma$}\ot \tdun{$\alpha$}\tdun{$\omega$}\\
&+\tdun{$\gamma$}\tdun{$\beta$}\ot \tdun{$\omega$} + \tdun{$\gamma$}\tddeux{$\alpha$}{$\beta$}\ot \etree
+\etree \ot \tdun{$y$}\tddeux{$\alpha$}{$\beta$}+\tdun{$\gamma$}\ot \tddeux{$\alpha$}{$\beta$}
+\tdun{$\gamma$}\tdun{$y$}\ot \tdun{$\alpha$}+\tdun{$\gamma$}\tdun{$y$}\tdun{$\beta$}\ot \etree.
\end{align*}
Applying Theorem~\ref{thm:preope},
\begin{align*}
(F_1\rhd_{\RT} F_2)\rhd_{\RT} F_3=&\tdun{$x$}\tdun{$\alpha$}\tddeux{$\omega$}{$y$}
+\tdun{$\beta$}\tdun{$x$}\tddeux{$\omega$}{$y$}+\tdun{$\gamma$}\tdun{$x$}\tdun{$\alpha$}\tdun{$\omega$}
+\tdun{$\gamma$}\tdun{$\beta$}\tdun{$x$}\tdun{$\omega$}+
\tdun{$\gamma$}\tdun{$y$}\tdun{$x$}\tdun{$\alpha$}+\tdun{$\gamma$}\tdun{$y$}\tdun{$\beta$}\tdun{$x$},\\
F_1\rhd_{\RT} (F_2\rhd_{\RT} F_3)=&\tdun{$x$}\tdun{$\alpha$}\tddeux{$\omega$}{$y$}
+\tdun{$\beta$}\tdun{$x$}\tddeux{$\omega$}{$y$}+\tddeux{$\alpha$}{$\beta$}\tdun{$x$}\tdun{$\omega$}
+\tddeux{$\alpha$}{$\beta$}\tdun{$y$}\tdun{$x$}+\tdun{$x$}\tddeux{$\alpha$}{$\beta$}\tdun{$\omega$}
+\tdun{$\gamma$}\tdun{$x$}\tdun{$\alpha$}\tdun{$\omega$}
+\tdun{$\gamma$}\tdun{$\beta$}\tdun{$x$}\tdun{$\omega$}\\
&+\tdun{$\gamma$}\tddeux{$\alpha$}{$\beta$}\tdun{$x$}
+\tdun{$x$}\tdun{$y$}\tddeux{$\alpha$}{$\beta$}+\tdun{$\gamma$}\tdun{$x$}\tddeux{$\alpha$}{$\beta$}
+\tdun{$\gamma$}\tdun{$y$}\tdun{$x$}\tdun{$\alpha$}+\tdun{$\gamma$}\tdun{$y$}\tdun{$\beta$}\tdun{$x$}.
\end{align*}
Thus
\begin{align*}
F_1\rhd_{\RT} (F_2\rhd_{\RT} F_3)-(F_1\rhd_{\RT} F_2)\rhd_{\RT} F_3
=\ &(\tddeux{$\alpha$}{$\beta$}\tdun{$x$}\tdun{$\omega$}+\tdun{$x$}\tddeux{$\alpha$}{$\beta$}\tdun{$\omega$})
+(\tddeux{$\alpha$}{$\beta$}\tdun{$y$}\tdun{$x$}+\tdun{$x$}\tdun{$y$}\tddeux{$\alpha$}{$\beta$})\\
&\ +(\tdun{$\gamma$}\tddeux{$\alpha$}{$\beta$}\tdun{$x$}+\tdun{$\gamma$}\tdun{$x$}\tddeux{$\alpha$}{$\beta$}),
\end{align*}
which is symmetric in $F_1=\tdun{$x$}$ and $F_2=\tddeux{$\alpha$}{$\beta$}$ and hence
\begin{align*}
(F_1\rhd_{\RT} F_2)\rhd_{\RT} F_3-F_1\rhd_{\RT} (F_2\rhd_{\RT} F_3)=(F_2\rhd_{\RT} F_1)\rhd_{\RT} F_3-F_2\rhd_{\RT} (F_1\rhd_{\RT} F_3).
\end{align*}
\end{exam}
\begin{remark}

Let us emphasize that our pre-Lie algebra $(\hrts, \rhd_{\RT})$ is different from the one
introduced by Chapoton and Livernet~\cite{CL01} from the viewpoint of operad.
Under the framework of~\cite{CL01}, Matt gave an exmple~\cite{Mat10}:
\begin{align*}
\tdun{$\beta$}\rhd \tddeux{$\alpha$}{$\beta$}=2\ \tdtroisun{$\alpha$}{$\beta$}{$\beta$}+\tdtroisdeux{$\alpha$}{$\beta$}{$\beta$},
\end{align*}
which is different from $\tdun{$\beta$}\rhd_{\RT} \tddeux{$\alpha$}{$\beta$}$= \tdun{$\beta$}\tdun{$\alpha$}+\tdun{$\beta$}\tdun{$\beta$}
obtained in Example~\ref{exam:preL} by replacing $F_1$ by $\tdun{$\beta$}$.
\end{remark}

\smallskip

\noindent {\bf Acknowledgments}:
We thank Prof. Foissy for suggestions that have been very helpful to improve the quality of this paper.
Yi Zhang was supported by China Scholarship Council to visit Department of Mathematics at University of Southern California and he would like to thank Prof. Susan Montgomery for hospitality during his visit.
This work was supported by the National Natural Science Foundation
of China (Grant No.\@ 11771191 and 11501267).

\end{document}